\documentclass[a4paper,11pt,DIV=12,
abstract=on
]{scrartcl}
\usepackage{mathtools}
\mathtoolsset{showonlyrefs}
\usepackage{amsmath, amsthm, amssymb}
\usepackage{fontenc}
\usepackage[utf8]{inputenc}
\usepackage{graphicx}
\usepackage{subcaption,tikz,pgfplots,ifsym}
\DeclareCaptionLabelSeparator{periodspace}{.\ }
\captionsetup{format=hang,labelsep=periodspace,indention=-2em,labelfont=bf,width=.95\textwidth,skip=.5\baselineskip}
\captionsetup[subfigure]{aboveskip=.5ex,belowskip=1.5ex,indention=0cm,labelformat=simple,labelsep=space, labelfont=normal, hypcap=true,width=.975\textwidth,skip=.5\baselineskip}

\definecolor{rbcolor}{rgb}{0.063,0.278,0.4}
\usepackage[backgroundcolor=rbcolor!25!white, bordercolor=rbcolor!75, linecolor=rbcolor!75,textsize=small]{todonotes}
\usepackage{multirow}
\usepackage{algorithm, booktabs,multirow}
\usepackage[noend]{algpseudocode}

\usepackage{color}

\setkomafont{sectioning}{\rmfamily\bfseries}
\setkomafont{title}{\rmfamily}
\parindent0mm
\parskip0mm
\usepackage{array}
\usepackage{enumitem,listings}
\lstset{language=Matlab,basicstyle=\ttfamily,keywords=\ttfamily}
\setlist{leftmargin=0pt,labelwidth=*,
,align=left%
,itemsep=0pt}
\usepackage[unicode]{hyperref}

\tikzstyle{point}=[inner sep=.2ex,fill=black,shape=circle] 
\tikzstyle{pointc}=[inner sep=.2ex,fill=blue,shape=circle] 
\tikzstyle{pointb}=[inner sep=.2ex,fill=blue!50!black,shape=circle] 
\tikzstyle{helping line}=[thin, black!50] 
\tikzstyle{diam}=[thin, black!50,dashed] 
\tikzstyle{ticklabel}=[right=-1pt,font=\tiny]
\tikzstyle{axis}=[very thin, font=\footnotesize]
\tikzstyle{ticks}=[very thin, font=\footnotesize]

\newlength{\scalepi}
\setlength{\scalepi}{3.1415cm}

\newtheorem{theorem}{Theorem}[section]

\newtheorem{proposition}[theorem]{Proposition}
\newtheorem{lemma}[theorem]{Lemma}
\newtheorem{remark}[theorem]{Remark}
\newtheorem{example}[theorem]{Example}
\newtheorem{corollary}[theorem]{Corollary}

\newcommand{\I}{\mathbb{I}}

\newcommand{\R}{\mathbb{R}}

\newcommand{\Z}{\mathbb{Z}}

\renewcommand{\atop}[2]{\genfrac{}{}{0pt}{}{#1}{#2}}

\DeclareFontFamily{U}{mathx}{\hyphenchar\font45}
\DeclareFontShape{U}{mathx}{m}{n}{
      <5> <6> <7> <8> <9> <10>
      <10.95> <12> <14.4> <17.28> <20.74> <24.88>
      mathx10
      }{}
\DeclareSymbolFont{mathx}{U}{mathx}{m}{n}
\DeclareMathSymbol{\bigtimes}{1}{mathx}{"91}
\newcommand{\tT}{\mathrm{T}}
\DeclareMathOperator*{\argmin}{arg\,min} 
\DeclareMathOperator{\prox}{prox} 

\DeclareMathOperator{\sgn}{sgn}

\newcommand{\abs}[1]{\left\lvert #1 \right\rvert}
\newcommand{\norm}[2]{\left\lVert #1 \right\rVert_{#2}}

\hypersetup{pdfauthor={Ronny Bergmann, Friederike Laus, Gabriele Steidl, and Andreas Weinmann},
 pdftitle={Second Order Differences of Cyclic Data and Applications in Variational Denoising},%
  pdfsubject={draft},%
  pdfcreator = {pdflatex and TextMate},%
  pdfkeywords={},
  plainpages=false, pdfstartview=FitH, pdfview=FitH, pdfpagemode=UseOutlines,%
  bookmarksnumbered=true,bookmarksopen=false,bookmarksopenlevel=0,%
  colorlinks=true,linkcolor=black,citecolor=black,urlcolor=black%
}
\title{Second Order Differences of Cyclic Data\\ and Applications in Variational Denoising}

\author{%
Ronny Bergmann\thanks{Fachbereich für Mathematik, Technische Universität %
Kaiserslautern, Paul-Ehrlich-Str. 31, 67663 Kaiserslautern, Germany, %
$\{$bergmann, steidl, friederike.laus$\}$@mathematik.uni-kl.de.}, %
Friederike Laus\footnotemark[1], %
Gabriele Steidl\footnotemark[1], %
Andreas Weinmann\thanks{%
Department of Mathematics, Technische Universität München
and
Fast Algorithms for Biomedical Imaging Group, Helmholtz-Zentrum %
München, Ingolstädter Landstr. 1, 85764 Neuherberg, Germany, %
andreas.weinmann@helmholtz-muenchen.de.}
}
 
\date{\today}

\begin{document}

\maketitle
\begin{abstract}
	\parindent0mm
	\parskip0mm
	\noindent
In many image and signal processing applications, as interferometric synthetic
aperture radar (SAR) or color image restoration in HSV or LCh spaces the data has
its range on the one-dimensional sphere $\mathbb S^1$.
Although the minimization of total variation (TV) regularized functionals is
among the most popular methods for edge-preserving image restoration such
methods were only very recently applied to cyclic structures.
However, as for Euclidean data, TV regularized variational methods
suffer from the so called staircasing effect. This effect can be avoided
by involving higher order derivatives into the functional.

This is the first paper which uses higher order differences of cyclic data
in regularization terms of energy functionals for image restoration.
We introduce absolute higher order differences for
$\mathbb S^1$-valued data in a sound way which
is independent of the chosen representation system on the circle.
Our absolute cyclic first order difference is just the
geodesic distance between points. Similar to the geodesic distances the
absolute cyclic second order differences have only values in $[0,\pi]$.
We update the cyclic variational TV approach by our new cyclic second order
differences. To minimize the corresponding functional we apply a cyclic
proximal point method which was recently successfully proposed for Hadamard
manifolds. Choosing appropriate cycles this algorithm can be implemented in an
efficient way. The main steps require the evaluation of proximal mappings of
our cyclic differences for which we provide analytical expressions.
Under certain conditions we prove the convergence of our algorithm.
Various numerical examples with artificial as well as real-world data demonstrate the
advantageous performance of our algorithm.
\end{abstract}
%
%
\section{Introduction} \label{sec:intro}
A frequently used method for edge-preserving image denoising is the variational
approach which minimizes the Rudin-Osher-Fatemi (ROF) functional~\cite{ROF92}.
In a discrete (penalized) form the ROF functional can be written as
\[
\sum_{i,j} ( f_{i,j} - x_{i,j} )^2
	+ \lambda \sum_{i,j}  \lvert\nabla x_{i,j}\rvert%
	, \quad \lambda > 0,
\]
where $f \in \mathbb R^{N,M}$ is the given corrupted image and $\nabla$ denotes
the discrete gradient operator which contains usually first order forward
differences in vertical and horizontal directions. The regularizing term
\(\sum_{i,j}  \lvert\nabla x_{i,j}\rvert\) can be considered as discrete version
of the total variation (TV) functional. Since the gradient does not penalize
constant areas the minimizer of the ROF functional tends to have such regions,
an effect known as staircasing. An approach to avoid this effect consists in
the employment of higher order differences/derivatives. Since the pioneering
work~\cite{CL97} which couples the TV term with higher order terms by infimal
convolution various techniques with higher order  differences/derivatives were
proposed in the literature, among
them~\cite{BKP09,CEP07,CMM00,DSS09,DWB09,HS06,LBU2012,LLT03,LT06,Sche98,SS08,SST11}.

In various applications in image processing and computer vision the functions
of interest take values on the circle $\mathbb S^1$ or another manifold.
Processing manifold-valued data has gained a lot of interest in recent years.
Examples are wavelet-type multiscale transforms for manifold data~
\cite{GW09,RDSDS05,Wein12} and manifold-valued partial differential equations~
\cite{CTDF04,GHS13}. Finally we like to mention statistical issues on
Riemannian manifolds~\cite{Fle13,FJ07,Pen06} and in particular the statistics
of circular data~\cite{fisher95,JS2001}.
The TV notation for functions with values on a manifold has been studied
in~\cite{GM06,GM07} using the theory of Cartesian currents. These papers were
an extension of the previous work~\cite{GMS93} were the authors focus
on $\mathbb S^1$-valued functions and show in particular the existence of
minimizers of certain energies in the space of functions with bounded total
cyclic variation. The first work which applies a cyclic TV approach among other
models for imaging tasks was recently published by Cremers and Strekalovskiy
in~\cite{SC11,CS13}. The authors unwrapped the function values to the real axis
and proposed an algorithmic solution to account for the periodicity. An
algorithm which solves TV regularized minimization problems on Riemannian
manifolds was proposed by Lellmann et al. in~\cite{LSKC13}. They reformulate
the problem as a multilabel optimization problem with an infinite number of
labels and approximate the resulting hard optimization problem using convex
relaxation techniques. The algorithm was applied for chromaticity-brightness
denoising, denoising of rotation data and processing of normal fields for
visualization. Another approach to TV minimization for manifold-valued data via
cyclic and parallel proximal point algorithms was proposed by one of the authors
and his colleagues in~\cite{WDS2013}. It does not require any labeling or
relaxation techniques.
The authors apply their algorithm in particular for diffusion tensor imaging
and interferometric SAR imaging. For Cartan-Hadamard manifolds convergence of
the algorithm was shown based on a recent result of 
Ba{\v{c}}{\'a}k~\cite{Bac13a}. Unfortunately, one of the
simplest manifolds that is not of Cartan-Hadamard type is the
circle~\(\mathbb S^1\).

In this paper we deal with the incorporation of higher order differences into
the energy functionals to improve denoising results for $\mathbb S^1$-valued
data.
Note that the (second-order) total generalized variation was
generalized for tensor fields in~\cite{VBK13}.
However, to the best of our knowledge this is the first paper which defines
second order differences of cyclic data and uses them in regularization terms
of energy functionals for image restoration.
We focus on a discrete setting. First we  provide a
meaningful definition of higher order differences for cyclic data which we call
\textit{absolute cyclic differences}.\enlargethispage{\baselineskip}
In particular our absolute cyclic first  order differences resemble the
geodesic distance (arc length distance) on the circle. As the geodesics the
absolute cyclic second order differences take only values in $[0,\pi]$.
This is not necessary the case for differences of order larger than two.
Following the idea in~\cite{WDS2013} we suggest a cyclic proximal point
algorithm to minimize the resulting functionals. This algorithm requires the
evaluation of certain proximal mappings. We provide analytical expression for
these mappings. Further, we suggest an appropriate choice of the cycles such
that the whole algorithm becomes very efficient. We apply our algorithm to
artificial data as well as to real-world interferometric SAR data.

The paper is organized as follows: in Section~\ref{sec:diff} we propose a
definition of differences on $\mathbb S^1$. Then, in Section~\ref{sec:prox}, we
provide analytical expressions for the proximal mappings required in our cyclic
proximal point algorithm. The approach is based on unwrapping the circle
to~\(\R\) and considering the corresponding proximal mappings
on the Euclidean space. The cyclic proximal point algorithm is presented in
Section~\ref{sec:cpp}. In particular we describe a vectorization strategy which
makes the Matlab implementation efficient and provides parallelizability,
and prove its convergence under certain assumptions.
Section~\ref{sec:numerics} demonstrates the
advantageous performance of our algorithm by numerical examples. Finally,
conclusions and directions of future work are given in
Section~\ref{sec:conclusions}.
%
\section{Differences of \texorpdfstring{$\mathbb S^1$}{𝕊¹}--valued data} \label{sec:diff}
%
Let $\mathbb S^1$ be the unit circle in the plane
\[
\mathbb S^1 := \{p_1^2 + p_2^2 = 1: p = (p_1,p_2)^\tT \in \R^2\}
\]
endowed with the {\itshape geodesic distance} (arc length distance)
\[
d_{\mathbb S^1} (p,q) = \arccos( \langle p,q \rangle ).
\]
Given a base point $q \in \mathbb S^1$, the {\itshape exponential map}~
$\exp_q: \R \rightarrow \mathbb S^1$ from the tangent
space~$T_q\mathbb S^1 \simeq \R$ of $\mathbb S^1$ at $q$ onto $\mathbb S^1$ is
defined by
\[
\exp_q(x) = R_x q, \qquad R_x :=
\begin{pmatrix}
\cos x & -\sin x\\
\sin x & \cos x
\end{pmatrix}.
\]
This map is $2\pi$-periodic, i.e.,
$\exp_q(x) = \exp_q((x)_{2\pi})$ for any $x \in \R$,
where
$(x)_{2\pi}$ denotes the unique point in $[-\pi,\pi)$ such that
$x = 2\pi k + (x)_{2\pi}$, $k \in \mathbb Z$.
Some useful properties of the mapping~
$(\cdot)_{2\pi}: \R \rightarrow [-\pi,\pi)$ (which can also be considered as
mapping from $\R$ onto $\R / 2\pi \Z$) are collected in the following remark.
%
\begin{remark} \label{lem:mod_pi}
The following relations hold true:
\begin{enumerate}
\item
\(\big( (x)_{2\pi} \pm (y)_{2\pi} \big)_{2\pi} = (x \pm y)_{2\pi} \qquad \)
for all \(x, y \in \R\).
\item
If
$
z = (x-y)_{2\pi}
$ then $
x = (z+y)_{2\pi}
\qquad
$ for all $
x \in [-\pi,\pi)
,\
y \in \R
$.
\end{enumerate}
While i) follows by straightforward computation relation ii) can be seen as
follows:
For $z = (x-y)_{2\pi}$ there exists $k\in \Z$ such that
		\begin{equation*}
		 x-y = (x-y)_{2\pi} +2\pi k = z + 2\pi k.
		\end{equation*}
		Hence it follows $x = z+y +2\pi k$ and since $x\in [-\pi,\pi)$ further
		\begin{equation*}
		 x = (x)_{2\pi} = (z+y +2\pi k)_{2\pi} = (z+y)_{2\pi}.\qedhere
		\end{equation*}
\end{remark}
To guarantee the injectivity of the exponential map, we restrict its domain of
definition from $\R$ to $[-\pi,\pi)$. Thus, for~$p,q \in \mathbb S^1$, there is
now a unique $x \in [-\pi,\pi)$ satisfying~$\exp_q(x) = p$. In particular we
have $\exp_q(0) = q$. Given such representation system~$x_j \in [-\pi,\pi)$ of
$p_j \in \mathbb S^1$, $ j = 1,2$ centered at an arbitrary point~$q$
on~$\mathbb S^1$ the geodesic distance becomes
\begin{equation} \label{d}
	d_{\mathbb S^1}(p_1,p_2) = d(x_1,x_2)
	= \min_{k \in \Z }\lvert x_2 - x_1 + 2\pi k\rvert
	= \lvert(x_2 - x_1)_{2\pi}\rvert.
\end{equation}
Actually we need only $k \in \{0,\pm 1\}$ in the minimum.
Clearly, this definition does not depend on the chosen center point $q$.

We want to determine general finite differences of $\mathbb S^1$-valued data.
Let $w = (w_j)_{j=1}^d \in \R^d \backslash \{ 0 \}$	with
\begin{equation} \label{moments}
\langle w ,1_d \rangle = \sum_{j=1}^d w_j = 0,
\end{equation}
where $1_d$ denotes the vector with $n$ components one.
We define the {\itshape finite difference operator} $\Delta(\cdot;w): \R^d \rightarrow \mathbb R$
by
\[
\Delta(x;w) := \langle x,w \rangle  \quad \mbox{for all} \; x \in \R^d.
\]
By~\eqref{moments}, we see that $\Delta(\cdot;w)$ vanishes for constant vectors
and is therefore translation invariant, i.e.,
\begin{equation} \label{ti}
\Delta(x + \alpha 1_d;w) = \Delta(x;w) \quad \mbox{for all} \; \alpha \in \R.
\end{equation}
\begin{example}\label{diff_n}
For the binomial coefficients with alternating signs
\[w = b_n := \left( (-1)^{j+n-1} { n\choose {j-1}} \right)_{j=1}^{n+1}\]
we obtain
the (forward) differences of order $n$:
\[
	\Delta(x;w)
	= \Delta_n(x) = \langle x,b_n \rangle
	= \sum_{j=1}^{n+1} (-1)^{j+n-1} { n\choose {j-1}} x_j.
\]
Note that $\Delta_n$ does not only fulfill~\eqref{moments}, but vanishes
exactly for all `discrete polynomials of order $n-1$', i.e., for all vectors
from $\operatorname{span}\{ (j^r)_{j=0}^n:r=0,\ldots,n-1\}$. Here we are
interested in first and second order differences
\begin{align*}
\Delta_1 (x_1,x_2) &= \Delta(x;b_1) = x_2 - x_1,\\
\Delta_2 (x_1,x_2,x_3) &=  \Delta(x;b_2) = x_1 - 2 x_2 + x_3.
\end{align*}
Moreover, we will apply the `mixed second order' difference with
$w = b_{1,1} := (-1,1,1,-1)^\tT$ and use the notation
\[
\Delta_{1,1} (x_1,x_2,x_3,x_4) = \Delta(x;b_{1,1})  = -x_1 + x_2 + x_3 - x_4.
\]
\end{example}
%
We want to define differences for points
$(p_j)_{j=1}^d \in (\mathbb S^1)^d$ using their representation
$x := (x_j)_{j=1}^d \in [-\pi,\pi)^d$  with respect to an arbitrary fixed
center point. As the geodesic distance~\eqref{d} these differences should be
independent of the choice of the center point.  This can be achieved if and
only if the differences are shift invariant modulo $2\pi$.  Let
$\I_d := \{1,\ldots,d\}$.
We define the {\itshape absolute cyclic difference}
of $x \in [-\pi,\pi)^d$ (resp. $(p_j)_{j=1}^d\in (\mathbb S^1)^d$)
with respect to $w$ by
\begin{equation} \label{def_cyc_diff}
	d(x;w) 
	:= \min_{\alpha \in \R}
		\bigl\lvert \Delta \big( [x + \alpha 1_d]_{2 \pi}; w  \big) \bigr\rvert
	= \min_{j \in \I_d}
		 \bigl\lvert \Delta \big( [x - (x_j+\pi) 1_d]_{2 \pi}  ; w  \big) \bigr\rvert,
\end{equation}
where $[x]_{2\pi}$ denotes the component-by-component application of
$(t)_{2\pi}$ if $t \not = (2k +1) \pi $, $k \in \Z$ and 
$[(2k +1) \pi]_{2\pi} = \pm \pi$, $k \in \Z$. The definition allows that points
having the same value are treated separately,
cf.~Figure~\ref{fig:choosing_sides}. This ensures that
$d(\cdot;w): (\mathbb S^1)^d \rightarrow \R$ is a continuous map. For example we
have $d\bigl( (-\pi,0,-\pi)^\tT;b_2 \bigr) = 0$.
Figures~\ref{fig:def-d}
and~\ref{fig:choosing_sides} illustrate definition~\eqref{def_cyc_diff}.
For the absolute cyclic differences related to the
differences in Example~\ref{diff_n} we will use the simpler notation
\begin{equation} \label{d2}
d_n(x) := d(x; b_n) \quad \text{and} \quad d_{1,1}(x) := d(x; b_{1,1}).
\end{equation}
\begin{figure}[tbp]\centering
	\begin{subfigure}{.8\textwidth}\centering
		\includegraphics{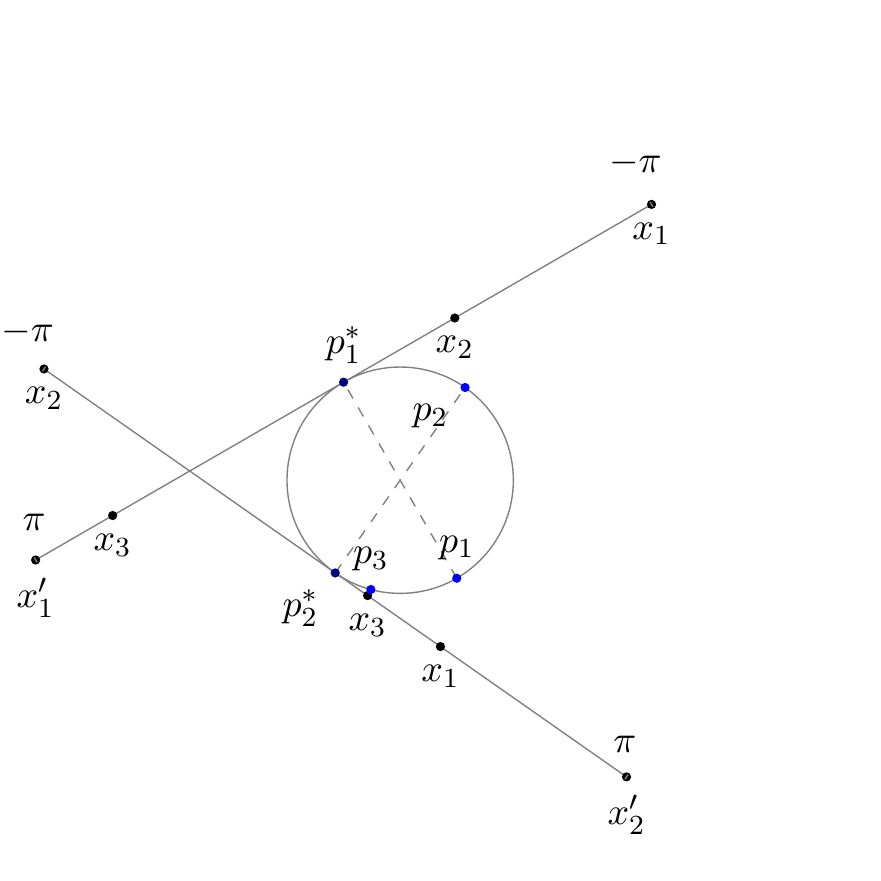}
		\vspace{-1.1\baselineskip}
		\caption{\({\exp}_{p_1^*}\) (top) and \({\exp}_{p_2^*}\) (bottom).}
		\label{subfig:p1}
	\end{subfigure}
	\begin{subfigure}{.8\textwidth}\centering
		\vspace{-\baselineskip}
		\includegraphics{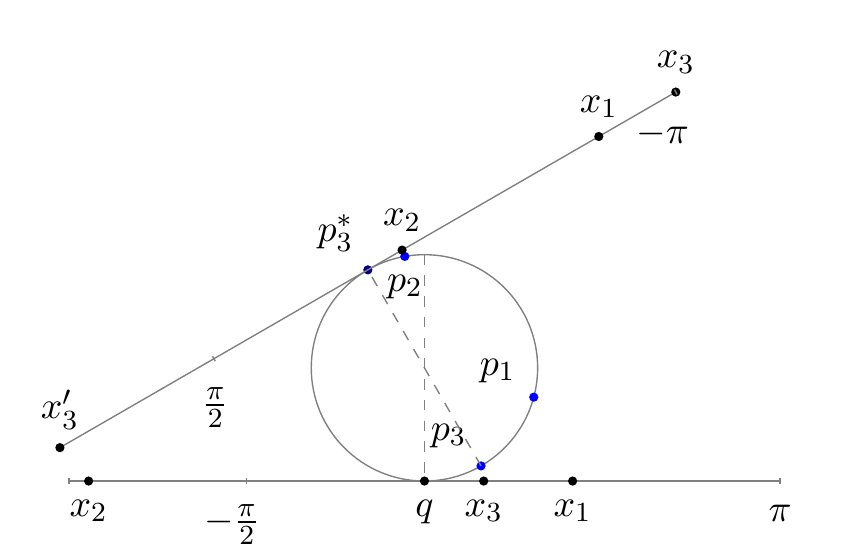}
		\vspace{-.2\baselineskip}
		\caption{\({\exp}_{p_3^*}\) (top) and \({\exp}_{q}\) (bottom).}
		\label{subfig:p3}
	\end{subfigure}
	\begin{subfigure}{.8\textwidth}\centering
		\includegraphics{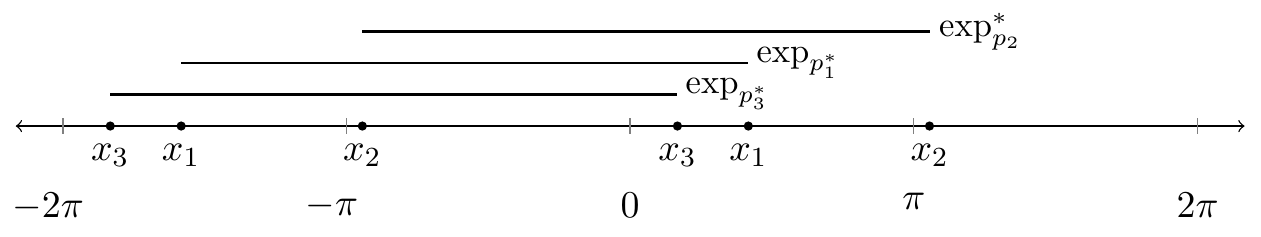}
		\vspace{-1.3\baselineskip}
		\caption{Settings from the tangential maps of \(p_j^*\), \(j=1,2,3\), on \(\mathbb R\) using the representation system according to~\(\exp_q\).}
		\label{subfig:forpartsonR}
	\end{subfigure}
	\vspace{-.5\baselineskip}
	\caption{Three points \(p_j\), \(j=1,2,3\), on the circle (blue) and their
	inverse exponential maps at \(p_j^*\), \(j=1,2,3\), (dark blue),
	where~$p_j^*$ denotes the antipodal point of $p_j$. In other words, we cut
	the circle at the point \(p_j\) and unwind it with respect to the tangent
	line at the antipodal point \(p_j^*\). The absolute cyclic differences take
	the three pairwise different positions of the points $x_j$, \(j=1,2,3\) to
	each other into account. These are shown in\subref{subfig:forpartsonR} with
	respect to the representation system from the arbitrary point~\(q\)
	in\subref{subfig:p3}.}
	\label{fig:def-d}
\end{figure}

\begin{figure}[tbp]\centering
		\includegraphics{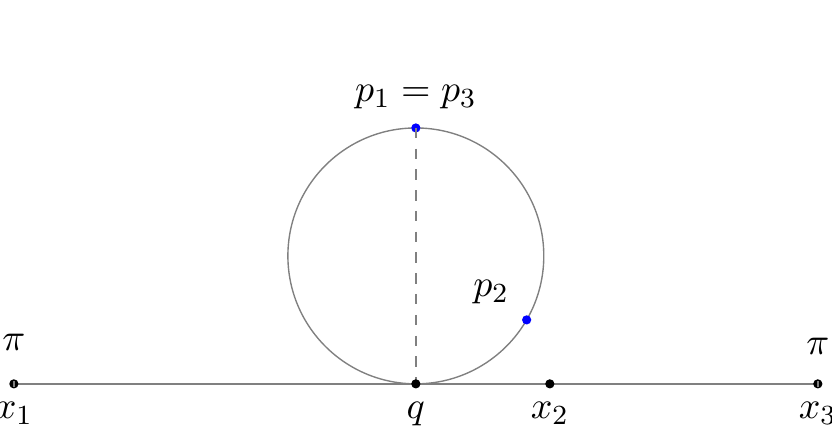}
	\caption{Three points  \(p_j\), $j=1,2,3$ on the circle, where \(p_1=p_3\)
	and ${\exp}_{q}$, \(q=p_1^*\).
	Though \(p_1,p_3\) denote the same point on the circle they are treated
	separately in the definition of the absolute cyclic differences.}
	\label{fig:choosing_sides}
\end{figure}

The following equivalent definition of absolute cyclic differences appears to
be useful.
\begin{lemma} \label{cyc_diff_perm}
Let $x \in [-\pi,\pi)^d$ be sorted in ascending order as
$-\pi \le x_{j_1} \le \ldots \le x_{j_d} < \pi$
and set
$x^1 := (x_{j_i})_{i=1}^d$.
Let $P$ denote the corresponding permutation matrix,
i.e.,
$
P x = x^1
$
and
$
x = P^\tT x^1.
$
Consider the $2\pi$ shifted versions of $x^1$ given by
\[
x^k = x^1 + 2\pi \sum_{j=1}^{k-1} e_j
\quad
k = 2,\ldots,d,
\]
where $e_j \in \R^d$ denotes the $j$-th unit vector.
Then it holds
\begin{equation}\label{dw}
	d(x;w)
	= \min_{k \in \I_d} \bigl\lvert\Delta(P^\tT x^k;w)\bigr\rvert
	= \min_{k \in \I_n} \Bigl\lvert\Delta(x^k;w)
		+ 2\pi\Bigl\langle \sum_{j=1}^{k-1} e_j, P w \Bigr\rangle\Bigr\rvert.
\end{equation}
\end{lemma}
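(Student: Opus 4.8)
The plan is to derive the first equality in \eqref{dw} directly from the second form of the definition \eqref{def_cyc_diff}, namely $d(x;w)=\min_{j\in\I_d}\bigl\lvert\Delta\bigl([x-(x_j+\pi)1_d]_{2\pi};w\bigr)\bigr\rvert$, and to obtain the second equality by bookkeeping with the permutation. The key observation is that, since $x^1=Px$ and $P^\tT e_l=e_{j_l}$, the vector $P^\tT x^k$ is just $x$ with its $k-1$ smallest entries raised by $2\pi$,
\[
P^\tT x^k = x + 2\pi\sum_{l=1}^{k-1} e_{j_l}.
\]
Writing $j=j_k$ for the index carrying the $k$-th smallest value, I would establish the identity
\[
\bigl[x-(x_j+\pi)1_d\bigr]_{2\pi} = P^\tT x^k-(x_j+\pi)1_d ,
\]
so that the translation invariance \eqref{ti} of $\Delta(\cdot;w)$ gives $\bigl\lvert\Delta\bigl([x-(x_j+\pi)1_d]_{2\pi};w\bigr)\bigr\rvert=\bigl\lvert\Delta(P^\tT x^k;w)\bigr\rvert$. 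Taking the minimum over $j$ (which equals $d(x;w)$) and using that $j\mapsto k$ runs over all of $\I_d$ then yields the first equality in \eqref{dw}.

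To prove the displayed identity I would argue coordinatewise. Fix a coordinate $i$ and look at $x_i-x_j-\pi$. If $x_i<x_j$, then $x_i-x_j\in(-2\pi,0)$, hence $x_i-x_j-\pi\in(-3\pi,-\pi)$ and $(x_i-x_j-\pi)_{2\pi}=x_i-x_j+\pi$, i.e.\ a wrap by $+2\pi$ occurs; if $x_i>x_j$, then $x_i-x_j\in(0,2\pi)$, so $x_i-x_j-\pi\in(-\pi,\pi)$ and no wrap occurs; and if $x_i=x_j$ with $i\neq j$, then $x_i-x_j-\pi=-\pi$ and the convention $[(2k+1)\pi]_{2\pi}=\pm\pi$ lets coordinate $i$ land on either side, which is exactly the ``treated separately'' clause of \eqref{def_cyc_diff}. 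Thus the set of coordinates receiving the $+2\pi$ wrap is precisely $\{j_1,\dots,j_{k-1}\}$ for the chosen tie-breaking order, and this is what $P^\tT x^k-(x_j+\pi)1_d = x-(x_j+\pi)1_d+2\pi\sum_{l=1}^{k-1}e_{j_l}$ records.

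The second equality in \eqref{dw} is then a one-line consequence of linearity of $\Delta(\cdot;w)$ together with $\langle P^\tT v,w\rangle=\langle v,Pw\rangle$: substituting $x^k=x^1+2\pi\sum_{l=1}^{k-1}e_l$ splits $\Delta(P^\tT x^k;w)=\langle x^k,Pw\rangle$ into the contribution of $x$ plus the correction $2\pi\langle\sum_{l=1}^{k-1}e_l,Pw\rangle$. The one genuinely delicate point in the argument is the ties: I expect the main work to be checking that the $d$ vectors $x^1,\dots,x^d$ obtained by shifting one extra block by $2\pi$ at a time realise exactly the candidates admitted by the $\pm\pi$ convention --- no fewer and no spurious extra ones --- and, conversely, that every admissible centre $\alpha=-(x_j+\pi)$ together with a consistent sign choice at coinciding points is one of these $x^k$. (Alternatively one could start from the $\min_{\alpha\in\R}$ form in \eqref{def_cyc_diff} and note that $\alpha\mapsto\Delta([x+\alpha 1_d]_{2\pi};w)$ is constant between the $d$ critical values $\alpha=\pi-x_i$ at which a single coordinate wraps, which again leaves $d$ candidate values and the same tie bookkeeping.) Once this is pinned down, everything else reduces to the coordinatewise case distinction above and \eqref{ti}.
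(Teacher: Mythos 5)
Your argument follows essentially the same route as the paper's proof: the first equality in \eqref{dw} is read off from definition \eqref{def_cyc_diff} (the paper simply asserts this step, while you supply the coordinatewise wrapping analysis, the identification $P^\tT x^k = x + 2\pi\sum_{l=1}^{k-1}e_{j_l}$, and the use of translation invariance \eqref{ti}), and the second equality is exactly the paper's linearity computation \eqref{helpful}, namely $\langle P^\tT x^k,w\rangle = \langle x,w\rangle + 2\pi\bigl\langle \sum_{j=1}^{k-1}e_j, Pw\bigr\rangle$. Your explicit attention to the tie-breaking convention at coinciding points is in fact more care than the paper's own proof takes, so nothing further is required.
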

\begin{proof}
The first equality in~\eqref{dw} follows directly by
definition~\eqref{def_cyc_diff}. To see the second one, note that by linearity
of the inner product we have
\begin{equation}\label{helpful}
	\langle P^\tT x^k,w \rangle
	= \langle P^\tT x^1, w \rangle
		+ 2\pi\Bigl\langle\sum_{j=1}^{k-1} e_j, P w \Bigr\rangle
	= \langle x, w \rangle + 2\pi\Bigl\langle \sum_{j=1}^{k-1} e_j, P w \Bigr\rangle.
\end{equation}
\end{proof}
For the geodesic distance we obtain by~\eqref{d} that~$d_1(x)
= \big\lvert \big( \Delta_1(x))_{2\pi} \big) \big\rvert$. In general the relation
\begin{equation} \label{important_rel}
	d(x;w)
	= \lvert(\langle x,w \rangle)_{2\pi}\rvert
	\qquad\text{ for all } x \in [-\pi,\pi)^d
	\end{equation}
does not hold true as the following example shows.
%
\begin{example} \label{conterex}
In general the $n$-th order absolute cyclic difference cannot be written as
$
d_n(x) = \lvert(\langle x,b_n \rangle)_{2\pi}\rvert
	= \lvert(\Delta_n(x))_{2\pi})\rvert.
$
Consider for example the absolute cyclic third order difference
for $x := \frac{\pi}{16}(-15,- 13, 12, 14)^\tT$
given by~\eqref{dw} as
\[
	d_3(x_1,x_2,x_3,x_4)
	= \min_{k=1,2,3,4} \Delta_3(x^k),
	\quad \Delta_3(x) = - x_1 + 3x_2 - 3x_3 + x_4.
\]
We obtain
\[
\Delta_3(x^1) = \Delta_3(x) = \frac{-46 \pi}{16}, \;
\Delta_3(x^2) = \Delta_3(x^4) = \frac{-78 \pi}{16} ,\;
\Delta_3(x^3) = \frac{18 \pi}{16},
\]
so that $d_3(x) = \frac{18 \pi}{16} > \pi$.
\end{example}
For $w \in \{b_2,b_{1,1} \}$  relation ~\eqref{important_rel} holds true by the
next lemma.
\begin{proposition} \label{diff_sec}
For $w \in \{b_2,b_{1,1} \}$ the following relation holds true:
\begin{equation} \label{sec_diff}
	d(x;w) 
	= \min_{k \in \Z} \lvert\Delta(x;w) + 2\pi k\rvert
	= \bigl\lvert \bigl( \Delta(x;w) \bigr)_{2\pi}\bigr\rvert.
\end{equation}
\end{proposition}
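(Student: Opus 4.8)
The plan is to route everything through Lemma~\ref{cyc_diff_perm}. Put $D:=\Delta(x;w)$ and, for a fixed ascending reordering of the coordinates with permutation matrix $P$, $m_k:=\bigl\langle\sum_{j=1}^{k-1}e_j,Pw\bigr\rangle=\sum_{i=1}^{k-1}(Pw)_i$; then the lemma gives $d(x;w)=\min_{k\in\I_d}|D+2\pi m_k|$. Each $m_k$ is an integer (a partial sum of a rearrangement of the integer vector $w$), and $m_1=m_d=0$. The second equality in~\eqref{sec_diff}, $\min_{k\in\Z}|D+2\pi k|=|(D)_{2\pi}|$, is just the definition of $(\cdot)_{2\pi}$, so since $\{m_k\}\subseteq\Z$ the inequality ``$\ge$'' in the first equality of~\eqref{sec_diff} is automatic. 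The content is ``$\le$'': I must exhibit some $k\in\I_d$ with $D+2\pi m_k\in[-\pi,\pi]$, that is, show that the integer $k^*$ determined by $D+2\pi k^*\in[-\pi,\pi]$ occurs among $m_1,\dots,m_d$. Such a $k^*$ exists because $\{k\in\R:D+2\pi k\in[-\pi,\pi]\}$ is a closed interval of length one, and $k^*\approx-D/(2\pi)$; so it suffices to control which small integers can play the role of $k^*$ and to check they all lie in $\{m_k\}$. I would carry this out by a short case distinction on the order type of $x_1,\dots,x_d$, which fixes $\{m_k\}$ and, via a grouping of $\Delta(x;w)$ into first-order differences, a bound on $D$.

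For $w=b_2=(1,-2,1)^\tT$ there are three cases according to the position $x_2$ (the coordinate carrying $-2$) takes in the ascending order. If $x_2$ is the smallest, $\Delta_2(x)=(x_1-x_2)+(x_3-x_2)$ is a sum of two terms in $[0,2\pi)$, so $D\in[0,4\pi)$, and the reordering forces $\{m_k\}=\{0,-1,-2\}$; these integers cover all admissible $k^*$ on $[0,4\pi)$. If $x_2$ is the largest, the mirror image $x\mapsto-x$ gives $D\in(-4\pi,0]$ and $\{m_k\}=\{0,1,2\}$. If $x_2$ is in the middle, the two terms of $(x_1-x_2)+(x_3-x_2)$ have opposite signs, so $D\in(-2\pi,2\pi)$ and $\{m_k\}=\{0,1,-1\}$, again covering the admissible range. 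Hence~\eqref{sec_diff} holds for $b_2$.

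For $w=b_{1,1}=(-1,1,1,-1)^\tT$ I would split along the six possible orders of the signs of $Pw$, i.e.\ the six interleavings, in increasing order, of the two ``$+1$''-coordinates $\{x_2,x_3\}$ with the two ``$-1$''-coordinates $\{x_1,x_4\}$; the symmetries $x\mapsto-x$ and the exchange of the two coordinate groups cut this down to three genuinely different configurations. If the two smallest coordinates lie in one group, then $\Delta_{1,1}(x)=(x_2-x_1)+(x_3-x_4)$ is a sum of two terms of one sign, so $D\in[0,4\pi)$ (or $(-4\pi,0]$) and $\{m_k\}=\{0,-1,-2\}$ (or $\{0,1,2\}$), exactly as for $b_2$. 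If one group occupies the two extreme positions (patterns $(-,+,+,-)$ and $(+,-,-,+)$), comparing $x_2+x_3$ with twice the largest resp.\ smallest coordinate gives the two-sided bound $D\in(-2\pi,2\pi)$, and $\{m_k\}=\{0,1,-1\}$. In the alternating case, writing the ordered coordinates as $a_1\le b_1\le a_2\le b_2$ with $\{a_1,a_2\}=\{x_1,x_4\}$ and $\{b_1,b_2\}=\{x_2,x_3\}$ gives $\Delta_{1,1}(x)=(b_1-a_1)+(b_2-a_2)\in[0,2\pi)$ and $\{m_k\}=\{0,-1\}$, which still covers $k^*$. Assembling the configurations yields~\eqref{sec_diff} for $b_{1,1}$.

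The point needing care — and the one place a naive shortcut fails — is coincidences among the $x_i$ together with the boundary convention $[(2k+1)\pi]_{2\pi}=\pm\pi$. These are exactly why one must keep all $d$ shifts $x^k$ of Lemma~\ref{cyc_diff_perm} (equivalently, the full list of partial sums along the ascending order) rather than the tempting reduction ``shift the chosen coordinate to $-\pi$ and wrap only the strictly smaller ones''; for $x=(-\pi/2,\pi/2,-\pi/2)^\tT$ the latter produces the set $\{0,2\}$ and misses the value $1$ that is needed, whereas Lemma~\ref{cyc_diff_perm} correctly gives $\{0,1,2\}$ and $d_2(x)=0=|(\Delta_2(x))_{2\pi}|$. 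Accordingly all the range bounds above are to be read non-strictly; alternatively one may observe that both sides of~\eqref{sec_diff} are continuous in $x$ — the left by the continuity of $d(\cdot;w)$ noted after~\eqref{def_cyc_diff}, the right because $t\mapsto|(t)_{2\pi}|$ is the continuous distance from $t$ to $2\pi\Z$ — and restrict to the dense set of $x$ with pairwise distinct coordinates. Apart from this, the argument is just the bookkeeping of the patterns, with no hard individual step; the same mechanism also shows why~\eqref{important_rel} fails for general $w$: for $b_3$ one can have $\{m_k\}=\{0,-1,2\}$, which need not contain $k^*$, cf.\ Example~\ref{conterex}.
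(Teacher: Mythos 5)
Your argument is correct and is essentially the paper's own proof: both rest on Lemma~\ref{cyc_diff_perm} and a case distinction on the ascending order of the coordinates, bounding $\Delta(x;w)$ in each configuration and checking that the integer shifts supplied by the lemma contain the one attaining $\min_{k\in\Z}\lvert\Delta(x;w)+2\pi k\rvert$ (the paper organizes the cases by assuming $x_{j_1}=x_1$ via rotation invariance, you by the position of the weighted coordinates --- a cosmetic difference). One small slip: $m_d=\sum_{j=1}^{d-1}(Pw)_j=-(Pw)_d$ is in general not $0$, but this aside is never used, since only the integrality of the $m_k$ and your (correct) lists of the sets $\{m_k\}$ enter the argument.
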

Note that we need only the minimum over $k \in \{0,\pm 1,\pm2\}$ in
Proposition~\ref{diff_sec} and more precisely
\begin{equation} \label{besser}
d(x;w) 	= \begin{cases}
	\lvert\Delta(x;w)\rvert &\mbox{if }\lvert\Delta(x;w)\rvert\in [0,\pi], \\
	\lvert\Delta(x;w) - 2\pi \sigma\rvert  = 2\pi -  \lvert\Delta(x;w)\rvert &\mbox{if}  \; \;\lvert\Delta(x;w)\rvert \in (\pi,2 \pi],\\
	\lvert\Delta(x;w)\rvert - 2\pi &\mbox{if } \lvert\Delta(x;w)\rvert \in (2\pi,3 \pi],\\
	\lvert\Delta(x;w) - 4\pi \sigma\rvert  = 4\pi -  \lvert\Delta(x;w)\rvert&\mbox{if}  \; \; \lvert\Delta(x;w)\rvert \in (3\pi,4 \pi),
	\end{cases}
\end{equation}
where $\sigma = \sgn\left(\Delta(x;w) \right) \in \{-1,1\}$ and
\[
\sgn (x) :=
\left\{
\begin{array}{rl}
1 & \text{ if} \; x >0,\\
0 & \text{ if} \; x = 0,\\
-1& \text{ otherwise.}
\end{array}
\right.
\]
\begin{proof}
	Since $\lvert x_j - x_k\rvert < 2 \pi$ for $x_j,x_k \in [-\pi,\pi)$,  we see
	that $\lvert\Delta_2(x)\rvert < 4 \pi$ and
	$\lvert\Delta_{1,1}(x)\rvert < 4 \pi$.
	
	First we consider $d_2$. By Lemma~\ref{cyc_diff_perm} we obtain
	\begin{equation}\label{d2_expl}
		d_{2}(x)
		= \min_{k \in \I_3} \bigl\lvert\Delta(P^\tT x^k;b_2)\bigr\rvert
		= \min_{k \in \I_3} \Bigl\lvert\Delta_{2}(x) + 2 \pi \bigl\langle \sum_{j=1}^2 e_j, P b_2 \bigr\rangle \Bigr\rvert,
	\end{equation}
	where we can assume by the cyclic shift invariance of $d_2$ that $x_{j_1} = x_1$.

	If $x^1= (x_1, x_2, x_3)^\tT$, then the corresponding permutation matrix $P$ in Lemma~\ref{cyc_diff_perm} is the identity matrix.
	Further we obtain that $\Delta_{2}(x) = (x_1-x_2)+(x_3-x_2) \in (-2\pi,2\pi)$ and by~\eqref{d2_expl} we get
	\begin{equation*}
		\lvert\Delta_{2}(P^\tT x^2)\rvert
		= \lvert\Delta_{2}(x^2)\rvert
		= \lvert\Delta_{2}(x)+2\pi\rvert
		\qquad\text{and}\qquad
		\lvert\Delta_{2}(x^3)\rvert
		= \lvert\Delta_{2}(x)-2\pi\rvert.
	\end{equation*}
	If $x^1= (x_1, x_3, x_2)^\tT$, then $P = \left(\begin{smallmatrix} 1&0&0\\0&0&1\\0&1&0\end{smallmatrix}\right)$
	and $\Delta_{2}(x)\in (-4\pi,0]$.
	In this case we get
	\begin{equation*}
		\lvert\Delta_{2}(P^\tT x^2)\rvert
		= \lvert\Delta_{2}(x)+2\pi\rvert
		\qquad \text{and}\qquad
		\lvert\Delta_{2}(P^\tT x^3)\rvert
		= \lvert\Delta_{2}(x)+4\pi\rvert.
	\end{equation*}
	This proves the first assertion.

	For \(d_{1,1}\) we can again assume that \(x_{j_1} = x_1\).
	Exploiting that \[\Delta_{1,1}(x_1,x_2,x_3,x_4) = \Delta_{1,1}(x_1,x_3,x_2,x_4)\]
	we have to consider the following three cases:

	If $x^1= (x_1, x_2, x_3,x_4)^\tT$, then $P$ is the identity matrix,
	$\Delta_{1,1}(x) = (x_2-x_1) + (x_3 - x_4) \in (-2\pi,2\pi)$ and
	\begin{equation*}
		\lvert\Delta_{1,1}( x^2)\rvert =  \lvert\Delta_{2}(x)-2\pi\rvert, \quad
		\lvert\Delta_{1,1}( x^3)\rvert =  \lvert\Delta_{2}(x)\rvert, \quad
		\lvert\Delta_{1,1}( x^4)\rvert =  \lvert\Delta_{2}(x)+2\pi\rvert.
	\end{equation*}
	If $x^1= (x_1, x_2, x_4, x_3)^\tT$,
	then~$P = \left(\begin{smallmatrix} 1&0&0&0\\0&1&0&0\\0&0&0&1\\0&0&1&0 \end{smallmatrix}\right)$
	and $\Delta_{2}(x)\in [0,2\pi)$. By ~\eqref{helpful} we have
	\begin{equation*}
		\lvert\Delta_{1,1}( x^2)\rvert = \lvert\Delta_{2}(x)-2\pi\rvert, \quad
		\lvert\Delta_{1,1}( x^3)\rvert = \lvert\Delta_{2}(x)\rvert, \quad
		\lvert\Delta_{1,1}( x^4)\rvert = \lvert\Delta_{2}(x)-2\pi\rvert.
	\end{equation*}
	If $x^1= (x_1, x_4, x_2, x_3)^\tT$,
	then $P = \left(\begin{smallmatrix} 1&0&0&0\\0&0&0&1\\0&1&0&0\\0&0&1&0 \end{smallmatrix}\right)$
	and $\Delta_{2}(x)\in [0,4\pi)$. Here we obtain
	\begin{equation*}
		\lvert\Delta_{1,1}( x^2)\rvert = \lvert\Delta_{2}(x)-2\pi\rvert, \quad
		\lvert\Delta_{1,1}( x^3)\rvert = \lvert\Delta_{2}(x)-4\pi\rvert, \quad
		\lvert\Delta_{1,1}( x^4)\rvert = \lvert\Delta_{2}(x)-2\pi\rvert.
	\end{equation*}
	This finishes the proof.
\end{proof}
%
\section{Proximal mapping of absolute cyclic differences} \label{sec:prox}
%
For a proper, closed, convex function $\varphi: \R^N \rightarrow (-\infty,+\infty]$
and $\lambda > 0$ the {\itshape proximal mapping} $\prox_{\lambda \varphi}: \R^N \rightarrow \R^N$
is defined by
\begin{equation} \label{prox_R}
	\prox_{\lambda \varphi}(f)
	:= \argmin_{x \in \R^N} \frac{1}{2} \lVert f-x\rVert_2^2 + \lambda \varphi (x),
\end{equation}
see~\cite{Mor62}.
The above minimizer exits and is uniquely determined.
Many algorithms which were recently used in variational image processing reduce
to the iterative computation of values of proximal mappings.
An overview of applications of proximal mappings is given in~\cite{PB2013}.

In this section, we are interested in proximal mappings of absolute cyclic
differences \(d(\cdot;w)^p\), i.e.,
\(
\prox_{\lambda d(\cdot;w)^p}:(\mathbb S^1)^d \rightarrow (\mathbb S^1)^d,
\)
for $w \in \mathbb R^d$. More precisely, we will determine
for ${\mathbb S}^1$-valued vectors represented by $f \in [-\pi,\pi)^d$
the values
\[
\prox_{\lambda d(\cdot;w)^p }(f)
:= \argmin_{x \in [-\pi,\pi)^d } \frac{1}{2} \sum_{j=1}^d d(x_j,f_j) ^2
	+ \lambda d(x;w)^p, \qquad \lambda > 0
\]
for $p \in \{ 1,2\}$ and first and second order absolute cyclic differences
$d(\cdot;w)$, $w \in \{b_1,b_2,b_{1,1}\}$.
Here $\argmin_{x \in [-\pi,\pi)^d }$ means that we are looking for the representative of $x \in ({\mathbb S}^1)^d$
in $ [-\pi,\pi)^d$.
In particular, we will see that
these proximal mapping are single-valued for $f \in [-\pi,\pi)^d$ with
$\lvert(\langle f,w \rangle)_{2\pi} \rvert < \pi$ and have two values for
$\lvert(\langle f,w \rangle)_{2\pi} \rvert = \pi$.

We start by considering the proximal mappings of the appropriate differences
in $\R^d$. Then we use the results to find the proximal functions of the
absolute cyclic differences.
%
\subsection{Proximity of differences on \texorpdfstring{$\R^d$}{ℝ\textsuperscript{d}}}
First we give analytical expressions for
$\prox_{\lambda \lvert\langle\cdot,w\rangle - a\rvert^p}$,
where $p \in \{1,2\}$ and $w \in \R^d$, $a \in \R$.
Since we could not find a corresponding reference in the literature, the
computation of the minimizer of
\begin{equation} \label{prox_1}
	E (x;f,a,w) := \frac{1}{2} \lVert f - x\rVert_2 ^2 + \lambda \lvert \langle x,w \rangle - a\rvert^p,
	 \qquad \lambda > 0
\end{equation}
is described in the following lemmas.
We start with $p=1$.

\begin{lemma} \label{lem:proxy_R}
For given $f \in \mathbb R^d$ and $0 \not = w \in \mathbb R^d$, $a \in \R$
set
\[
s := \sgn (\langle f,w \rangle - a) \quad \text{and} \quad
\mu := \frac{\langle f,w \rangle - a}{\lVert w\rVert_2^2}.
\]
Then the minimizer $\hat x$ of
\begin{equation} \label{prox}
	E (x;f,a,w) :=
	\frac{1}{2} \lVert f - x\rVert_2 ^2 + \lambda \lvert \langle x,w \rangle
		- a\rvert, \qquad \lambda > 0
\end{equation}
is given by
\begin{equation} \label{argmin_E}
\hat x = f - s \, \min\{\lambda ,\lvert\mu\rvert\} \,  w
\end{equation}
and the minimum by
\begin{equation} \label{min_E}
E(\hat x;f,a,w) =
\left\{
\begin{array}{cl}
	\lVert w\rVert_2^2 \, \frac{1}{2} \mu^2
		&\text{ if } \lvert\mu\rvert  \le \lambda,\\
\lVert w\rVert_2^2 \bigl( \frac{1}{2} \lambda^2
		+ \lambda (\lvert\mu\rvert - \lambda )\bigr)&\text{ otherwise.}
\end{array}
\right.
\end{equation}
\end{lemma}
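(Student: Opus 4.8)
The plan is to minimize the convex function $E(\cdot;f,a,w)$ in~\eqref{prox} by exploiting that it depends on $x$ only through the scalar $t := \langle x,w\rangle$ and a quadratic penalty toward $f$. First I would fix the value $t = \langle x,w\rangle$ and minimize the data term $\tfrac12\lVert f-x\rVert_2^2$ over the affine hyperplane $\{x : \langle x,w\rangle = t\}$; this is a standard projection computation giving the minimizer $x = f + \tfrac{t - \langle f,w\rangle}{\lVert w\rVert_2^2}\, w$ and minimal value $\tfrac{(t-\langle f,w\rangle)^2}{2\lVert w\rVert_2^2}$. Substituting back reduces the problem to the one-dimensional minimization
\[
\min_{t \in \R}\ \frac{(t - \langle f,w\rangle)^2}{2\lVert w\rVert_2^2} + \lambda\lvert t - a\rvert,
\]
which is precisely the soft-thresholding (prox of the absolute value) problem after the substitution $u := t - a$ and with effective step size $\lambda\lVert w\rVert_2^2$.

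Next I would solve this scalar problem explicitly. Writing $\beta := \langle f,w\rangle - a$, the objective in $u$ is $\frac{(u-\beta)^2}{2\lVert w\rVert_2^2} + \lambda\lvert u\rvert$, whose unique minimizer is the soft-threshold $\hat u = \sgn(\beta)\max\{0,\lvert\beta\rvert - \lambda\lVert w\rVert_2^2\}$. In terms of $\mu = \beta/\lVert w\rVert_2^2$ and $s = \sgn(\beta) = \sgn(\mu)$ this reads $\hat t - a = \hat u = s\,\lVert w\rVert_2^2\,\max\{0,\lvert\mu\rvert - \lambda\}$, equivalently $\hat t - \langle f,w\rangle = \hat u - \beta = -s\,\lVert w\rVert_2^2\,\min\{\lambda,\lvert\mu\rvert\}$. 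Plugging this optimal $\hat t$ into the hyperplane formula for $x$ gives
\[
\hat x = f + \frac{\hat t - \langle f,w\rangle}{\lVert w\rVert_2^2}\, w = f - s\,\min\{\lambda,\lvert\mu\rvert\}\, w,
\]
which is~\eqref{argmin_E}. For the minimum value~\eqref{min_E} I would just evaluate $E(\hat x;f,a,w)$: the data term equals $\tfrac12\lVert w\rVert_2^2\min\{\lambda,\lvert\mu\rvert\}^2$ and the penalty term equals $\lambda\lvert\hat t - a\rvert = \lambda\lvert w\rVert_2^2\max\{0,\lvert\mu\rvert-\lambda\}$; summing and splitting into the two cases $\lvert\mu\rvert\le\lambda$ and $\lvert\mu\rvert>\lambda$ yields the stated expressions (in the second case $\tfrac12\lambda^2 + \lambda(\lvert\mu\rvert-\lambda)$, both scaled by $\lVert w\rVert_2^2$).

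Uniqueness is not really an obstacle here: $E$ is strongly convex in the variable $x$ restricted to the direction $w$ and linear-plus-constant in the orthogonal complement once $t$ is fixed — more cleanly, $E$ is coercive and strictly convex in $x$ (the map $x \mapsto \tfrac12\lVert f-x\rVert_2^2$ is strictly convex and $x\mapsto\lambda\lvert\langle x,w\rangle-a\rvert$ is convex), so the minimizer exists and is unique, and the reduction above identifies it. The one point requiring a little care is the boundary case $\beta = 0$ (i.e. $\langle f,w\rangle = a$), where $s$ is defined via $\sgn(0) = 0$ in the paper's convention; then $\hat x = f$, which is consistent with both formulas. I expect the only mildly delicate step to be bookkeeping the sign conventions and verifying that the soft-thresholding identity is applied with the correct effective parameter $\lambda\lVert w\rVert_2^2$ rather than $\lambda$; everything else is routine one-variable calculus or a direct appeal to the well-known prox of $\lvert\cdot\rvert$.
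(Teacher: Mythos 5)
Your proposal is correct, and it reaches \eqref{argmin_E} and \eqref{min_E} by a genuinely different route than the paper. The paper first normalizes the problem (dividing by a nonzero component $w_j$ and shifting by $\tfrac{a}{w_j}e_j$ to remove the offset $a$), and then invokes Fenchel duality: the dual of $\min_y \tfrac12\lVert g-y\rVert_2^2+\nu\lvert\langle v,y\rangle\rvert$ is a one-dimensional quadratic over the interval $\lvert t\rvert\le\nu$, and the primal--dual relation $\hat y = g-\hat t\,v$ delivers the minimizer. You instead stay entirely on the primal side: you partially minimize the data term over the hyperplanes $\{x:\langle x,w\rangle=t\}$ via the explicit projection $x=f+\tfrac{t-\langle f,w\rangle}{\lVert w\rVert_2^2}w$, which collapses the problem to the scalar soft-thresholding problem in $t$ with effective parameter $\lambda\lVert w\rVert_2^2$, and your bookkeeping of $\hat t-\langle f,w\rangle=-s\,\lVert w\rVert_2^2\min\{\lambda,\lvert\mu\rvert\}$ in both regimes $\lvert\mu\rvert\le\lambda$ and $\lvert\mu\rvert>\lambda$ is accurate, as is the evaluation of the two terms of $E(\hat x;f,a,w)$ giving \eqref{min_E}. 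Your approach is more elementary and self-contained (it needs only the projection onto an affine hyperplane and the well-known prox of $\lvert\cdot\rvert$, and it handles the offset $a$ without any substitution trick), while the paper's duality argument packages the same one-dimensional reduction through the constrained dual variable and generalizes naturally to penalties whose duals are simple; your strict-convexity remark settles existence and uniqueness just as well, and your treatment of the degenerate case $\langle f,w\rangle=a$ (where $s=0$ and $\hat x=f$) is consistent with the paper's sign convention.
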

\begin{proof}
Since $w \not = 0$, there exists a component $w_j \not = 0$
and we rewrite
\[
	E (x;f,a,w)
	= \frac{1}{2}\lVert f - x\rVert_2 ^2 + \lambda \lvert w_j\rvert \, \big\lvert \big\langle \frac{w}{w_j}, x - \frac{a}{w_j} e_j \big\rangle \big\rvert.
\]
Substituting $y := x - \frac{a}{w_j} e_j$, $g = f - \frac{a}{w_j} e_j$
and~$\nu := \lambda \lvert w_j\rvert$, $v := \frac{w}{w_j}$ we see
that~$\hat x = \hat y$, where~$\hat y$ is the minimizer of
\[
	F (y;g,v) := \frac{1}{2} \lVert g - y\rVert_2 ^2
		+ \nu \lvert\langle v,y \rangle\rvert.
\]
The (Fenchel) dual problem of  $\argmin_{y \in \mathbb R^d} F(y)$ reads
\begin{equation} \label{dual}
	\hat t := \argmin_{t \in \mathbb R} \left\{ \lVert g - t \,  w \rVert_2^2
		\quad \mbox{subject to} \quad \lvert t\rvert \le \nu \right\}
\end{equation}
and the relation between the minimizers of the primal and dual problems is given by
\begin{equation} \label{pd}
 \hat y = g - \hat t \, v.
\end{equation}
Rewriting~\eqref{dual} we see that $\hat t$ is the minimizer of
\[
(t - \tilde \mu)^2 \quad \mbox{subject to} \quad \lvert t \rvert \le \nu,
\]
where
$\tilde \mu := \frac{\langle v,g \rangle}{\lVert v \rVert^2}$.
Hence we obtain
\[
\hat t =
\left\{
\begin{array}{cl}
\tilde \mu  &\text{ if } \lvert \tilde \mu \rvert  \le \nu,\\
\sgn(\tilde \mu) \nu &\text{ otherwise.}
\end{array}
\right.
\]
and by~\eqref{pd} further
\[
\hat y = g - \sgn(\tilde \mu) \min \{\nu, \lvert \tilde \mu \rvert\} \, v.
\]
Substituting back results in~\eqref{argmin_E} and plugging $\hat x$ into $E$ we get~\eqref{min_E}.
\end{proof}
%
\begin{example} \label{ex1} Let $p=1$, $a = 0$, and $E (x;f,w) := E (x;f,0,w)$.
\begin{enumerate}
	\item
	For $w = b_1 = (-1, 1)^\tT$ and $f \in \mathbb R^2$ we get
$\lVert w\rVert_2^2 = 2$ and $s = \sgn(f_2 - f_1)$ so that the minimizer of $E(x;f,b_1)$
follows by {\itshape soft shrinkage} of $f$ with threshold $\lambda$:
\begin{equation} \label{second_m}
\hat x = \left( \begin{array}{l} f_1 + s \, m  \\ f_2 - s \, m  \end{array} \right) ,\qquad m := \min\{\lambda, \frac{\lvert f_2 - f_1\rvert} {2}\}.
\end{equation}
	\item
	For $w = b_2 = (1, -2, 1)^\tT$ and $f \in \mathbb R^3$
we obtain $\lVert w\rVert_2^2 = 6$ and $s = \sgn (f_1 - 2f_2 + f_3)$.
Consequently, the minimizer of $E(x;f,b_2)$
is given by
\begin{equation} \label{second_m1}
\hat x = \left( \begin{array}{l} f_1 - s \, m  \\ f_2 + 2s \, m \\ f_3 - s \, m \end{array} \right),
\qquad m := \min\left\{\lambda, \frac{\lvert f_1 - 2f_2 + f_3\rvert} {6}\right\}.
\end{equation}
\item
For $w = b_{1,1} = (-1,1, 1,-1)^\tT$ and $f \in \mathbb R^4$
we obtain $\lVert w\rVert_2^2 = 4$ and $s = \sgn(f_2 - f_1 + f_3-f_4)$, so that the minimizer of
$
E(x;f,b_{1,1}
$
is given by
\begin{equation} \label{second_m2}
\hat x = \left( \begin{array}{l} f_1 + s\, m  \\ f_2 - s\, m \\ f_3 - s\, m \\f_4 + s\, m \end{array} \right),
\qquad m := \min\left\{\lambda, \frac{\lvert f_2 - f_1 + f_3-f_4\rvert} {4}\right\}.
\end{equation}
\end{enumerate} 
\end{example}
We will apply the following corollary.
\begin{corollary} \label{diff_data}
Let $0 \not = w \in \mathbb R^d$. Further, let $f,\tilde f \in \mathbb R^d$ and $a, \tilde a \in \R$
be given such that
$\lvert\langle f,w \rangle -a\rvert < \lvert\langle \tilde f,w \rangle - \tilde a\rvert$.
Then
\begin{equation} \label{ass}
\min_{x \in \mathbb R^d} E(x;f,a,w) < \min_{x \in \mathbb R^d} E(x;\tilde f,\tilde a,w).
\end{equation}
\end{corollary}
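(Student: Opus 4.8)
The plan is to exploit that Lemma~\ref{lem:proxy_R} provides a closed form not only for the minimizer but also, in~\eqref{min_E}, for the minimal value, and that this value depends on the data $(f,a)$ only through the single scalar $\lvert\langle f,w\rangle-a\rvert$, and in a strictly increasing way. Writing $\mu=\mu(f,a):=\bigl(\langle f,w\rangle-a\bigr)/\lVert w\rVert_2^2$ as in Lemma~\ref{lem:proxy_R}, formula~\eqref{min_E} reads
\[
\min_{x\in\R^d} E(x;f,a,w)=g\bigl(\lvert\mu(f,a)\rvert\bigr),\qquad
g(t):=\begin{cases}\tfrac12\lVert w\rVert_2^2\,t^2 & \text{if }t\le\lambda,\\ \lVert w\rVert_2^2\bigl(\tfrac12\lambda^2+\lambda(t-\lambda)\bigr) & \text{if }t>\lambda.\end{cases}
\]
Since $\lvert\mu(f,a)\rvert=\lvert\langle f,w\rangle-a\rvert/\lVert w\rVert_2^2$ and, likewise, $\lvert\mu(\tilde f,\tilde a)\rvert=\lvert\langle\tilde f,w\rangle-\tilde a\rvert/\lVert w\rVert_2^2$, the hypothesis $\lvert\langle f,w\rangle-a\rvert<\lvert\langle\tilde f,w\rangle-\tilde a\rvert$ is equivalent to $\lvert\mu(f,a)\rvert<\lvert\mu(\tilde f,\tilde a)\rvert$. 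Hence~\eqref{ass} follows once $g$ is shown to be strictly increasing on $[0,\infty)$.

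The remaining step is a short case distinction using only $\lVert w\rVert_2\neq 0$ and $\lambda>0$. On $[0,\lambda]$ the branch $t\mapsto\tfrac12\lVert w\rVert_2^2 t^2$ is strictly increasing; on $(\lambda,\infty)$ the branch $t\mapsto\lVert w\rVert_2^2\bigl(\tfrac12\lambda^2+\lambda(t-\lambda)\bigr)$ is affine with positive slope $\lambda\lVert w\rVert_2^2$; and at $t=\lambda$ the two branches agree, with common value $\tfrac12\lambda^2\lVert w\rVert_2^2$, so any $t_1\le\lambda$ satisfies $g(t_1)\le g(\lambda)<g(t_2)$ whenever $t_2>\lambda$. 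Putting the three cases together, $0\le t_1<t_2$ implies $g(t_1)<g(t_2)$; applying this with $t_1=\lvert\mu(f,a)\rvert$ and $t_2=\lvert\mu(\tilde f,\tilde a)\rvert$ yields the claim.

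I do not expect any real obstacle here: once~\eqref{min_E} is available, the corollary is just monotonicity of the explicit one-dimensional profile $g$ of the minimal value, the only point requiring a little care being the matching of the two pieces of $g$ at the breakpoint $t=\lambda$. A slightly more conceptual alternative would be to substitute $x=f+y$ to see that $\min_x E(x;f,a,w)$ depends on $(f,a)$ only through $b:=\langle f,w\rangle-a$ and defines an even convex function of $b$ that vanishes only at $b=0$; an even convex function that is flat on some interval $[b_1,b_2]\subset[0,\infty)$ would have to be constant, hence zero, on $[0,b_2]$, which is impossible, so it is strictly increasing in $\lvert b\rvert$. Invoking~\eqref{min_E} directly is, however, the most economical route.
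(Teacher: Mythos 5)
Your argument is correct and is essentially the paper's own proof: both rest on the closed-form minimal value~\eqref{min_E} from Lemma~\ref{lem:proxy_R} and a case distinction relative to the threshold $\lambda$ (both values below, one on each side, both above), which you merely repackage as strict monotonicity of the one-variable profile $g$ of the minimum in $\lvert\mu\rvert$. No gap; the breakpoint matching you check at $t=\lambda$ corresponds exactly to the paper's second case.
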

\begin{proof}
Set
$\mu := \frac{\langle f,w \rangle - a}{\lVert w\rVert_2^2}$ and
$\tilde \mu := \frac{\langle \tilde f,w \rangle- \tilde a}{\lVert w\rVert_2^2}$.
By assumption $\lvert\mu\rvert < \lvert\tilde  \mu\rvert$
and according to~\eqref{min_E} we have to consider three cases.
\begin{enumerate}[label=\arabic*.]
	\item 
	Let $\lvert\tilde \mu\rvert \le \lambda$. Then by assumption
	also~$\lvert \mu\rvert < \lambda$
	and we conclude by~\eqref{min_E} that
\[
\min_{x \in \mathbb R^d} E(x;f,a,w)
= \frac{1}{2} \lVert w\rVert^2_2 \mu^2 < \frac{1}{2} \lVert w\rVert_2^2 \tilde \mu^2
= \min_{x \in \mathbb R^d} E(x;\tilde f,\tilde a,w).
\]
	\item 
Let $\lvert\tilde \mu\rvert > \lambda$ and $\lvert \mu\rvert \le \lambda$.
By~\eqref{min_E} this implies
\begin{align*}
\min_{x \in \mathbb R^d} E(x; f,a,w)
	&= \frac{1}{2} \lVert w\rVert_2^2 \mu^2,\\
\min_{x \in \mathbb R^d} E(x;\tilde f,\tilde a,w)
	&= \frac{1}{2} \lVert w\rVert_2^2 \lambda^2 + \lVert w\rVert_2^2 \lambda (\lvert \tilde \mu \rvert - \lambda).
\end{align*}
Since $\lVert w\rVert^2 \lambda (\lvert \tilde \mu \rvert - \lambda) > 0$ and $\lvert \mu\rvert \le \lambda$
we obtain
$\min_{x \in \mathbb R^d} E(x; f,a,w) < \min_{x \in \mathbb R^d} E(x;\tilde f,\tilde a,w)$.
\item 
Let $\lvert\tilde \mu\rvert > \lambda$ and $\lvert \mu\rvert > \lambda$.  By~\eqref{min_E} this
implies
\begin{align*}
    \min_{x \in \mathbb R^d} E(x; f,a,w)
&= \frac{1}{2} \lVert w\rVert_2^2 \lambda^2 + \lVert w\rVert_2^2 \lambda( \lvert\mu\rvert -  \lambda) \\
&< \frac{1}{2} \lVert w\rVert_2^2 \lambda^2 + \lVert w\rVert_2^2 \lambda( \lvert\tilde\mu\rvert -  \lambda)
= \min_{x \in \mathbb R^d} E(x; \tilde f,\tilde a,w)
\end{align*}
and  we are done.\qedhere
\end{enumerate}
\end{proof}
Next we consider the case $p=2$.
\begin{lemma}\label{lem:E_quad}
Let $0 \not = w\in \R^d$.
\begin{enumerate}
 \item
Then, for $f \in \R^d$ and $a\in \R$, the minimizer $\hat{x}$ of
\begin{equation}
 E(x;f,a,w)= \norm{f-x}{2}^{2} + \lambda \bigl(\langle x,w \rangle -a\bigr)^{2}\text{,}\qquad \lambda > 0\label{E_quad}
\end{equation}
is given by
\begin{equation*}
\hat{x} = f-\frac{\lambda (\langle f,w \rangle - a)}{1+\lambda \norm{w}{2}^{2}} \, w
\end{equation*}
and the minimum by
\begin{equation} \label{quad_min}
 E(\hat{x};f,a,w)= \frac{\lambda }{1+\lambda \norm{w}{2}^{2}}\bigl(\langle f,w \rangle -a\bigr)^{2} .
\end{equation}
\item If $\bigl(\langle f,w \rangle -a\bigr)^{2}<\bigl(\langle \tilde{f},w \rangle -\tilde{a}\bigr)^{2}$ for some
$f, \tilde f \in \R^d$ and $a, \tilde a\in \R$, then
\begin{equation}
 \underset{x\in\R^d}{\min} E(x;f,a,w) < \underset{x\in\R^d}{\min}E(x;\tilde{f},\tilde{a},w).
\end{equation}
\end{enumerate}
\end{lemma}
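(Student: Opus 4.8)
The plan is to treat both parts by direct computation, exploiting that $E(\cdot;f,a,w)$ is a coercive, strictly convex quadratic on $\R^d$: the term $\norm{f-x}{2}^{2}$ is strictly convex and $\lambda(\langle x,w\rangle -a)^{2}$ is convex for $\lambda>0$, so a unique minimizer exists and is characterised by the vanishing of the gradient. (That $w\neq 0$ will only be used to ensure $\norm{w}{2}^{2}>0$, so that the divisions appearing below are legitimate.)

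For part i) I would compute $\nabla_x E(x;f,a,w) = -2(f-x) + 2\lambda\bigl(\langle x,w\rangle - a\bigr)w$ and set it to zero, obtaining the stationarity identity $\hat x = f - \lambda\bigl(\langle \hat x,w\rangle - a\bigr)\,w$. Taking the inner product of this identity with $w$ turns it into a scalar linear equation for $\hat t := \langle \hat x,w\rangle - a$, namely $\hat t\,(1+\lambda\norm{w}{2}^{2}) = \langle f,w\rangle - a$, so that $\hat t = \bigl(\langle f,w\rangle - a\bigr)\big/\bigl(1+\lambda\norm{w}{2}^{2}\bigr)$; substituting back into the stationarity identity yields the asserted formula for $\hat x$. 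For the minimal value I would simply evaluate $E$ at $\hat x$: since $f-\hat x = \lambda\hat t\,w$ and $\langle \hat x,w\rangle - a = \hat t$, one gets $E(\hat x;f,a,w) = \lambda^{2}\hat t^{2}\norm{w}{2}^{2} + \lambda\hat t^{2} = \lambda\hat t^{2}\bigl(1+\lambda\norm{w}{2}^{2}\bigr)$, and inserting the value of $\hat t$ collapses this to $\lambda\bigl(\langle f,w\rangle - a\bigr)^{2}\big/\bigl(1+\lambda\norm{w}{2}^{2}\bigr)$, which is~\eqref{quad_min}. (Equivalently, one could diagonalise $E$ by decomposing $x-f$ into its component along $w$ and the orthogonal complement; the $w$-direction reduces to a one-dimensional quadratic and the orthogonal part is minimised by $0$.)

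For part ii) I would read off from~\eqref{quad_min} that $\min_{x\in\R^d} E(x;f,a,w) = g\bigl((\langle f,w\rangle - a)^{2}\bigr)$, where $g(u) := \lambda u\big/\bigl(1+\lambda\norm{w}{2}^{2}\bigr)$ is a strictly increasing function of $u\ge 0$ because $\lambda>0$ and $\norm{w}{2}^{2}>0$. Hence the hypothesis $\bigl(\langle f,w\rangle - a\bigr)^{2} < \bigl(\langle \tilde f,w\rangle - \tilde a\bigr)^{2}$ immediately gives the claimed strict inequality between the two minima. This is the $p=2$ analogue of Corollary~\ref{diff_data}, but simpler, since here there is a single closed-form expression for the minimum rather than a three-case formula.

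There is essentially no hard step in this lemma; the only point that needs a little care is the inner-product reduction in part i) (or, alternatively, the orthogonal decomposition along $w$), together with the observation that $w\neq 0$ makes the quadratic genuinely strictly convex in the $w$-direction so that $\hat t$, and hence $\hat x$, is well defined.
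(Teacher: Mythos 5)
Your proposal is correct, and it follows the paper's route in all essentials: both arguments set the gradient of the (strictly convex, coercive) quadratic to zero and solve the resulting normal equation $(I+\lambda w w^\tT)\hat x = f+\lambda a w$, then evaluate $E$ at $\hat x$ for \eqref{quad_min}, and part ii) is in both cases an immediate consequence of the closed-form minimum. The only difference is how that linear system is solved: the paper invokes the Sherman--Morrison formula to invert $I+\lambda w w^\tT$ explicitly, whereas you pair the stationarity identity with $w$ to obtain the scalar equation $\hat t\,(1+\lambda\lVert w\rVert_2^2)=\langle f,w\rangle-a$ for $\hat t=\langle \hat x,w\rangle-a$ and substitute back. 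Your reduction is slightly more elementary (no external matrix identity needed) and also makes the energy evaluation cleaner, since $f-\hat x=\lambda\hat t\,w$ gives $E(\hat x)=\lambda\hat t^2(1+\lambda\lVert w\rVert_2^2)$ directly; the paper's version buys a ready-made inverse of the rank-one update at the cost of a citation. Either way the content is identical and your argument is complete.
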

%
\begin{proof}
\begin{enumerate}
	\item Setting the gradient of~\eqref{E_quad} to zero results in
\begin{align*}
 2(x-f) + 2\lambda (\langle x,w\rangle-a)\, w &= 0,\\
 (I+\lambda w w^\tT)x &= f + \lambda a w.
\end{align*}
Using the Sherman-Morrison formula~\cite[p. 129]{Bj96} it follows
\begin{align*}
 \hat{x}  &=\left(I-\frac{\lambda}{1+\lambda \norm{w}{2}^{2}}w w^\tT\right)(f+\lambda a w)\\
	  & = f -\frac{\lambda \langle f,w\rangle}{1+\lambda \norm{w}{2}^{2}} \, w + \lambda a w - \frac{\lambda^{2} a \norm{w}{2}^{2}}{1+\lambda \norm{w}{2}^{2}} \, w\\
	  & = f- \frac{\lambda (\langle f,w \rangle- a)}{1+\lambda \norm{w}{2}^{2}} \, w.
\end{align*}
For the corresponding energy we obtain by straightforward computation
\begin{align*}
  E(\hat{x};f,a,w)
  & = \norm{f-\hat{x}}{2}^{2} + \lambda \bigl(\langle x,w \rangle -a\bigr)^{2}\\
 & = \frac{\lambda^{2}\left(\langle f,w\rangle -a\right)^{2}}{\left(1+\lambda \norm{w}{2}^{2}\right)^{2}} \norm{w}{2}^{2}+
\lambda \left[\langle f,w\rangle - \frac{\lambda\left(\langle f,w\rangle -a\right)\lVert w\rVert_2^2}{1+\lambda \norm{w}{2}^{2}}-a \right]^{2}\\
  & = \frac{\lambda }{1+\lambda \norm{w}{2}^{2}}\bigl(\langle f,w \rangle -a\bigr)^{2} \text{.}
\end{align*}
	\item follows directly from~\eqref{quad_min}.\qedhere
\end{enumerate}
\end{proof}


\subsection{Proximity of absolute cyclic differences of first and second order}
Now we turn to $\mathbb S^1$-valued data represented by
$f \in [-\pi,\pi)^d$.
We are interested in the minimizers of
\begin{equation} \label{cp}
{\mathcal E}(x;f,w) := \frac{1}{2}  \sum_{j=1}^d  d(f_j,x_j)^2  + \lambda d(x;w)^p, \quad \lambda > 0
\end{equation}
on $[-\pi,\pi)^d$ for $p \in \{1,2\}$ and $w \in \{ b_1,b_2,b_{1,1} \}$.
We start with  the case $p=1$.
%
\begin{theorem} \label{lem:proxy_b1}
For $w \in \{b_1,b_2,b_{1,1}\}$ set $s := \sgn(\langle f,w \rangle)_{2 \pi}$.
Let $p=1$ and $f \in [-\pi,\pi)^d$, where $d$ is adapted to the respective
length of $w$.
\begin{enumerate}
\item
If $\lvert(\langle f,w \rangle)_{2\pi} \rvert < \pi$, then the unique minimizer
of~${\mathcal E}(x;f,w)$ is given by
\begin{equation} \label{first_e}
\hat x =  (f - s \, m \,w)_{2\pi}, \qquad
m:= \min \left\{\lambda ,\frac{\lvert (\langle f,w \rangle)_{2\pi} \rvert} {\lVert w\rVert_2^2} \right\}.
\end{equation}
\item
\label{proxy-casepi}
If $\lvert(\langle f,w \rangle)_{2\pi} \rvert = \pi$,
then ${\mathcal E}(x;f,w)$ has the two minimizers
\begin{equation} \label{first_ee}
\hat x =  (f \mp s \, m \, w)_{2\pi},
 \qquad m:= \min\left\{\lambda ,\frac{\pi}{\lVert w\rVert_2^2} \right\}.
\end{equation}
\end{enumerate}
\end{theorem}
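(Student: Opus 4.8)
The strategy is to reduce the cyclic minimization problem to the Euclidean one already solved in Lemma~\ref{lem:proxy_R}. The key observation is that for $w \in \{b_1,b_2,b_{1,1}\}$, Proposition~\ref{diff_sec} gives $d(x;w) = \lvert(\Delta(x;w))_{2\pi}\rvert = \min_{k\in\Z}\lvert\langle x,w\rangle + 2\pi k\rvert$, so the cyclic penalty is, on each ``branch'' indexed by $k$, just an absolute linear functional of the type $\lvert\langle x,w\rangle - a\rvert$ with $a = -2\pi k$. Likewise, each data term $d(f_j,x_j)^2 = \min_{\ell_j\in\Z}\lvert x_j - f_j + 2\pi\ell_j\rvert^2$ decomposes into Euclidean branches. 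So I would first lift the problem: write $x_j = (\xi_j)_{2\pi}$ for $\xi_j \in \R$, note that $d(f_j,x_j)^2 \le (f_j - \xi_j)^2$ with equality when $\xi_j$ is the representative of $x_j$ closest to $f_j$, and similarly handle the regularizer, so that $\mathcal E(x;f,w) = \min$ over integer shift vectors of a Euclidean energy $E(\xi;f,a,w)$ from~\eqref{prox}.

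The main work is to show that the \emph{optimal} branch is the one with $a = 0$, i.e.\ no shift is needed — equivalently, that the Euclidean minimizer $\hat x = f - s\min\{\lambda,\lvert\mu\rvert\}w$ with $\mu = (\langle f,w\rangle)_{2\pi}/\lVert w\rVert_2^2$ already lies in (a fundamental domain mapping onto) $[-\pi,\pi)^d$, and that competing branches give strictly larger energy. For the latter I would invoke Corollary~\ref{diff_data}: a nonzero shift replaces $\langle f,w\rangle$ by $\langle f,w\rangle + 2\pi k$ (or shifts some $f_j$), and since $\lvert(\langle f,w\rangle)_{2\pi}\rvert < \pi$ is the \emph{strictly smallest} value of $\lvert\langle f,w\rangle + 2\pi k\rvert$ over $k\in\Z$, any other branch has a strictly larger linear residual, hence strictly larger minimal energy by the Corollary. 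One must also check that shifting individual data coordinates $f_j$ cannot help: combining such a shift with the freedom in $k$ and using translation invariance~\eqref{ti} of $\Delta(\cdot;w)$, a shift of $f_j$ by $2\pi$ changes $\langle f,w\rangle$ by $2\pi w_j$, which is again absorbed into the branch index; so the reduction to ``$a=0$ is optimal'' covers everything. Then~\eqref{first_e} follows from~\eqref{argmin_E} after projecting back via $(\cdot)_{2\pi}$, and the quantity $\lvert\mu\rvert$ there equals $\lvert(\langle f,w\rangle)_{2\pi}\rvert/\lVert w\rVert_2^2$ because, on the optimal branch, $\langle \hat\xi, w\rangle$ and the residual are controlled by the reduced representative $(\langle f,w\rangle)_{2\pi}$ rather than $\langle f,w\rangle$ itself.

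For part~\ref{proxy-casepi}, when $\lvert(\langle f,w\rangle)_{2\pi}\rvert = \pi$ there are two integers $k$ (namely the two nearest) achieving the minimal residual $\pi$, so there are two tied optimal branches; each yields a minimizer by~\eqref{argmin_E} with $s$ of one sign on one branch and the opposite on the other, producing $\hat x = (f \mp s\,m\,w)_{2\pi}$ with $m = \min\{\lambda,\pi/\lVert w\rVert_2^2\}$. Uniqueness within each branch still holds because the Euclidean energy~\eqref{prox} is strictly convex; only the choice of branch is ambiguous.

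**Anticipated obstacle.**
The delicate point is the bookkeeping that ensures no combination of data-coordinate shifts $\ell_j$ \emph{together with} a regularizer shift $k$ can beat the $a=0$ branch — in principle one might fear that moving a data point $f_j$ across the $\pm\pi$ cut could simultaneously reduce the data term and the linear residual. The resolution is that shifting $f_j$ by $2\pi\ell_j$ strictly increases $(f_j-\xi_j)^2$ for the relevant $\xi_j$ unless $\xi_j$ is also allowed outside $[-\pi,\pi)$, but then the genuine cyclic data term $d(f_j,x_j)^2$ is unchanged, so the only real degree of freedom is the regularizer branch $k$, and Corollary~\ref{diff_data} (resp.\ Lemma~\ref{lem:E_quad}(ii) for $p=2$, though here $p=1$) closes it. Making this rigorous — i.e.\ that $\mathcal E$ equals the minimum over $k$ only of $E(\cdot\,;f,-2\pi k,w)$, restricted suitably — is the step I expect to require the most care; everything else is substitution into Lemma~\ref{lem:proxy_R} and applying $(\cdot)_{2\pi}$.
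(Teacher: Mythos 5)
Your proposal follows essentially the same route as the paper's proof: unwrap the cyclic energy into the family of Euclidean branches $E_{k,\sigma}(x)=\tfrac12\lVert f-x-2\pi k\rVert_2^2+\lambda\lvert\langle x,w\rangle-2\pi\sigma\rvert$, solve each via Lemma~\ref{lem:proxy_R}, and select the optimal branch with Corollary~\ref{diff_data}, noting that the residual depends on the shifts only through $\langle k,w\rangle+\sigma$, so its minimal value is $\lvert(\langle f,w\rangle)_{2\pi}\rvert$ and the tie at $\pi$ yields the two minimizers in part~ii). The ``delicate bookkeeping'' you anticipate is exactly what the paper settles by this observation together with the interchange of the minima over $x\in[-\pi,\pi]^d$ and over $(k,\sigma)$, and by picking the unique $k^*$ whose branch minimizer lies in $[-\pi,\pi)^d$.
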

Note that for $w = b_1$ case~\ref{proxy-casepi}) appears exactly if $f_1$ and
$f_2$ are antipodal points.
\begin{proof}
By~\eqref{d} and Lemma~\ref{diff_sec} we can rewrite ${\mathcal E}$ in~\eqref{cp} as
\begin{align}
	{\mathcal E}(x;f,w)
	&:= \frac{1}{2}\sum_{j=1}^d \min_{k_j \in \Z}\lvert f_j - x_j - 2\pi k_j\rvert^2 + \lambda \min_{\sigma \in \Z} \lvert\langle x,w \rangle - 2\pi \sigma\rvert \\
&= \min_{\atop{k \in \Z^d}{\sigma \in \Z} }  \frac{1}{2} \lVert f - x - 2\pi k\rVert_2^2 + \lambda \lvert\langle x,w \rangle - 2\pi \sigma\rvert,
\end{align}
where $k = (k_j)_{j=1}^d$.
Let
\[
E_{k,\sigma}(x) := \frac{1}{2} \lVert f - x - 2\pi k\rVert_2^2 + \lambda \lvert\langle x,w \rangle - 2\pi \sigma\rvert.
\]
We are looking for
\begin{equation} \label{aha}
\min_{x \in [-\pi,\pi)^d} {\mathcal E}(x;f,w)
=  \min_{x \in [-\pi,\pi)^d} \min_{\atop{k \in \Z^d}{\sigma \in \Z}  } E_{k,\sigma}(x)
= \min_{\atop{k \in \Z^d}{\sigma \in \Z}  } \min_{x \in [-\pi,\pi]^d} E_{k,\sigma}(x),
\end{equation}
where the last equality can be seen by the following argument:
If for some $k,\sigma$ the minimizer
\(\hat x := \argmin_{x \in [-\pi,\pi]^d} E_{k,\sigma}(x)\)
has components \(\hat x_j = \pi\) for \(j \in J \subseteq \I_d\), then we get
using \(\tilde x := \hat x - 2\pi \sum_{j \in J} e_j \in [-\pi,\pi)^d\), that
\[
E_{k,\sigma} (\hat x)
= \frac{1}{2} \lVert f - \tilde x - 2\pi\underbrace{( k - \sum_{j \in J} e_j )}_{\tilde k}\rVert_2^2
+ \lambda \lvert\langle \tilde x,w \rangle - 2\pi \underbrace{(\sigma - \langle \sum_{j \in J} e_j,w \rangle)}_{\tilde \sigma}\rvert
=
E_{\tilde k, \tilde \sigma} (\tilde x).
\]
By Lemma~\ref{lem:proxy_R} the minimizers over $\R^d$ of $E_{k,\sigma}(x)$
are given by
\begin{equation} \label{important}
 \hat x_{k,\sigma} = f - 2\pi k - s_{k,\sigma} \, m_{k,\sigma} \, w,
\end{equation}
where
\begin{equation}
s_{k,\sigma} := \sgn \left( \nu_{k,\sigma} \right) , \quad
m_{k,\sigma} := \min \left\{ \lambda , \frac{\lvert  \nu_{k,\sigma} \rvert} {\lVert w\rVert_2^2} \right\}
\quad \text{and} \quad
\nu_{k,\sigma} := \langle f,w \rangle - 2\pi ( \langle k,w \rangle + \sigma).
\end{equation}
By Corollary~\ref{diff_data} the minimum of $E_{k,\sigma}$ is determined
by~$\lvert\nu_{k,\sigma}\rvert$. Note that 
$\lvert\langle f,w \rangle\rvert < 2 \pi$
for~$w = b_1$ and $\lvert\langle f,w \rangle\rvert < 4 \pi$
for~$w \in \{b_2,b_{1,1}\}$. We distinguish two cases.
\begin{enumerate}[label=\arabic*.]
\item 
If $\langle f,w \rangle \in  ( (2r-1)\pi, (2r+1)\pi )$, $r \in \Z$
then $\nu_{k,\sigma}$ attains its smallest value exactly for $\langle k,w \rangle + \sigma = r$ and
\[
\nu_{k,r - \langle k,w \rangle} = \langle f,w \rangle - 2\pi r  =  (\langle f,w \rangle)_{2\pi}.
\]
By~\eqref{important} we obtain
\[
\hat x_{k,r - \langle k,w \rangle} = f - 2 \pi k - s \, m \, w
\]
with $s,m$ as in~\eqref{first_e}.
Corollary~\ref{diff_data} implies that
\[
E_{k,r - \langle k,w \rangle}(\hat x_{k,r - \langle k,w \rangle}) < E_{k,\sigma} (\hat x_{k,\sigma})
\le \min_{x \in [-\pi,\pi]^d} E_{k,\sigma} (x)\qquad \forall \sigma \in \Z \backslash \{r - \langle k,w \rangle \}.
\]
Finally, there exists exactly one $k^*\in \Z^d$ such that
$\hat x_{k^*,r - \langle k^*,w \rangle} \in [-\pi,\pi)^d$ and
by~\eqref{aha} we conclude that
\[
	\hat x := \hat x_{k^*,r - \langle k^*,w \rangle} = f - 2 \pi k^* - s \, m \, w = (f  - s \, m \, w)_{2\pi}
\]
is the unique minimizer of ${\mathcal E}(x;f,w)$ over $[-\pi,\pi)^d$.
\item
If $\langle f,w \rangle = (2r-1)\pi$, $r \in \Z$,
then $\nu_{k,\sigma}$ attains its smallest value exactly for
$\langle k,w \rangle + \sigma \in \{r,r-1\}$ and by Corollary~\ref{diff_data}
the minimum of the corresponding functions
$E_{k,\sigma}$ is smaller than those of the other functions in~\eqref{aha}.
We obtain
\begin{equation*}
	\nu_{k,r - \langle k,w \rangle} = -\pi, \quad \nu_{k,r-1 - \langle k,w \rangle} = \pi	
\end{equation*}
and
\begin{equation*}
	\hat x_{k,r - \langle k,w \rangle} = f - 2 \pi k +  m \, w , \quad \hat x_{k,r-1 - \langle k,w \rangle}
= f - 2 \pi k -  m \, w, \qquad m:= \min\left\{\lambda ,\frac{\pi}{\lVert w\rVert_2^2} \right\}.
\end{equation*}
\end{enumerate}
As in part 1 of the proof we conclude that
$
\hat x = (f \pm m \, w)_{2\pi}
$
are the minimizers of ${\mathcal E}(x;f,w)$ over $[-\pi,\pi)^d$. This finishes the proof.
\end{proof}
Next we focus on $p = 2$.
\begin{theorem} \label{thm:p2}
Let $p=2$ in~\eqref{cp}, $w \in \{b_1,b_2,b_{1,1}\}$ and $f \in [-\pi,\pi)^d$, where $d$ is adapted to the respective length of $w$.
\begin{enumerate}
\item
If $\lvert(\langle f,w \rangle)_{2\pi} \rvert < \pi$, then the unique minimizer of
${\mathcal E}(x;f,w)$ is given by
\begin{equation}
\hat x = \left(f-\frac{\lambda (\langle f,w \rangle)_{2\pi}}{1+\lambda \norm{w}{2}^{2}} w\right)_{2 \pi}\text{.}
\end{equation}
\item
If $\lvert(\langle f,w \rangle)_{2\pi} \rvert = \pi$,
then ${\mathcal E}(x;f,w)$ has the two minimizers
\begin{equation}
\hat x = \left(f \mp \frac{\lambda \pi}{1+\lambda \norm{w}{2}^{2}} w \right)_{2 \pi}\text{.}
\end{equation}
\end{enumerate}
\end{theorem}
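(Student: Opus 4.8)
The strategy is to mirror the proof of Theorem~\ref{lem:proxy_b1}, replacing the invocation of Lemma~\ref{lem:proxy_R} and Corollary~\ref{diff_data} by the quadratic counterparts in Lemma~\ref{lem:E_quad}. First I would use~\eqref{d} together with Proposition~\ref{diff_sec} (applicable since $w\in\{b_1,b_2,b_{1,1}\}$, where for $w=b_1$ the relation is~\eqref{d} itself) to rewrite
\[
{\mathcal E}(x;f,w)
= \min_{\atop{k \in \Z^d}{\sigma \in \Z}} \Bigl( \tfrac12 \lVert f - x - 2\pi k\rVert_2^2 + \lambda \bigl(\langle x,w \rangle - 2\pi \sigma\bigr)^2 \Bigr)
=: \min_{\atop{k \in \Z^d}{\sigma \in \Z}} E_{k,\sigma}(x),
\]
and then exchange the minimization over $x\in[-\pi,\pi)^d$ with the minimization over $(k,\sigma)$, extending the inner domain to the compact set $[-\pi,\pi]^d$. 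The boundary argument from Theorem~\ref{lem:proxy_b1} carries over verbatim: if a minimizer of $E_{k,\sigma}$ on $[-\pi,\pi]^d$ has components equal to $\pi$ on an index set $J$, subtracting $2\pi\sum_{j\in J}e_j$ reshuffles $(k,\sigma)$ into $(\tilde k,\tilde\sigma)$ without changing the objective value, so the minimum is attained on $[-\pi,\pi)^d$.

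Next I would apply Lemma~\ref{lem:E_quad}(i) with $a = 2\pi(\langle k,w\rangle + \sigma)$ to get the explicit unconstrained minimizer
\[
\hat x_{k,\sigma} = f - 2\pi k - \frac{\lambda\,\nu_{k,\sigma}}{1+\lambda\lVert w\rVert_2^2}\,w,
\qquad \nu_{k,\sigma} := \langle f,w\rangle - 2\pi(\langle k,w\rangle + \sigma),
\]
with minimal value $\frac{\lambda}{1+\lambda\lVert w\rVert_2^2}\,\nu_{k,\sigma}^2$. By Lemma~\ref{lem:E_quad}(ii) the minimum of $E_{k,\sigma}$ is strictly monotone in $\nu_{k,\sigma}^2$, so the optimal $(k,\sigma)$ are precisely those minimizing $\lvert\nu_{k,\sigma}\rvert$. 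Since $\langle k,w\rangle+\sigma$ ranges over all of $\Z$ (already $\sigma$ alone does), the smallest achievable value of $\lvert\nu_{k,\sigma}\rvert$ is $\lvert(\langle f,w\rangle)_{2\pi}\rvert$.

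Finally I would split into the two cases exactly as before. If $\lvert(\langle f,w\rangle)_{2\pi}\rvert<\pi$, there is a unique residue class $\langle k,w\rangle+\sigma=r$ realizing the minimum, namely $\nu = (\langle f,w\rangle)_{2\pi}$, and among the corresponding $\hat x_{k,\sigma}$ exactly one lies in $[-\pi,\pi)^d$; it equals $\bigl(f - \tfrac{\lambda(\langle f,w\rangle)_{2\pi}}{1+\lambda\lVert w\rVert_2^2}w\bigr)_{2\pi}$, giving the unique minimizer. If $\lvert(\langle f,w\rangle)_{2\pi}\rvert=\pi$, two residue classes tie, yielding $\nu=\pm\pi$ and the two minimizers $\bigl(f \mp \tfrac{\lambda\pi}{1+\lambda\lVert w\rVert_2^2}w\bigr)_{2\pi}$. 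I expect the only mildly delicate point to be the bookkeeping showing that exactly one $k^*\in\Z^d$ brings $\hat x_{k^*,\sigma}$ into the half-open box $[-\pi,\pi)^d$ in the non-degenerate case — but this is identical to the corresponding step in the proof of Theorem~\ref{lem:proxy_b1} and requires no new idea; everything else is a direct substitution of the quadratic lemma for the $\ell^1$ one.
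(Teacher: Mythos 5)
Your proposal is correct and is essentially the paper's own proof: the paper disposes of this theorem in one line, stating that it ``follows the lines of the proof of Theorem~\ref{lem:proxy_b1} using Lemma~\ref{lem:E_quad}'', which is precisely the substitution you carry out (quadratic lemma in place of Lemma~\ref{lem:proxy_R} and Corollary~\ref{diff_data}, with the same unwrapping over $(k,\sigma)$, boundary argument, and case split on $\lvert(\langle f,w\rangle)_{2\pi}\rvert$). The only cosmetic point is the factor $\tfrac12$ on the data term, where your bookkeeping inherits the paper's own mismatch between~\eqref{cp} and the normalization used in Lemma~\ref{lem:E_quad}; the argument is otherwise identical.
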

\begin{proof}
	The proof follows the lines of the proof of Theorem~\ref{lem:proxy_b1} using Lemma~\ref{lem:E_quad}.
\end{proof}
%
Finally, we need the proximal mapping ${\prox}_{\lambda d(f,\cdot)^{2}}$ for given $f \in (\mathbb S^1)^N$.
The proximal mapping of the (squared) cyclic distance function was also computed (for more general manifolds) in~\cite{FO02}.
Here we give an explicit expression for spherical data.
%
\begin{proposition} \label{Theo:Prox_quad}
For $f,g \in [-\pi,\pi)^N$ let
\begin{equation}
	{\mathcal E}(x;g,f) := d(g,x)^{2} + \lambda d(f,x)^{2} =
	\sum_{j=1}^{N} d(g_j,x_j)^{2}+\lambda d(f_j,x_j)^{2}.
\end{equation}
Then the minimizer(s) of
${\mathcal E}(x;g,f)$ are given by
\begin{equation} \label{min_quad_1}
\hat x =  \left( \frac{g+\lambda f}{1+\lambda} + \frac{\lambda}{1+\lambda} \, 2\pi \, v \right)_{2\pi},
\end{equation}
where $v = (v_j)_{j=1} ^N \in \R^N$ is defined by
\[
v_j :=
\left\{
\begin{array}{ll}
0 & \mbox{if \(\lvert g_j-f_j\rvert \le \pi \)},\\
\sgn(g_j - f_j) & \mbox{ if \(\lvert g_j-f_j\rvert > \pi\)}
 \end{array}
\right.
\]
and the minimum is
\begin{equation}
\mathcal{E}(\hat x;g,f) = \frac{\lambda}{1+\lambda}(g-f)_{2\pi}^{2}.
\end{equation}
\end{proposition}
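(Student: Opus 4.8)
The plan is to reduce the problem to a scalar one and then imitate the unwrapping argument from the proof of Theorem~\ref{lem:proxy_b1}. Since
\[
\mathcal E(x;g,f) = \sum_{j=1}^N \bigl( d(g_j,x_j)^2 + \lambda\, d(f_j,x_j)^2 \bigr)
\]
splits into a sum of terms, each depending on a single coordinate $x_j$, it suffices to minimize $\psi(t):=d(g_j,t)^2+\lambda\,d(f_j,t)^2$ over $t\in[-\pi,\pi)$ for each fixed $j$ and then assemble the coordinates. First I would invoke $d(a,b)^2=\min_{k\in\Z}(a-b-2\pi k)^2$ from~\eqref{d} to write $\psi(t)=\min_{k,l\in\Z}\bigl[(g_j-t-2\pi k)^2+\lambda(f_j-t-2\pi l)^2\bigr]$, and then exchange the minimization over $t\in[-\pi,\pi)$ with the minimization over $(k,l)\in\Z^2$. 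As in Theorem~\ref{lem:proxy_b1}, one first passes to the closed interval $[-\pi,\pi]$ and absorbs a boundary value $t=\pi$ into a shift of $(k,l)$, which shows the exchange is legitimate and that it is enough to minimize over all $t\in\R$.

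For fixed $(k,l)$ the inner problem is the one-dimensional case $d=1$, $w=1$ of Lemma~\ref{lem:E_quad} (with the roles of $f$ and $a$ played by $g_j-2\pi k$ and $f_j-2\pi l$): its minimizer is the weighted mean $\hat t(k,l)=\tfrac{1}{1+\lambda}\bigl((g_j-2\pi k)+\lambda(f_j-2\pi l)\bigr)$ and its optimal value equals $\tfrac{\lambda}{1+\lambda}\bigl(g_j-f_j-2\pi(k-l)\bigr)^2$, which depends on $(k,l)$ only through $n:=k-l$. Hence the optimal value is minimized exactly when $n$ is chosen so that $g_j-f_j-2\pi n$ is the representative of $g_j-f_j$ in $[-\pi,\pi)$ — i.e. $n=0$ if $\lvert g_j-f_j\rvert\le\pi$ and $n=\sgn(g_j-f_j)$ if $\lvert g_j-f_j\rvert>\pi$, using $\lvert g_j-f_j\rvert<2\pi$ — which is precisely $n=v_j$.

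Fixing $n=v_j$ and letting $l$ run over $\Z$ (with $k=l+v_j$), the minimizers $\hat t(k,l)=\tfrac{g_j+\lambda f_j}{1+\lambda}-\tfrac{2\pi v_j}{1+\lambda}-2\pi l$ form a single residue class modulo $2\pi$, whose unique representative in $[-\pi,\pi)$ is $\bigl(\tfrac{g_j+\lambda f_j}{1+\lambda}-\tfrac{2\pi v_j}{1+\lambda}\bigr)_{2\pi}$. Using $\tfrac{1}{1+\lambda}+\tfrac{\lambda}{1+\lambda}=1$ and the fact that $(\cdot)_{2\pi}$ is invariant under adding $2\pi v_j$, this equals $\bigl(\tfrac{g_j+\lambda f_j}{1+\lambda}+\tfrac{2\pi\lambda v_j}{1+\lambda}\bigr)_{2\pi}$, which is the asserted formula~\eqref{min_quad_1}; summing the per-coordinate minimal values $\tfrac{\lambda}{1+\lambda}\bigl(g_j-f_j-2\pi v_j\bigr)^2=\tfrac{\lambda}{1+\lambda}\bigl((g_j-f_j)_{2\pi}\bigr)^2$ over $j$ gives the stated minimum. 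When $\lvert g_j-f_j\rvert=\pi$ both $n=0$ and $n=\sgn(g_j-f_j)$ realize the minimum, so that coordinate admits two minimizers, exactly as in part~\ref{proxy-casepi}) of Theorem~\ref{lem:proxy_b1}; this is the only source of non-uniqueness.

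The main obstacle I anticipate is purely bookkeeping: carefully justifying the interchange of the two minimizations over the half-open box $[-\pi,\pi)^N$ (the $t=\pi$ boundary case, handled as in Theorem~\ref{lem:proxy_b1}) and verifying that all $\hat t(k,l)$ with a fixed value of $n=k-l$ are genuinely congruent modulo $2\pi$, so that applying $(\cdot)_{2\pi}$ to the weighted mean is well defined and independent of the particular choice of integer shifts — together with the short algebraic reconciliation of the two equivalent forms of $\hat x_j$ via $\tfrac{1}{1+\lambda}+\tfrac{\lambda}{1+\lambda}=1$.
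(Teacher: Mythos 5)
Your proposal is correct and follows essentially the same route as the paper's proof: componentwise reduction, unwrapping the squared distances into the real-valued functionals $E_{k,l}(t)=(g_j-t-2\pi k)^2+\lambda(f_j-t-2\pi l)^2$, solving each by the weighted mean, and observing that the optimal value depends only on $n=k-l$, with the case distinction $\lvert g_j-f_j\rvert\lessgtr\pi$ yielding $v_j$ and the boundary case $\lvert g_j-f_j\rvert=\pi$ giving the two minimizers. The only cosmetic differences are that you minimize over all $(k,l)\in\Z^2$ and invoke Lemma~\ref{lem:E_quad} with $w=1$, whereas the paper restricts to $k\in\{0,\sgn g\}$, $l\in\{0,\sgn f\}$ and computes the scalar minimizer directly.
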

\begin{proof}
Obviously, the minimization of ${\mathcal E}$ can be done component wise so that we can restrict our attention to $N=1$.
\begin{enumerate}[label=\arabic*.]
	\item 
	First we look at the minimization problem over $\R$ which reads
\begin{equation}
\min_{x\in\mathbb R}\ (g-x)^{2} + \lambda (f-x)^{2}
\end{equation}
and has the following minimizer and minimum:
\begin{equation}
 \hat{x}  = \frac{g+\lambda f}{1+\lambda}, \qquad (g-\hat{x})^{2} + \lambda(f-\hat{x})^{2} = \frac{\lambda}{1+\lambda}(g-f)^{2}.
\end{equation}
\item 
For the original problem
\begin{align*}
\underset{x\in [-\pi,\pi)}{\min}{\mathcal E}(x;g,f)
& =
\underset{x\in [-\pi,\pi)}{\min} \Bigl\{d(g,x)^{2}+\lambda d(f,x)^{2}\Bigr\} \\
 & = \min_{x\in [-\pi,\pi)} \left\{\min_{k\in \{0,\sgn (g)\}} (g-x-2\pi k)^{2} + \min_{l\in \{0,\sgn (f) \}} \lambda (f-x-2\pi l)^{2}\right\}
\end{align*}
we consider the related energy functionals on $\R$, namely
\begin{equation}
   E_{k,l}(x;g,f) := (g-x-2\pi k)^{2} + \lambda (f-x-2\pi l)^{2}, \quad k\in \{0,\sgn g\}, \, l\in \{0,\sgn f \}.
\end{equation}
By part 1 of the proof these functions have the minimizers
\begin{equation*}
 \hat{x}_{k,l} =  \frac{(g-2\pi k)+\lambda (f-2\pi l)}{1+\lambda} = \frac{g+\lambda f -2 \pi (k+\lambda l)}{1+\lambda }
\end{equation*}
and
\begin{equation}\label{Prox_quad}
   E_{k,l}(\hat{x}_{k,l};g,f) = \frac{\lambda}{1+\lambda}\left( (g-2\pi k)-(f-2\pi l)\right)^{2}
	= \frac{\lambda}{1+\lambda}\left( g-f-2\pi(k- l)\right)^{2}.
\end{equation}
We distinguish three cases:
\begin{enumerate}
 \item If $\abs{g-f}< \pi$, then
the minimum in~\eqref{Prox_quad} occurs exactly for $k=l$ and it holds
\begin{equation*}
 \hat{x}_{k,k} = \frac{g+\lambda f -2 \pi k(1+\lambda )}{1+\lambda }
= \frac{g+\lambda f}{1+\lambda} -2\pi k.
\end{equation*}
For $k=0$ we see that $\hat{x}_{0,0} \in [-\pi,\pi)$ and
$
\mathcal{E}(\hat x;g,f) = \frac{\lambda}{1+\lambda}(g-f)^{2}.
$
\item If $\abs{g-f}> \pi$, then~\eqref{Prox_quad} has its minimum exactly for $k-l = \sgn(g-f)$ and
	\begin{equation*}
		\hat{x}_{k,k-\sgn(g-f)} = \frac{g+\lambda f -2 \pi (k+\lambda(k-\sgn(g-f)))}{1+\lambda}
		= \frac{g+\lambda (f+\sgn(g-f) 2\pi)}{1+\lambda} -2\pi k
	\end{equation*}
which is in $[-\pi,\pi)$ for $k=0$ or $k=\sgn(g)$ and
	\begin{equation*}
		\mathcal{E}(\hat x;g,f) = \frac{\lambda}{1+\lambda}(g-f- \sgn(g-f) 2\pi)^{2}.
	\end{equation*}
\item In the case $\abs{g-f}= \pi$ the minimum in~\eqref{Prox_quad} is attained
for $k-l=0,\pm 1$ so that we have both solutions from i) and ii).
This completes the proof.\qedhere
\end{enumerate}
\end{enumerate}
\end{proof}
\section{Cyclic proximal point method} \label{sec:cpp}
%
The proximal point algorithm (PPA) on the Euclidean space goes back to~\cite{Roc76}.
Recently this algorithm was extended to Riemannian manifolds of non-positive
sectional curvature~\cite{FO02} and also to Hadamard spaces~\cite{Bac13}.
A cyclic version of the proximal point algorithm (CPPA) on the Euclidean space
was given in~\cite{Ber11}, see also the survey~\cite{Ber10}.
A CPPA for Hadamard spaces can be found
in~\cite{Bac13a}.
In the CPPA the original function $J$ is split into a sum $J = \sum_l J_l$ and,
iteratively, the proximal mappings of the functions $J_l$ are applied in a
cyclic way. The great advantage of this method is that often the proximal
mappings of the summands $J_l$ are much easier to compute or can even be given
in a closed form.
In the following we develop a CPPA for functionals of $\mathbb S^1$-valued signals and images containing
absolute cyclic first and second order differences.
%
\subsection{One-dimensional data} \label{sec:signals}
First we  have a look at the one-dimensional case, i.e., at signals.
For given $\mathbb S^1$-valued signals represented by
\(f = \bigl(f_i)_{i=1}^N \in[-\pi,\pi)^N \), \(N \in\mathbb N\),
and regularization parameters \(\alpha,\beta \ge 0\), \(\max\{\alpha, \beta\} \neq 0\), we are
interested in
\begin{align} \label{task_1}
	\argmin_{x\in[-\pi,\pi)^N} J(x), \quad
	J(x) = J(x,f) := F(x; f)
	+ \alpha  \operatorname{TV}_1(x)
	+ \beta \operatorname{TV}_2(x),	
\end{align}
where
\begin{align}\label{eq:Fdist}
	F(x; f) & := \frac{1}{2} \sum_{i=1}^{N} d(f_i,x_i)^2,\\
	\operatorname{TV}_1(x) & := \sum_{i=1}^{N-1}d(x_i,x_{i+1}), \quad
	\operatorname{TV}_2(x)  := \sum_{i=2}^{N-1} d_2(x_{i-1},x_i,x_{i+1}).\label{eq:TV2}
\end{align}
To apply a CPPA we set $J_1(x) := F(x; f)$,
split  \(\alpha \operatorname{TV}_1\)
into an even and an odd part
\begin{equation} \label{eq:splitTVevodd}
	\alpha\operatorname{TV}_1(x)
	=
\sum_{\nu=0}^1
	\alpha\sum_{i=1}^{\bigl\lfloor\!\frac{N-1}{2}\!\bigr\rfloor}
		 d(x_{2i-1+\nu},x_{2i-\nu})
	=: \sum_{\nu=0}^1 J_{2+\nu}(x)
\end{equation}
and \(\beta \operatorname{TV}_2\) into three sums
\begin{align*}
\beta \operatorname{TV}_2(x)
=
\sum_{\nu=0}^2
		\beta\sum_{i=1}^{\bigl\lfloor\!\frac{N-1}{3}\!\bigr\rfloor}
			d_2(x_{3i-2+\nu},x_{3i-1+\nu},x_{3i+\nu})
	=: \sum_{\nu=0}^2 J_{4+\nu}(x)
\end{align*}
Then  the objective function decomposes as
\begin{equation}
J = \sum_{l=1}^6 J_l.
\end{equation}
We compute in the $k$-th cycle of the CPPA the signal
\begin{equation}
x^{(k)} := \prox_{\lambda_k J_6} \left( \prox_{\lambda_k J_5} \ldots \left( \prox_{\lambda_k J_1}  (x^{(k-1)}) \right) \right).
\end{equation}
The different proximal values can be obtained as follows:
\begin{enumerate}
 \item
By Proposition~\ref{Theo:Prox_quad} with $x^{(k-1)}$ playing the role of $g$ we get
\begin{equation}
	x^{(k-1 + \frac{1}{6})} := \prox_{\lambda_k J_{1}}(x^{(k-1)}).
\end{equation}
\item
For $\nu=0,1$, we obtain the vectors
\[
	x^{(k-1 + \frac{\nu+2}{6})} := \prox_{\lambda_k J_{2+\nu}} \left( x^{(k-1 + \frac{\nu+1}{6})} \right)
\]
by applying Theorem~\ref{lem:proxy_b1} with $w = b_1$ independently for the pairs $(x_{2i-1 + \nu},x_{2i + \nu})$, $i = 1,\ldots,\bigl\lfloor\!\frac{N-1}{2}\!\bigr\rfloor$.
\item
For $\nu=0,1,2$, we compute
\[
x^{(k-1 + \frac{\nu+4}{6})} := \prox_{\lambda_k J_{4+\nu}} \left( x^{(k-1 + \frac{\nu+3}{6})} \right)
\]
by applying Theorem~\ref{lem:proxy_b1} with $w = b_2$ independently for the vectors $(x_{3i-2 +\nu},x_{3i -1 + \nu},x_{3i + \nu} )$, $i = 1,\ldots,\bigl\lfloor\!\frac{N-1}{3}\!\bigr\rfloor$.
\end{enumerate}
The parameter sequence $\{\lambda_k\}_k$ of the algorithm should fulfill
\begin{equation}\label{eq:CPPAlambda}
		\sum_{k=0}^\infty \lambda_k = \infty,
		\quad \text{and}
		\quad \sum_{k=0}^\infty \lambda_k^2 < \infty.
\end{equation}
This property is also essential for proving the convergence of the CPPA for real-valued data
and data on a Hadamard manifold, see~\cite{Bac13a,Ber11}.
In our numerical experiments we choose $\lambda_k := \lambda_0/k$ with some initial parameter $\lambda_0 >0$
which clearly fulfill~\eqref{eq:CPPAlambda}. The whole procedure is summarized in Algorithm~\ref{alg:CPPA}.
%
\begin{algorithm}[tbp]
	\caption[]{\label{alg:CPPA} CPPA for minimizing~\eqref{task_1} or~\eqref{task_2} for cyclic data}
	\begin{algorithmic}
		\State \textbf{Input} $\{ \lambda_k \}_k$ fulfilling~\eqref{eq:CPPAlambda}
		and $\alpha$,  $\beta$ or $\alpha = (\alpha_1,\alpha_2)$, $\beta = (\beta_1,\beta_2)$, $\gamma$
		\State data \(f\in[-\pi,\pi)^N\) or \(f\in[-\pi,\pi)^{N\times M}\)\\\vspace{-.5\baselineskip}
		\Function {CPPA}{$\alpha$, $\beta$, $\lambda_0$, $f$}
		\State Initialize \(x^{(0)}=f\), \(k=0\)
		\State Initialize the cycle length as \(c = 6\) (1D) or \(c=15\) (2D)
		\Repeat
		\For{$l \gets 1$ \textbf{to} $c$}
			\State \(x^{(k+\frac{l}{c})} \leftarrow \prox_{\lambda_k J_l}(x^{(k+\frac{l-1}{c})})\)
		\EndFor
		\State $k \gets k+1$
		\Until a convergence criterion are reached
		\State\Return $x^{(k)}$
		\EndFunction
	\end{algorithmic}
\end{algorithm}
%
\subsection{Two-dimensional data}\label{sec:2DTV}
Next we consider two-dimensional data, i.e., images of the
form~\(f := \bigl(f_{i,j})_{i,j=1}^{N,M}\in[-\pi,\pi)^{N\times M}\), $N,M \in \mathbb N$.
Our functional includes \emph{h}orizontal and \emph{v}ertical cyclic first and second order differences $d_1$ and $d_2$
and \emph{d}iagonal (mixed) differences $d_{1,1}$.
For non-negative regularization parameters
$\alpha := (\alpha_1, \alpha_2)$,
$\beta:= (\beta_1,\beta_2)$ and $\gamma$
not all equal to zero
we are looking for
\begin{align}\label{task_2}
\argmin_{x\in[-\pi,\pi)^{N\times M}} J(x),\quad
J(x) = J(x,f) :=
	F(x; f)
	+ \alpha \operatorname{TV}_1(x)
	+ \beta \operatorname{TV}_2^{\mathrm{hv}}(x)
	+ \gamma \operatorname{TV}_2^{\mathrm{d}}(x),
\end{align}
where
\begin{align}
	F(x;f)
	&:=
		\frac{1}{2}
		\sum_{i,j=1}^{n,m} d(f_{i,j},x_{i,j})^2,\label{eq:2DFdist}
\\
	\alpha\operatorname{TV}_1(x)
	&:=
		\alpha_1\sum_{i,j=1}^{N-1,M} d(x_{i,j},x_{i+1,j})
		+
		\alpha_2\sum_{i,j=1}^{N,M-1} d(x_{i,j},x_{i,j+1}),
	\label{eq:2DTV}
\\
	\beta\operatorname{TV}_2^{\mathrm{hv}}(x)
	&:=
	\beta_1\sum_{i=1,j=2}^{N-1,M}
	d_2(x_{i-1,j},x_{i,j},x_{i+1,j}),
	\beta_2\sum_{i=2,j=1}^{N,M-1}
	d_2(x_{i,j-1},x_{i,j},x_{i,j+1})+
	\label{eq:TV2iso}\\
	\gamma \operatorname{TV}_2^{\mathrm{d}}(x)
	&:=
	\gamma \sum_{i,j=1}^{N-1,M-1}
	d_{1,1}(x_{i,j},x_{i+1,j},x_{i,j+1},x_{i+1,j+1}).
	\label{eq:TV2mix}	
\end{align}
Here the objective function splits as
\begin{equation} \label{splitting_15}
J = \sum_{l=1}^{15} J_l
\end{equation}
with the following summands:
Again we set $J_1 := F(x;f)$ and compute
the proximal value of $\lambda_k J_1$ by Proposition~\ref{Theo:Prox_quad}.
Each of the sums in
\(\operatorname{TV}_1\)
and~\(\operatorname{TV}_2^{\text{hv}}\)
can be split analogously as in the
one-dimensional case,
where we have to consider row and column vectors now.
This results in $2(2+3) = 10$ functions $J_2, \ldots,J_{11}$ whose proximal
values can be computed by Theorem~\ref{lem:proxy_b1}.
Finally, we split \(\operatorname{TV}_2^{\text{d}}\).
into the four sums
\begin{equation}\label{eq:TV2mixsplitting}
	\gamma \operatorname{TV}_2^{\text{d}} (x) =
	\sum_{\mu,\nu=0}^1\gamma
	\sum_{i,j=1}^{\bigl\lfloor\frac{N-1}{2}\bigr\rfloor,\bigl\lfloor\frac{M-1}{2}\bigr\rfloor}
	d_{1,1}(x_{2i-1+\mu,2j-1+\nu},x_{2i+\mu,2j-1+\nu},x_{2i-1+\mu,2j+\nu},x_{2i+\mu,2j+\nu})
\end{equation}
and denote the inner sums by $J_{12}, \ldots,J_{15}$. Clearly, the proximal
values of the functions $\lambda_k J_l$, $l=12,\ldots,15$ can be computed
separately for the vectors
\[
	(x_{2i-1+k,2j-1+l},x_{2i+k,2j-1+l},x_{2i-1+k,2j+l},x_{2i+k,2j+l}),
\; i=1,\ldots\bigl\lfloor\frac{N-1}{2}\bigr\rfloor, \;
j=1,\ldots,\bigl\lfloor\frac{M-1}{2}\bigr\rfloor
\]
by Theorem~\ref{lem:proxy_b1} with $w = b_{1,1}$.
In summary, the computation can be done by Algorithm~\ref{alg:CPPA}.
Note that the presented approach immediately generalizes to arbitrary
dimensions.
%
\subsection{Convergence} \label{sec:conv}
%
Since \(\mathbb S^1\) is not a Hadamard space, the convergence analysis of the
CPPA in~\cite{Bac13a} cannot be applied.
We show the convergence of the CPPA for the 2D $\mathbb S^1$-valued
function~\eqref{task_2} under certain conditions.
The 1D setting in~\eqref{task_1} can then be considered as a special case.
In the following, let $\mathbb I := \{1,\ldots,N\}\times\{1,\ldots,M\}$.

Our first condition is that the data $f \in (\mathbb S^1)^{N\times M}$
is dense enough, this means that the distance between neighboring pixels
\begin{equation} \label{eq:defSupNormOfDiff}
d_\infty(f)
: = \max_{(i,j) \in \mathbb I}
		\max_{(k,l) \in N_{i,j}} d(f_{i,j},f_{k,l}),
		\qquad
		\mathcal N_{i,j}
		:=
	\bigl\{
		(k,l)\in\mathbb I
		:
		\,
		\lvert i-k\rvert+\lvert l-j\rvert=1
	\bigr\}
\end{equation}
is sufficiently small.
Similar conditions also appear in the convergence analysis of nonlinear
subdivision schemes for manifold-valued data
in~\cite{WallnerDynCAGD,WeinmannConstrApprox}.
In the context of nonlinear subdivision schemes, even more severe restrictions
such as `almost equally spaced data' are frequently
required~\cite{OswaldHarizanovNormal}.
This imposes additional conditions on the second
order differences to make the data almost lie on a `line'. Our analysis
requires only bounds on the first, but not on the second order differences.

Our next requirement is that the regularization
parameters~$\alpha, \beta, \gamma$ in~\eqref{task_2} are sufficiently small.
For large parameters any solution tends to become almost constant.
In this case, if the data is for example
equidistantly distributed on the circle, e.g.,
$f_{i} = 2\pi i/N$ in 1D,  any~$2\pi j/N$ shift
is again a solution.
In this situation the
model loses its interpretation which is an inherent problem due to
the cyclic structure of the data.

Finally, the parameter sequence $\{\lambda_k\}_k$ of the CPPA has to
fulfill~\eqref{eq:CPPAlambda} with a small $\ell^2$ norm. The later
can be achieved by rescaling.

Our convergence analysis is based on a convergence result in~\cite{Bac13a} and
an unwrapping procedure. We start by reformulating  the convergence result for
the CPPA of real-valued data, which is a special case of~\cite{Bac13a} and
can also be derived from~\cite{Ber10}.
%
\begin{theorem} \label{thm:bacak}
Let $E = \sum_{l=1} ^c E_l$, where $E_l$, $l=1,\ldots,c$, are proper, closed, convex functionals
on $\mathbb R^{N \times M}$. Let $E$ have a global minimizer.
Assume that there exists $L >0$ such that the iterates $\{x^{(k+\frac{l}{c})} \}$ of the CPPA (see Algorithm~\ref{alg:CPPA})
satisfy
\[
E_l(x^{(k)}) - E_l (x^{(k+\frac{l}{c})}) \le L \| x^{(k)} - x^{(k+\frac{l}{c})}\|_2, \quad l=1,\ldots,c,
\]
for all $k\in \mathbb N_0$.
Then the sequence $\{x^{(k)} \}_k$ converges to a minimizer of $E$.
Moreover the iterates fulfill
\begin{align} \label{noch_2}
&\lVert x^{(k+\frac{l-1}{c})} - x^{(k+\frac{l}{c})}\lVert_2 \leq 2 \lambda_k L,
\\
   &\lVert x^{(k+1)}-x\rVert_2^2
	\leq
	\lVert x^{(k)}-x\rVert_2^2
		- 2 \lambda_k [E(x^{(k)})-E(x)]
		+ 2 \lambda_k^2 L^2 c(c+1) \quad \mbox{for all} \; x \in \mathbb R^{N \times M}.\label{noch_1}
		\end{align}
\end{theorem}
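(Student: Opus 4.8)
The plan is to reduce the statement to the cyclic (Euclidean) CPPA convergence result that is cited as the source, namely the version of Ba\v{c}\'ak's theorem, by verifying its hypotheses in the present finite-dimensional convex setting. Since each $E_l$ is proper, closed and convex on $\mathbb R^{N\times M}$ and $E=\sum_l E_l$ has a global minimizer, the only nontrivial input is the uniform Lipschitz-type bound $E_l(x^{(k)})-E_l(x^{(k+\frac{l}{c})})\le L\lVert x^{(k)}-x^{(k+\frac{l}{c})}\rVert_2$, which is assumed. So at the level of this theorem there is essentially nothing to reprove except to package the consequences \eqref{noch_2} and \eqref{noch_1}; these are the standard a priori estimates attached to a proximal step and I would derive them directly rather than quote them.

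First I would establish \eqref{noch_2}. Write $y:=x^{(k+\frac{l-1}{c})}$ and $z:=x^{(k+\frac{l}{c})}=\prox_{\lambda_k E_l}(y)$. The optimality condition for the proximal problem gives $\frac{1}{\lambda_k}(y-z)\in\partial E_l(z)$, hence for any $u$, $E_l(u)\ge E_l(z)+\frac{1}{\lambda_k}\langle y-z,u-z\rangle$; taking $u=y$ yields $E_l(y)-E_l(z)\ge\frac{1}{\lambda_k}\lVert y-z\rVert_2^2$. Combining with the assumed bound $E_l(y)-E_l(z)\le L\lVert y-z\rVert_2$ gives $\frac{1}{\lambda_k}\lVert y-z\rVert_2^2\le L\lVert y-z\rVert_2$, i.e. $\lVert y-z\rVert_2\le\lambda_k L$. (The factor $2$ in \eqref{noch_2} is a harmless slack; one can also keep track of the exact constant $\lambda_k L$.) This per-step bound is the engine for everything else.

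Next I would prove the Fej\'er-type inequality \eqref{noch_1}. Fix any $x\in\mathbb R^{N\times M}$. For a single proximal step $z=\prox_{\lambda_k E_l}(y)$, firm nonexpansiveness / the subgradient inequality gives $\lVert z-x\rVert_2^2\le\lVert y-x\rVert_2^2-2\lambda_k\big(E_l(z)-E_l(x)\big)$. Summing over $l=1,\dots,c$ and telescoping the distances, and then rewriting $E_l(z)-E_l(x)$ as $E_l(x^{(k)})-E_l(x)$ plus the correction $E_l(z)-E_l(x^{(k)})$ (whose absolute value is controlled by $L\lVert x^{(k)}-z\rVert_2$ and, via \eqref{noch_2} and the triangle inequality along the cycle, by a constant times $\lambda_k L^2$), one collects $\sum_l\big(E_l(x^{(k)})-E_l(x)\big)=E(x^{(k)})-E(x)$ and an error term bounded by $2\lambda_k^2 L^2 c(c+1)$. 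The combinatorial factor $c(c+1)$ comes from bounding, for the $l$-th summand, the displacement $\lVert x^{(k)}-x^{(k+\frac{l}{c})}\rVert_2\le\sum_{j\le l}2\lambda_k L$ and summing $\sum_{l=1}^c l$. This establishes \eqref{noch_1}.

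Finally, convergence of $\{x^{(k)}\}_k$ to a minimizer follows from \eqref{noch_1} by the standard argument: since $E(x^{(k)})-E(x^\star)\ge 0$ at a minimizer $x^\star$, inequality \eqref{noch_1} with $x=x^\star$ together with $\sum_k\lambda_k^2<\infty$ shows $\{\lVert x^{(k)}-x^\star\rVert_2\}$ converges (quasi-Fej\'er monotonicity), so $\{x^{(k)}\}$ is bounded; summing \eqref{noch_1} and using $\sum_k\lambda_k=\infty$ forces $\liminf_k\big(E(x^{(k)})-E(x^\star)\big)=0$, hence a subsequence of $x^{(k)}$ converges to a minimizer $\bar x$; applying the Fej\'er estimate again with $x=\bar x$ pins the whole sequence to $\bar x$. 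I expect the only genuinely delicate point to be the bookkeeping of the error term in \eqref{noch_1} --- i.e. getting precisely the constant $2\lambda_k^2 L^2 c(c+1)$ --- since it requires combining the within-cycle displacement bound \eqref{noch_2}, the triangle inequality across up to $c$ substeps, and the one-sided bound $|E_l(x^{(k)})-E_l(x^{(k+\frac{l}{c})})|\le L\lVert x^{(k)}-x^{(k+\frac{l}{c})}\rVert_2$; everything else is routine convex-analysis manipulation. Alternatively, if one prefers to avoid re-deriving these estimates, one can simply invoke \cite{Bac13a} (or \cite{Ber10} in the real-valued case) verbatim, since the hypotheses here are exactly those required there.
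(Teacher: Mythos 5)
Your overall route is not the paper's: the paper offers no proof of this theorem at all, but quotes it as a special case of \cite{Bac13a} (cf.\ also \cite{Ber10}), so your self-contained derivation is the ``different route'' here. Its architecture is the standard incremental-proximal argument that underlies those sources, and most of it is correct: the inequality $\lVert z-x\rVert_2^2\le\lVert y-x\rVert_2^2-2\lambda_k\bigl(E_l(z)-E_l(x)\bigr)$ for $z=\prox_{\lambda_k E_l}(y)$, the telescoping over the cycle, the bookkeeping giving the constant $2\lambda_k^2L^2c(c+1)$ (displacement at most $2\lambda_k L\,l$ after $l$ substeps and $\sum_{l=1}^c l=\tfrac{c(c+1)}{2}$), and the final quasi-Fej\'er argument using $\sum_k\lambda_k=\infty$, $\sum_k\lambda_k^2<\infty$ are all sound. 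Note also that in \eqref{noch_1} only the one-sided inequality $E_l(x^{(k)})-E_l(x^{(k+\frac{l}{c})})\le L\lVert x^{(k)}-x^{(k+\frac{l}{c})}\rVert_2$ is needed, which is exactly what is assumed; your remark about controlling the ``absolute value'' of the correction is unnecessary (and not available).

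There is, however, a genuine gap in your derivation of \eqref{noch_2}. You apply the assumed bound to the consecutive pair $(y,z)=\bigl(x^{(k+\frac{l-1}{c})},x^{(k+\frac{l}{c})}\bigr)$, but the theorem assumes it for the pair $\bigl(x^{(k)},x^{(k+\frac{l}{c})}\bigr)$; these coincide only for $l=1$. For $l\ge2$ the quantity $E_l\bigl(x^{(k+\frac{l-1}{c})}\bigr)-E_l\bigl(x^{(k+\frac{l}{c})}\bigr)$ is not controlled by the stated hypothesis (the earlier prox steps in the cycle may well increase $E_l$), and the obvious repair --- comparing in the prox problem with $u=x^{(k)}$ instead of $u=y$ --- only yields a bound of the form $\lVert y-z\rVert_2\le\lVert x^{(k)}-y\rVert_2+2\lambda_k L$, i.e.\ constants that grow along the cycle rather than the uniform bound \eqref{noch_2}. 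What is really needed at this point is a Lipschitz-type bound between consecutive iterates, equivalently a bound $\lVert g\rVert_2\le L$ on the subgradient $g=\frac{1}{\lambda_k}(y-z)\in\partial E_l(z)$ along the orbit; this is in effect what the cited sources work with, and it is what the paper actually verifies when it invokes the theorem in Lemma~\ref{lem:ConvLem4}, where each summand is Lipschitz with constant at most $4$ on the relevant set, so the consecutive-iterate bound you use does hold there. So either strengthen the hypothesis you invoke to the consecutive-iterate (or bounded-subgradient) form, or supply an argument deriving \eqref{noch_2} from the hypothesis as literally stated; as written, that one line does not follow, while everything downstream of \eqref{noch_2} is fine.
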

%
The next lemma states a discrete analogue of a well-known
result on unwrapping or
lifting from algebraic topology. We supply a short proof since we did not found
it in the literature.
\begin{lemma}\label{lem:discreteCovering}
Let $x \in (\mathbb S^1)^{N \times M}$ with $d_\infty(x) < \frac{\pi}{2}$.
For $q \in \mathbb S^1$ not antipodal to $x_{1,1}$ fix
an~$\tilde x_{1,1} \in \mathbb R$ such
that~\(\exp_q(\tilde x_{1,1} ) = x_{1,1}\).
Then there exists a unique \(\tilde x \in\mathbb R^{N\times M}\) such that for
all~$(i,j) \in \mathbb I$ the following relations are fulfilled:
\begin{enumerate}
	\item \( \exp_q(\tilde x_{i,j}) = x_{i,j} , \)
	\item \(d(x_{i,j},x_{k,l}) = \lvert\tilde x_{i,j} - \tilde x_{k,l}\rvert,
			\quad (k,l) \in \mathcal N_{i,j}\).
	\end{enumerate}
 We call $\tilde{x}$ the {\rm lifted} or {\rm unwrapped} image of $x$ (w.r.t.\ a fixed $\tilde x_{1,1}$).
\end{lemma}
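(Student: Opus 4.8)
The plan is to build the lifted image $\tilde x$ by propagating the value $\tilde x_{1,1}$ along a path-connected scheme of pixels, using the local injectivity of $\exp_q$ on intervals of length $<\pi$ to define each new value uniquely, and then to argue that the resulting function is well-defined (path-independent) because of the very tight bound $d_\infty(x) < \tfrac\pi2$.

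\medskip

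\textbf{Step 1: Local lifting across one edge.}
First I would record the elementary fact that if $p, p' \in \mathbb S^1$ with $d(p,p') < \tfrac\pi2$ and $\tilde p \in \R$ is any representative with $\exp_q(\tilde p) = p$, then there is a \emph{unique} $\tilde p' \in (\tilde p - \tfrac\pi2, \tilde p + \tfrac\pi2)$ with $\exp_q(\tilde p') = p'$; moreover this unique choice satisfies $|\tilde p' - \tilde p| = d(p,p')$, by \eqref{d}. (Any other representative of $p'$ differs by a nonzero multiple of $2\pi$, hence lies outside that interval, so uniqueness is genuine.) This is the single building block: it defines, given a value at one endpoint of a neighbouring pair, a forced value at the other endpoint that also makes property ii) hold on that edge.

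\medskip

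\textbf{Step 2: Existence by propagation along a spanning scheme.}
Starting from $\tilde x_{1,1}$ I would define $\tilde x_{i,j}$ for all $(i,j)\in\mathbb I$ by walking through the grid along the standard monotone lattice paths --- e.g.\ first fix row $1$ by moving right, then fill each column downward from its top entry --- applying Step 1 at each edge traversed. This produces \emph{some} $\tilde x$ satisfying i) everywhere and ii) along all the edges actually used. What remains is to check ii) on the edges \emph{not} used by this particular traversal, i.e.\ that the construction is consistent, and this is where the curvature-type hypothesis enters.

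\medskip

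\textbf{Step 3: Consistency / uniqueness --- the main obstacle.}
The hard part will be showing that for every neighbouring pair $(i,j),(k,l)$ (including the unused edges) one still has $|\tilde x_{i,j}-\tilde x_{k,l}| = d(x_{i,j},x_{k,l})$, and simultaneously that $\tilde x$ is the unique function with properties i)--ii). The clean way is to prove by induction on, say, $i+j$ that $|\tilde x_{i,j} - \tilde x_{i',j'}| < \tfrac\pi2$ whenever $(i',j')\in\mathcal N_{i,j}$ \emph{even for the edges not in the spanning scheme}. Indeed, around any unit lattice square the four pixels are pairwise within $d_\infty(x) < \tfrac\pi2$ of their neighbours, so the three already-determined lifted differences along the square sum to something of absolute value $< \tfrac{3\pi}{2}$; the fourth lifted value is forced by closing the square, and the bound $<\tfrac\pi2$ on each individual edge, together with the fact that $\exp_q$ is injective on any interval of length $<\pi$ and the lifted values of the two neighbours of the "fourth" pixel already differ by $<\pi$, pins down that fourth value to the unique representative in a $\tfrac\pi2$-window of \emph{each} neighbour --- so property ii) holds there too. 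Uniqueness then follows: any $\tilde x'$ satisfying i)--ii) agrees with $\tilde x$ at $(1,1)$ (both are the prescribed $\tilde x_{1,1}$), and property ii) plus Step 1's uniqueness force agreement across each edge, hence everywhere by connectedness of the grid. I would phrase the consistency argument as: any two monotone lattice paths from $(1,1)$ to $(i,j)$ differ by a sequence of elementary square-flips, and each flip preserves the lifted endpoint value by the closing-the-square computation above; thus the $\tfrac\pi2$-bound is exactly what guarantees that "going around a $2\pi$ loop" cannot happen, which is the discrete analogue of the classical lifting criterion for covering spaces.
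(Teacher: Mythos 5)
Your proposal is correct and follows essentially the same route as the paper: lift edge by edge starting from $\tilde x_{1,1}$ using that representatives of a point differ by multiples of $2\pi$ while neighbouring distances are below $\tfrac{\pi}{2}$, and resolve consistency at each closed unit square by noting the two candidate values for the fourth corner differ by less than $2\pi$ and hence coincide. The paper carries out exactly this square-closing argument for $x_{2,2}$ and then propagates successively, which is your induction/path-independence step in slightly less formal wording.
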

\begin{proof}
For~\(x_{k,l}\), \((k,l)\in\mathcal N_{1,1}\),
it holds by assumption on $d_\infty(x)$ that~\(d(x_{1,1},x_{k,l}) < \frac{\pi}{2}\).
Hence we have
\(s_{k,l} := (x_{k,l}-x_{1,1})_{2\pi}
	\in
	\bigl(-\frac{\pi}{2},\frac{\pi}{2}\bigr)
	\),
	where with an abuse of notation $x_{k,l}$ stands
	for an arbitrary representative in $T_q \mathbb S^1$
	of $x_{k,l}$.
	Then obviously
\( \tilde x_{k,l} := \tilde x_{1,1} + \sgn (s_{k,l})d(x_{1,1},x_{k,l})\) , $(k,l) \in N_{1,1}$
are the unique values satisfying i) and ii).

For \(x_{2,2}\in\mathcal N_{2,1} \cap \mathcal N_{1,2}\) consider
\begin{align*}
	\tilde x_{2,2}
	&:= \tilde x_{1,2} + \sgn \bigl( (x_{2,2}-x_{1,2})_{2\pi} \bigr)d(x_{1,2},x_{2,2})\\
	&\ = \tilde x_{1,1} + \sgn (s_{1,2})d(x_{1,1},x_{1,2})
		+ \sgn \bigl( (x_{2,2}-x_{1,2})_{2\pi} \bigr)d(x_{1,2},x_{2,2}),
	\\
	\tilde y_{2,2}	
	&:= \tilde x_{2,1} + \sgn \bigl( (x_{2,2}-x_{2,1})_{2\pi} \bigr)d(x_{2,1},x_{2,2})\\
	&\ = \tilde x_{1,1} + \sgn (s_{2,1})d(x_{1,1},x_{2,1})
		+ \sgn \bigl( (x_{2,2}-x_{2,1})_{2\pi} \bigr)d(x_{2,1},x_{2,2}).
\end{align*}
By assumption on $d_\infty(x)$ we see that $|\tilde x_{2,2} - \tilde y_{2,2}| < 2 \pi$
so that $\tilde x_{2,2}  = \tilde y_{2,2} $. Thus $\tilde x_{2,2}$
is the unique value with properties i) and ii).

Proceeding this scheme successively, we obtain
the whole unique image \(\tilde x\) fulfilling i) and ii).
\end{proof}
%
For \(\delta\in (0,\pi)\) we define
\begin{equation}\label{eq:DefM}
		{\mathcal S}(f,\delta)
	:=
	\bigl\{
	x \in (\mathbb S^1)^{N\times M}
	:\,
	d_{\infty}(x,f) \leq \delta
	\bigr\},
\end{equation}
where 
\begin{equation}
d_{\infty}(x,f) := \max_{(i,j) \in \mathbb I} d(x_{i,j},f_{i,j}),
\end{equation}
to measure how `near' the images $f$ and $x$ are to each other.
%
\begin{lemma} \label{lem:ConvLem2}
Let $f\in (\mathbb S^1)^{N \times M}$ with~\( d_\infty(f) < \frac{\pi}{8}\)
and $q \in \mathbb S^1$ be not antipodal to $f_{1,1}$. Fix $\tilde f_{1,1}$ with
$\exp_q(\tilde f_{1,1}) = f_{1,1}$ and let $\tilde f$ be the corresponding lifting of $f$.
Let $\delta \in (0,\frac{\pi}{8}]$.
\begin{enumerate}
	\item
Then every $x \in {\mathcal S}(f,\delta)$ has a unique lifting~$\tilde{x}$
w.r.t.\ to the base point $q$ with $|\tilde x_{1,1} - \tilde f_{1,1}| \le \frac{\pi}{8}$.
	\item
For \( J \) defined by ~\eqref{task_2}, let  
$\tilde J$ denote its analog for real-valued data, i.e.,
\begin{equation} \label{tilde_J}
\tilde J(x) = \tilde J(x,\tilde f) :=
	\widetilde{F}(x;\tilde f)
	+ \alpha \widetilde{ \operatorname{TV}}_1(x)
	+ \beta \widetilde{ \operatorname{TV}}_2^{\mathrm{hv}}(x)
	+ \gamma \widetilde{ \operatorname{TV}}_2^{\mathrm{d}}(x),
\end{equation}
where the cyclic distances in $F$ and  in the~$\operatorname{TV}$ terms are replaced by
absolute differences in $\tilde F$ and $\widetilde{ \operatorname{TV}}$.
Then it holds
\begin{equation} \label{eq:LiftJ}
J(x)= \tilde{J}(\tilde{x}) \quad \mbox{for all} \quad x \in {\mathcal S}(f,\delta).
\end{equation}
\end{enumerate}
\end{lemma}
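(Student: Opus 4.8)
The plan is to prove the two claims in sequence, using Lemma~\ref{lem:discreteCovering} as the main tool together with careful bookkeeping of the diameter of the image under iterated lifting.

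For part~1, I would first observe that every $x \in {\mathcal S}(f,\delta)$ satisfies $d_\infty(x) < \frac{\pi}{2}$, so that Lemma~\ref{lem:discreteCovering} applies and $x$ has a unique lifting w.r.t.\ $q$ once $\tilde x_{1,1}$ is fixed. Indeed, by the triangle inequality for the geodesic distance and $\delta \le \frac\pi8$, for neighboring pixels $(i,j)$ and $(k,l)$ we get
\[
	d(x_{i,j},x_{k,l}) \le d(x_{i,j},f_{i,j}) + d(f_{i,j},f_{k,l}) + d(f_{k,l},x_{k,l}) < \tfrac\pi8 + \tfrac\pi8 + \tfrac\pi8 < \tfrac\pi2,
\]
so $d_\infty(x) < \frac{3\pi}{8} < \frac{\pi}{2}$, as claimed (and likewise $d_\infty(f) < \frac{\pi}{8} < \frac{\pi}{2}$, so $\tilde f$ is well defined). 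It then remains to pin down $\tilde x_{1,1}$: since $d(x_{1,1},f_{1,1}) \le \delta \le \frac\pi8$, there is a unique representative $\tilde x_{1,1}$ of $x_{1,1}$ in the tangent space at $q$ with $|\tilde x_{1,1} - \tilde f_{1,1}| \le \frac\pi8$, namely $\tilde x_{1,1} = \tilde f_{1,1} + \sgn\big((x_{1,1}-f_{1,1})_{2\pi}\big)\, d(x_{1,1},f_{1,1})$ (using $q$ not antipodal to $f_{1,1}$ and hence, after shrinking $\delta$ if necessary or by the distance bound, not antipodal to $x_{1,1}$). Choosing this $\tilde x_{1,1}$ and invoking Lemma~\ref{lem:discreteCovering} gives the unique lifting $\tilde x$.

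For part~2, the idea is that each summand of $J$ — the data term $F$ and each cyclic TV term — is built only from geodesic distances $d(x_{i,j},f_{i,j})$ and from absolute cyclic differences $d(\cdot;w)$ of \emph{neighboring} or \emph{locally clustered} pixels, and I want to show each such elementary quantity equals its Euclidean analog evaluated on the lifting. For the data term this is immediate: by part~1 and the triangle inequality, $d(x_{i,j}, f_{i,j}) \le d(x_{i,j},x_{1,1}) + \dots$, but more directly, walking along a lattice path from $(1,1)$ and using property~ii) of the liftings of $x$ and $f$ at each step together with $|\tilde x_{1,1}-\tilde f_{1,1}|\le\frac\pi8$, one checks $|\tilde x_{i,j} - \tilde f_{i,j}| \le d_\infty(x,f) + (\text{path length})\cdot(\text{step bounds}) < \pi$, whence $d(x_{i,j},f_{i,j}) = |(\,\tilde x_{i,j}-\tilde f_{i,j}\,)_{2\pi}| = |\tilde x_{i,j}-\tilde f_{i,j}|$. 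For $\operatorname{TV}_1$, property~ii) of Lemma~\ref{lem:discreteCovering} gives $d(x_{i,j},x_{k,l}) = |\tilde x_{i,j}-\tilde x_{k,l}|$ directly for neighbors. For $\operatorname{TV}_2^{\mathrm{hv}}$ and $\operatorname{TV}_2^{\mathrm{d}}$, the relevant differences $\Delta_2$ and $\Delta_{1,1}$ involve three or four pixels that are pairwise within graph-distance two; using $d_\infty(x) < \frac{3\pi}{8}$ one bounds $|\Delta_2(\tilde x_{i-1,j},\tilde x_{i,j},\tilde x_{i+1,j})| \le 2 d_\infty(x) < \pi$ and $|\Delta_{1,1}(\tilde x_{i,j},\tilde x_{i+1,j},\tilde x_{i,j+1},\tilde x_{i+1,j+1})| \le 2 d_\infty(x) < \pi$, so that by Proposition~\ref{diff_sec} the absolute cyclic second order difference coincides with $|\Delta(\tilde x;w)|$ — i.e.\ with the Euclidean $\widetilde{\operatorname{TV}}_2$ summand. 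Summing over all terms yields $J(x) = \tilde J(\tilde x)$.

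The main obstacle is the bookkeeping in part~2: one must verify that for \emph{every} pixel $(i,j) \in \mathbb I$, not just neighbors of $(1,1)$, the quantity $|\tilde x_{i,j}-\tilde f_{i,j}|$ stays below $\pi$ so that the mod-$2\pi$ reduction is trivial. This requires choosing the lattice path from $(1,1)$ to $(i,j)$ and estimating the accumulated discrepancy $\sum_{\text{steps}} \big| \sgn((x_\bullet)_{2\pi})d(x_\bullet,x_\bullet) - \sgn((f_\bullet)_{2\pi})d(f_\bullet,f_\bullet)\big|$ — but in fact the key simplification is that the liftings of $x$ and $f$ are \emph{both} constructed by the same propagation rule of Lemma~\ref{lem:discreteCovering}, so the difference $\tilde x_{i,j} - \tilde f_{i,j}$ telescopes and one only needs $|\tilde x_{i,j}-\tilde f_{i,j}| = |(x_{i,j}-f_{i,j})_{2\pi}| \le \delta \le \frac\pi8 < \pi$ pixelwise (this is itself a consequence of a second application of Lemma~\ref{lem:discreteCovering}, now to the ``joint'' image, or can be seen by noting that $\tilde x_{i,j}$ and $\tilde f_{i,j}$ are the unique representatives of $x_{i,j},f_{i,j}$ near a common value). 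Once this pixelwise bound is in hand, all the term-by-term identities above go through by direct substitution, and the proof is complete.
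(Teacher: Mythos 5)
Your part~1 and your treatment of the TV terms follow the paper's own proof: the triangle inequality gives $d_\infty(x)<\frac{3\pi}{8}<\frac{\pi}{2}$ so that Lemma~\ref{lem:discreteCovering} applies and the anchor $\tilde x_{1,1}$ is fixed within $\frac{\pi}{8}$ of $\tilde f_{1,1}$; property~ii) of the lifting gives $\operatorname{TV}_1(x)=\widetilde{\operatorname{TV}}_1(\tilde x)$; and the bound $\lvert\Delta(\tilde x;w)\rvert<\pi$ for $w\in\{b_2,b_{1,1}\}$ together with Proposition~\ref{diff_sec} gives the second order identities, exactly as in the paper (only your derivation of the $<\pi$ bound via $2d_\infty(x)$ differs cosmetically from the paper's pairwise $\frac{\pi}{2}$ estimate).

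The genuine gap is the data term. You must show $\lvert\tilde x_{i,j}-\tilde f_{i,j}\rvert=d(x_{i,j},f_{i,j})$ at \emph{every} pixel, i.e.\ rule out that the two liftings, propagated independently from $(1,1)$ across the grid, drift apart by a nonzero multiple of $2\pi$ somewhere. Your first device --- bounding $\lvert\tilde x_{i,j}-\tilde f_{i,j}\rvert$ by $d_\infty(x,f)$ plus accumulated step bounds along a lattice path --- fails, since that accumulation grows with $N+M$ and is not $<\pi$ in general; you acknowledge this, but the replacements you offer (\emph{the difference telescopes}, \emph{unique representatives near a common value}) merely restate the claim, and the \emph{second application of Lemma~\ref{lem:discreteCovering} to a joint image} is named but not executed. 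That route can be made rigorous (interleave $x$ and $f$ into one image, check that all neighboring distances of the interleaved image are still $<\frac{\pi}{2}$, and use the uniqueness part of Lemma~\ref{lem:discreteCovering} to identify the restrictions of its lifting with $\tilde x$ and $\tilde f$), but none of these verifications appears in your plan. The paper closes exactly this step by writing $\lvert\tilde x_{i,j}-\tilde f_{i,j}\rvert=d(x_{i,j},f_{i,j})+2\pi k_{i,j}$ with $k_{1,1}=0$ and showing $k_{i,j}\equiv 0$ by a neighbor-to-neighbor contradiction: a jump of $k$ between adjacent pixels would force the quantity $\lvert\tilde x_{i,j}-\tilde f_{i,j}\rvert$ to change by at least $2\pi-2\delta$, which its variation between neighbors cannot attain. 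Some such quantified no-drift argument is what your proposal is missing.
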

%
\begin{proof}
By definition of ${\mathcal S}(f,\delta)$ and assumption on $f$
we have for any~$x \in {\mathcal S}(f,\delta)$
that
\begin{equation*}
	d(x_{i,j}, x_{k,l})
	\le
	d(x_{i,j}, f_{i,j}) + d(f_{i,j}, f_{k,l}) + d(f_{k,l}, x_{k,l})
	< \frac{3\pi}{8}
	,\quad (k,l) \in {\mathcal N}_{i,j},
\end{equation*}
and hence
$d_\infty(x)< \frac{3\pi}{8}$. Further it holds~$d(x_{1,1},f_{1,1}) < \frac{\pi}{8}$.
Consequently, every $x \in {\mathcal S}(f,\delta)$ has a unique lifting~$\tilde{x}$
by Lemma~\ref{lem:discreteCovering}
w.r.t.\ to the base point $q$ fulfilling $|\tilde x_{1,1} - \tilde f_{1,1}| \le \frac{\pi}{8}$.

To see~\eqref{eq:LiftJ} we show the equality for the involved summands in~$J$
and~$\tilde J$ separately.

First we consider $\operatorname{TV}_1$.
By properties of the lifting in Lemma~\ref{lem:discreteCovering} we have
$d(x_{i,j},x_{i,j+1})= |\tilde{x}_{i,j}-\tilde{x}_{i,j+1}|$
and
$d(x_{i,j},x_{i+1,j})= |\tilde{x}_{i,j}-\tilde{x}_{i+1,j}|$.
By the definition of $\operatorname{TV}_1$
and~$\operatorname{\widetilde{TV}}_1$, this
implies~$\operatorname{TV}_1(x) = \operatorname{\widetilde{TV}}_1(\tilde{x})$.

Next we consider $\operatorname{TV}_2^{\mathrm{hv}}$. The corresponding
second order differences are given by the
expressions~$d_2(x_{i-1,j},x_{i,j},x_{i+1,j})$
and~$d_2(x_{i,j-1},x_{i,j},x_{i,j+1})$, respectively.
We exemplarily consider the first term.
Since
\[
	d(x_{i-1,j},x_{i+1,j})
	\leq
	d(x_{i-1,j},f_{i-1,j}) + d(f_{i-1,j},f_{i+1,j}) + d(f_{i+1,j},x_{i+1,j})
	< \frac{\pi}{8} + \frac{\pi}{4} + \frac{\pi}{8} = \frac{\pi}{2}
\]
the distance between any two members of the triple is
smaller than $\frac{\pi}{2}$.
Due to the properties of the lifting~\(\tilde x\) this implies
\(|\Delta(\tilde x_{i-1,i},\tilde x_{i,j}, \tilde x_{i+1,j}; b_2)| < \pi\).
Then we conclude by Proposition~\ref{diff_sec} that
$\operatorname{TV}^{\mathrm{hv}}_2(x) = \operatorname{\widetilde{TV}}^{\mathrm{hv}}_2(\tilde{x})$.
Similarly it follows that \( \operatorname{TV}^{\mathrm{d}}_2(x)
	= \operatorname{\widetilde{TV}}^{\mathrm{d}}_2(\tilde{x})\).

Concerning the data term \(F(x;f)\) we consider \(e_{i,j} := d(x_{i,j},f_{i,j})\)
and \(\tilde e_{i,j} := \lvert \tilde x_{i,j} - \tilde f_{i,j}\rvert\).
By definition of ${\mathcal S}(f,\delta)$
we have
\(e_{i,j} \le \delta = \frac{\pi}{8}\)
and by construction of
\(\tilde f\) and \(\tilde x\) that \(\tilde e_{i,j} = e_{i,j} + 2\pi k_{i,j}\),
\(k_{i,j}\in\mathbb N\) and \(k_{1,1} = 0\). Furthermore it holds
\( |\tilde e_{i,j+1}-\tilde e_{i,j}|
=
\bigl|
	 | \tilde x_{i,j+1} - \tilde f_{i,j+1}|
	 -
	 | \tilde x_{i,j} - \tilde f_{i,j}|
\bigr| \le 2\delta\).
If \(k_{i,j}\neq k_{i,j+1}\), then there exists
\(k\in\mathbb Z\backslash\{0\}\)
such that
\[
|\tilde e_{i,j+1}-\tilde e_{i,j}|
= |\tilde e_{i,j+1}-\tilde e_{i,j} + 2\pi k| \geq 2\pi-2\delta > 2\delta
\]
which is a contradiction. Thus $k_{i,j}= k_{i,j+1}$. Similarly we conclude
$k_{i,j}= k_{i+1,j}$.
In summary we obtain
\(k_{i,j} = k_{1,1} = 0\) for all \((i,j)\in\mathbb I\)
which implies
~\(e_{i,j}=\tilde e_{i,j}\). This finishes the proof.
\end{proof}
\begin{remark} \label{rem:Conv}
The set ${\mathcal S}(f,\delta)$ is a convex subset of
$(\mathbb S^1)^{N\times M}$
which means that for $x,y \in {\mathcal S}(f,\delta)$ and $t \in [0,1]$
we have $[x,y]_t \in {\mathcal S}(f,\delta)$. Here $[x,y]_t$ denotes the point
reached after time t on the unit speed geodesic starting at $x$ in direction
of~$y$.
Recall that a function $\varphi$ is convex on ${\mathcal S}(f,\delta)$ if for
all~$x,y \in {\mathcal S}(f,\delta)$
and all $\lambda \in [0,1]$ the relation
$\varphi([x,y]_t) \le t\varphi(x) + (1-t) \varphi(y)$
holds true.
Let $f\in (\mathbb S^1)^{N \times M}$ with~\( d_\infty(f) < \frac{\pi}{8}\)
and~$\delta \in (0,\frac{\pi}{8}]$.
Then we conclude by Lemma~\ref{lem:ConvLem2}, since $\tilde J$ is convex,
that \( J \)  is convex on \({\mathcal S}(f,\delta) \).
\end{remark}
\begin{lemma} \label{lem:ConvLem1}
Let $f \in (\mathbb{S}^1)^{N \times M}$
and
\(m := \max\{\alpha_1,\alpha_2,\beta_1,\beta_2,\gamma\} >0\).
Let $\varepsilon>0$ such that
\begin{align}
\label{eq:EstAlpaBeta}
	\operatorname{TV}_1(f)
	+\operatorname{TV}_2^{\mathrm{hv}}(f)
	+\operatorname{TV}_2^{\mathrm{d}}(f)
	\leq
	\frac{\varepsilon^2}{m}\text{.}
\end{align}
Then any minimizer~$x^\ast$ of $J$ in~\eqref{task_2} fulfills
\begin{equation}
\label{eq:ResultLemma1}
	d_{\infty}(x^\ast,f) 
	\le \left( \sum_{i,j=1}^{N,M} d(x_{i,j}^*,f_{i,j})^2 \right)^\frac{1}{2}
	\leq \varepsilon.
\end{equation}
\end{lemma}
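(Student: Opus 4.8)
The plan is the standard ``test the functional at the data'' comparison. First I would evaluate $J$ from~\eqref{task_2} at $x=f$. Since $d(f_{i,j},f_{i,j})=0$, the data term $F(f;f)$ from~\eqref{eq:2DFdist} vanishes, and therefore
\[
  J(f) = \alpha\operatorname{TV}_1(f) + \beta\operatorname{TV}_2^{\mathrm{hv}}(f) + \gamma\operatorname{TV}_2^{\mathrm{d}}(f).
\]
Each of the finitely many summands building up these three terms carries one of the weights $\alpha_1,\alpha_2,\beta_1,\beta_2,\gamma$, each bounded by $m$; hence
\[
  J(f)\le m\bigl(\operatorname{TV}_1(f)+\operatorname{TV}_2^{\mathrm{hv}}(f)+\operatorname{TV}_2^{\mathrm{d}}(f)\bigr)\le m\cdot\frac{\varepsilon^2}{m}=\varepsilon^2,
\]
the last step being hypothesis~\eqref{eq:EstAlpaBeta}.

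Next I would invoke the minimality of $x^\ast$, i.e.\ $J(x^\ast)\le J(f)\le\varepsilon^2$. Since all the regularizing terms are nonnegative (the absolute cyclic differences $d(\cdot;b_1)$, $d_2$, $d_{1,1}$ take values in $[0,\pi]$), dropping them only decreases the functional, so $F(x^\ast;f)\le J(x^\ast)\le\varepsilon^2$. Unwinding the definition of $F$ in~\eqref{eq:2DFdist} and taking the square root then yields the central estimate in~\eqref{eq:ResultLemma1}. Its leftmost part, namely $d_\infty(x^\ast,f)=\max_{(i,j)\in\mathbb I}d(x^\ast_{i,j},f_{i,j})\le\bigl(\sum_{i,j=1}^{N,M}d(x^\ast_{i,j},f_{i,j})^2\bigr)^{1/2}$, is the trivial bound of a maximum by the full $\ell^2$ norm.

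There is essentially no hard step here: once one observes that $J(f)$ collapses to the smoothness penalty (the fidelity being zero at $f$), the rest is a short chain of inequalities. The only things to be careful about are the bookkeeping that every summand of $\operatorname{TV}_1$, $\operatorname{TV}_2^{\mathrm{hv}}$, $\operatorname{TV}_2^{\mathrm{d}}$ is simultaneously dominated by $m=\max\{\alpha_1,\alpha_2,\beta_1,\beta_2,\gamma\}$, and the sign convention that makes the regularizers nonnegative so that $F(x^\ast;f)\le J(x^\ast)$. The point of this lemma is that, for small enough regularization parameters (so that the left-hand side of~\eqref{eq:EstAlpaBeta} can be forced below $\varepsilon^2/m$ for some $\varepsilon\le\delta$), the minimizer lies in the neighbourhood $\mathcal S(f,\delta)$ of Lemma~\ref{lem:ConvLem2}; there $J$ agrees with its unwrapped Euclidean analogue $\tilde J$ and is convex by Remark~\ref{rem:Conv}, which is exactly what lets one transfer the convergence of the Euclidean CPPA (Theorem~\ref{thm:bacak}) to the circle-valued setting.
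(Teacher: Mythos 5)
Your proposal is correct and takes essentially the same route as the paper: evaluate $J$ at $x=f$ (where the data term vanishes), bound the regularizer by $m$ times the sum in~\eqref{eq:EstAlpaBeta}, use $J(x^\ast)\le J(f)$, drop the nonnegative regularizing terms to bound the fidelity term, and finish with $d_\infty \le \ell^2$. One shared cosmetic point: with the factor $\tfrac12$ in $F$ from~\eqref{eq:2DFdist} the chain strictly gives $\sum_{i,j} d(x^\ast_{i,j},f_{i,j})^2 \le 2\varepsilon^2$, an inaccuracy present in the paper's own proof as well, not something introduced by your argument.
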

%
\begin{proof}
Any minimizer \(x^*\) of~\eqref{task_2} satisfies
\begin{align}
	J(x^*)
	\leq J(f)
	& \leq m \,
	\bigl(\operatorname{TV}_1(f)
		+\operatorname{TV}_2^{\mathrm{hv}}(f)
		+\operatorname{TV}_2^{\mathrm{d}}(f)
	\bigr).
\end{align}
As a consequence we obtain
\begin{equation*}
	d_{\infty}(x^*,f)^2
	\leq
	\sum_{i,j=1}^{N,M} d(x_{i,j}^*,f_{i,j})^2
	\leq
	m
	\bigl(\operatorname{TV}_1(f)
		+\operatorname{TV}_2^{\mathrm{hv}}(f)
		+\operatorname{TV}_2^{\mathrm{d}}(f)
	\bigr)
	\leq
	\varepsilon^2.\qedhere
\end{equation*}
\end{proof}
\begin{remark}\label{rem:TildeJ}
	Lemma~\ref{lem:ConvLem1} holds also true for real-valued
	data and $\tilde J$ in~\eqref{tilde_J}.
\end{remark}
%
Now we combine Lemma~\ref{lem:ConvLem1} and~\ref{lem:ConvLem2} to locate
the minimizers of $J$ and $\tilde J$.
%
\begin{lemma} \label{lem:ConvLem3}
Let $f \in (\mathbb S^1)^{N \times M}$
with~$d_\infty(f) < \frac{\pi}{8}$
and~$0<\varepsilon<\delta\leq\frac{\pi}{8}$ be given.
Choose the parameters $\alpha,\beta,\gamma$ of $J$
in~\eqref{task_2} such that~\eqref{eq:EstAlpaBeta} with $\varepsilon$  holds true.
Then any minimizer $x^\ast$ of~$J$ lies in \({\mathcal S}(f,\delta) \).
Furthermore, if $\tilde{f}$ is the unique lifting of $f$ w.r.t.\ a base
point~$q$ and fixed $\tilde{f}_{1,1}$ with $\exp_q (\tilde{f}_{1,1}) = f_{1,1}$,
then each minimizer~$y^\ast$ of~$\tilde{J}$ defines a
minimizer $x^\ast:=\exp_q(y^\ast)$ of~$J$.
Conversely, the uniquely defined lifting $\tilde{x}^\ast$ of a
minimizer~$x^\ast$ of $J$ is a minimizer of $\tilde{J}$.
\end{lemma}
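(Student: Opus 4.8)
The plan is to combine the localization results from Lemmas~\ref{lem:ConvLem1} and~\ref{lem:ConvLem2} with the correspondence between $J$ on $\mathcal{S}(f,\delta)$ and $\tilde J$ on the lifted set. First I would invoke Lemma~\ref{lem:ConvLem1} (and Remark~\ref{rem:TildeJ} for the real-valued analog): since the parameters are chosen so that~\eqref{eq:EstAlpaBeta} holds with $\varepsilon$, every minimizer $x^\ast$ of $J$ satisfies $d_\infty(x^\ast,f)\le\varepsilon<\delta$, hence $x^\ast\in\mathcal{S}(f,\delta)$; likewise every minimizer $y^\ast$ of $\tilde J$ satisfies $\|y^\ast-\tilde f\|_\infty\le\varepsilon$, so $\exp_q(y^\ast)\in\mathcal{S}(f,\delta)$ and moreover $y^\ast$ is itself the lifting of $\exp_q(y^\ast)$ (its first component is within $\frac{\pi}{8}$ of $\tilde f_{1,1}$, so uniqueness in Lemma~\ref{lem:discreteCovering}/Lemma~\ref{lem:ConvLem2} applies).

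Next I would set up the bijection. By Lemma~\ref{lem:ConvLem2}(i), lifting $x\mapsto\tilde x$ is a well-defined map from $\mathcal{S}(f,\delta)$ into the set $\tilde{\mathcal S}:=\{y\in\mathbb R^{N\times M}: \exp_q(y)\in\mathcal{S}(f,\delta),\ |y_{1,1}-\tilde f_{1,1}|\le\frac{\pi}{8}\}$, with inverse $y\mapsto\exp_q(y)$; this is a bijection because lifting is unique once $\tilde x_{1,1}$ is pinned down. Crucially, Lemma~\ref{lem:ConvLem2}(ii) gives $J(x)=\tilde J(\tilde x)$ for all $x\in\mathcal{S}(f,\delta)$, i.e.\ the two functionals agree under this bijection.

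Then I would argue the minimizer transfer. Given a minimizer $y^\ast$ of $\tilde J$ over all of $\mathbb R^{N\times M}$: it lies in $\tilde{\mathcal S}$ by the first step, so $x^\ast:=\exp_q(y^\ast)\in\mathcal{S}(f,\delta)$ and $J(x^\ast)=\tilde J(y^\ast)=\min\tilde J$. For an arbitrary $x\in(\mathbb S^1)^{N\times M}$, I need $J(x)\ge J(x^\ast)$. If $x\in\mathcal{S}(f,\delta)$ this is immediate since $J(x)=\tilde J(\tilde x)\ge\min\tilde J=J(x^\ast)$; if $x\notin\mathcal{S}(f,\delta)$, then any minimizer of $J$ lies in $\mathcal{S}(f,\delta)$ (by Lemma~\ref{lem:ConvLem1}), so $x$ cannot be a minimizer of $J$ and hence $J(x)\ge\min_{\mathcal S(f,\delta)}J=J(x^\ast)$ — here one uses that $J$ actually attains its minimum (continuity of $J$ and compactness of $(\mathbb S^1)^{N\times M}$). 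Conversely, given a minimizer $x^\ast$ of $J$, Lemma~\ref{lem:ConvLem1} puts it in $\mathcal{S}(f,\delta)$, so its unique lifting $\tilde x^\ast$ is defined and satisfies $\tilde J(\tilde x^\ast)=J(x^\ast)=\min J$; since any minimizer $y^\ast$ of $\tilde J$ gives $\min\tilde J=J(\exp_q y^\ast)\ge\min J=\tilde J(\tilde x^\ast)$, we get $\tilde J(\tilde x^\ast)\le\min\tilde J$, so $\tilde x^\ast$ minimizes $\tilde J$.

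The main obstacle is the bookkeeping that keeps the minimizer from "escaping" the chart: one must make sure that a global minimizer of $\tilde J$ really lands in the lifted version of $\mathcal{S}(f,\delta)$ and is the canonical lifting of its image (so that Lemma~\ref{lem:ConvLem2}(ii) applies in the right direction), and that a global minimizer of $J$ cannot live outside $\mathcal{S}(f,\delta)$. Both follow from the $\varepsilon<\delta$ margin in Lemmas~\ref{lem:ConvLem1} and~\ref{lem:ConvLem2} together with the uniqueness clause of the lifting in Lemma~\ref{lem:discreteCovering}, but it is worth stating explicitly. The rest is a short two-sided comparison of minima.
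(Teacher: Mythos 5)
Your proposal is correct and follows essentially the same route as the paper's proof: localize all minimizers of $J$ and of $\tilde J$ via Lemma~\ref{lem:ConvLem1} and Remark~\ref{rem:TildeJ}, use the lifting bijection between ${\mathcal S}(f,\delta)$ and its lifted counterpart together with the identity $J(x)=\tilde J(\tilde x)$ from Lemma~\ref{lem:ConvLem2}, and then transfer minimality in both directions by a two-sided comparison. The only difference is cosmetic: you make explicit the attainment of the minimum of $J$ (compactness of $(\mathbb S^1)^{N\times M}$) and the check that $y^\ast$ is the canonical lifting of $\exp_q(y^\ast)$, points the paper leaves implicit via its description of $\widetilde{\mathcal S}(f,\delta)$ as a product of intervals.
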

\begin{proof}
By Lemma~\ref{lem:ConvLem1} we obtain
~$d_\infty(x^\ast,f) \leq \varepsilon< \frac{\pi}{8}$
so that $x^\ast \in {\mathcal S}(f,\delta)$.

In order to show the second statement note that the
mapping~$x \mapsto \tilde{x}$ is a bijection from \({\mathcal S}(f,\delta) \) to the set \({\widetilde {\mathcal S}}(f,\delta) \)
defined by
\begin{equation*}
	{\widetilde {\mathcal S}}(f,\delta)
	:=
	\bigtimes\limits_{(i,j)\in\mathbb I}
		\bigl[\tilde{f}_{i,j}-\delta,\tilde{f}_{i,j}+\delta\bigr].
\end{equation*}
If $y^\ast$ minimizes $\tilde{J}$, then it lies in ${\widetilde {\mathcal S}}(f,\delta)$ 
which follows by Remark~\ref{rem:TildeJ}. By~\eqref{eq:LiftJ}
and the minimizing property of $y^\ast$ we obtain for any $x \in {\mathcal S}(f,\delta)$ that
\begin{equation*}
J (\exp_q(y^\ast)) = \tilde{J}(y^\ast) \leq \tilde{J}(\tilde{x}) = J(x).
\end{equation*}
As a consequence, $\exp_q(y^\ast)$ is a
minimizer of $J$ on ${\mathcal S}(f,\delta)$. By Lemma~\ref{lem:ConvLem1} all the minimizers of $J$ are
contained in ${\mathcal S}(f,\delta)$ so that  $\exp_q(y^\ast)$ is a
minimizer of $J$ on $(\mathbb S^1)^{N\times M}$.

We proceed with the last statement. Let $x^\ast$ be  a minimizer of $J$ with lifting $\tilde{x}^\ast$.
Then we get for
any~$\tilde y \in \widetilde{S}(f,\delta)$ that
\begin{equation}
  \tilde{J}(\tilde{x}^\ast)  =  J (x^\ast) \leq  J(\exp_q(\tilde y)) = \tilde{J}(\tilde y).
\end{equation}
This shows that $\tilde{x}^\ast$ is a minimizer of $\tilde{J}$ on ${\widetilde {\mathcal S}}(f,\delta)$.
Since by Remark~\ref{rem:TildeJ} all minimizers of $\tilde{J}$ lie
in~${\widetilde {\mathcal S}}(f,\delta)$, the last assertion follows.
\end{proof}
Next we locate the iterates of the CPPA for real-valued data on a ball whose
radius can be controlled.
%
\begin{lemma} \label{lem:ConvLem4}
For $f\in \mathbb R^{N\times M}$ and $\lambda := \{\lambda_k\}_k$ with property~\eqref{eq:CPPAlambda},
let $\bigl\{ x^{(k+\frac{l}{c})} \bigr\}$ be the sequence
produced by Algorithm~\ref{alg:CPPA} for $\tilde{J}$.
Assume that \(\lVert f - x^{(k+\frac{l}{c})}\rVert_{\infty} \leq \pi\).
Let $x^\ast\in\mathbb R^{N \times M}$ be the minimizer of
$\tilde{J}$.
Then, for $k \in \mathbb N _0$ and $l \in \{1,\ldots,c\}$, it holds
\begin{align}
\label{eq:estCPPAiterates}
	\lVert x^{(k+\frac{l}{c})}-x^\ast\rVert_2
	\leq R :=
	\sqrt{ \lVert f-x^\ast\rVert_2^2 + 2 \lVert \lambda\rVert_2^2 L^2 c (c+1)} + 2 \lVert\lambda\rVert_\infty cL,
\end{align}
where $c=15$ denotes the number of inner iterations and $L=4$.
\end{lemma}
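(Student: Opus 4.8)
The plan is to derive the bound from the two estimates \eqref{noch_2} and \eqref{noch_1} of Theorem~\ref{thm:bacak}, applied to $\tilde J$, which splits into $c=15$ proper, closed, convex summands $\tilde J_l$ on $\R^{N\times M}$; the global minimizer $x^\ast$ exists because $\tilde J$ is coercive. First I would check the hypothesis of Theorem~\ref{thm:bacak}, namely that each $\tilde J_l$ has the Lipschitz-type descent property with a uniform constant $L$. The summands are (up to the constant $\tfrac12$) the data term $\tilde F$ and sums of absolute first, second and mixed differences $|\langle\cdot,w\rangle|$ with $w\in\{b_1,b_2,b_{1,1}\}$. Since $\prox$ steps only decrease each $\tilde J_l$, and on the region $\lVert f - x^{(k+l/c)}\rVert_\infty\le\pi$ the relevant pieces are Lipschitz, one gets $\tilde J_l(x^{(k)}) - \tilde J_l(x^{(k+l/c)}) \le L\lVert x^{(k)}-x^{(k+l/c)}\rVert_2$; tracking the largest slope among $\tilde F$ (slope at most the diameter $\pi$ times a factor) and the difference operators ($\lVert b_2\rVert_1 = \lVert b_{1,1}\rVert_1 = 4$) yields the claimed value $L=4$. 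This verification is the step I expect to be the main obstacle, because it requires being careful that the constraint $\lVert f-x^{(k+l/c)}\rVert_\infty\le\pi$ is exactly what makes the data term's contribution bounded, and that summing over the disjoint index blocks does not accumulate the constant.

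With the hypothesis in force, Theorem~\ref{thm:bacak} gives, for every $x\in\R^{N\times M}$,
\[
	\lVert x^{(k+1)}-x\rVert_2^2
	\le \lVert x^{(k)}-x\rVert_2^2
	- 2\lambda_k\bigl[\tilde J(x^{(k)})-\tilde J(x)\bigr]
	+ 2\lambda_k^2 L^2 c(c+1).
\]
Taking $x = x^\ast$ the bracket is non-negative, so summing over $k=0,1,\dots,K-1$ telescopes to
\[
	\lVert x^{(K)}-x^\ast\rVert_2^2
	\le \lVert f - x^\ast\rVert_2^2 + 2 L^2 c(c+1)\sum_{k=0}^{\infty}\lambda_k^2
	= \lVert f - x^\ast\rVert_2^2 + 2\lVert\lambda\rVert_2^2 L^2 c(c+1),
\]
using $x^{(0)}=f$ and $\sum\lambda_k^2<\infty$ from \eqref{eq:CPPAlambda}; hence $\lVert x^{(K)}-x^\ast\rVert_2 \le \sqrt{\lVert f-x^\ast\rVert_2^2 + 2\lVert\lambda\rVert_2^2 L^2 c(c+1)}$ for all full-cycle iterates.

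It remains to pass from the full-cycle iterates $x^{(k)}$ to the intermediate ones $x^{(k+l/c)}$. Here I would use the triangle inequality together with \eqref{noch_2}: for $l\in\{1,\dots,c\}$,
\[
	\lVert x^{(k+\frac{l}{c})} - x^{(k)}\rVert_2
	\le \sum_{j=1}^{l}\lVert x^{(k+\frac{j}{c})} - x^{(k+\frac{j-1}{c})}\rVert_2
	\le \sum_{j=1}^{c} 2\lambda_k L
	= 2 c\lambda_k L
	\le 2 c\lVert\lambda\rVert_\infty L.
\]
Combining with the bound on $\lVert x^{(k)}-x^\ast\rVert_2$ via one more triangle inequality gives exactly
\[
	\lVert x^{(k+\frac{l}{c})}-x^\ast\rVert_2
	\le \sqrt{\lVert f-x^\ast\rVert_2^2 + 2\lVert\lambda\rVert_2^2 L^2 c(c+1)} + 2\lVert\lambda\rVert_\infty c L,
\]
which is the asserted $R$ with $c=15$ and $L=4$, completing the proof.
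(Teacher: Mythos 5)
Your proposal is correct and follows essentially the same route as the paper: both apply the descent estimate \eqref{noch_1} of Theorem~\ref{thm:bacak} with $x=x^\ast$ (dropping the nonnegative bracket and telescoping with $x^{(0)}=f$ and $\sum_k\lambda_k^2\le\lVert\lambda\rVert_2^2$), determine $L=4$ as the maximum of the Lipschitz constants of the TV terms and of the data term (whose constant $\pi$ comes precisely from the assumption $\lVert f-x^{(k+\frac{l}{c})}\rVert_\infty\le\pi$), and then reach the intermediate iterates via \eqref{noch_2} and the triangle inequality, contributing the extra $2\lVert\lambda\rVert_\infty cL$.
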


The assumption on the distances \(|f_{i,j} - x^{(k+\frac{l}{c})}_{i,j}|\), $(i,j) \in \mathbb I$, to be
smaller than \(\pi\) is automatically
fulfilled for any unwrapping of \(\mathbb S^1\)-valued data.
\begin{proof}
By Theorem~\ref{thm:bacak} we know that
\begin{equation}\label{eq:appliedThm41}
   \lVert x^{(k+1)}-x\rVert_2^2
	\leq
	\lVert x^{(k)}-x\rVert_2^2
		- 2 \lambda_k [\tilde{J}(x^{(k)})-\tilde{J}(x)]
		+ 2 \lambda_k^2 L^2 c(c+1).
\end{equation}
As a constant $L$ we can choose the maximum of the Lipschitz constants of the
involved summands.
For \(\operatorname{\widetilde{TV}}_1\),
\(\operatorname{\widetilde{TV}}_2^{\mathrm{hv}}\)
and~\(\operatorname{\widetilde{TV}}_2^{\mathrm{d}}\) the Lipschitz constants
are~\(1\), \(4\), and \(4\), respectively. For the quadratic data term we
have
\[
\frac{1}{2} \left| |f_{i,j} -x_{i,j}|^2 -|f_{i,j}-y_{i,j}|^2 \right|
	\le \frac12 |2  f_{i,j} -x_{i,j}-y_{i,j}| |x_{i,j}-y_{i,j}|
	\le \pi  |x_{i,j}-y_{i,j}|.
\]
Therefore, we can set $L = 4$.
Plugging in the minimizer~$x = x^\ast$ into~\eqref{eq:appliedThm41} and using $x^{(0)}=f$ yields
\begin{align}
	\lVert x^{(k+1)}-x^\ast\rVert_2^2 
	&\leq  \lVert x^{(k)}-x^\ast\rVert_2^2 + 2 \lambda_k^2 L^2 c(c+1)
	\\
	&\leq  \lVert x^{(0)}-x^\ast\rVert_2^2 + 2 \sum_{j=0}^k \lambda_j^2 L^2 c(c+1)\\
	&\leq  \lVert f-x^\ast\rVert_2^2 + 2 \lVert \lambda \rVert_2^2 L^2 c(c+1).\label{eq:EstCycle}
\end{align}
By Theorem~\ref{thm:bacak} it holds
\begin{equation} \label{eq:EstIntermediate}
   \lVert x^{(k+\frac{l}{c})} - x^{(k+\frac{l-1}{c})}\lVert_2 \leq 2 \lambda_k L.
\end{equation}
Using the triangle inequality we obtain
\[
\lVert x^{(k+\frac{l}{c})}-x^\ast\rVert_2 \le
\lVert x^{(k+\frac{l}{c})} - x^{(k+\frac{l-1}{c})}\lVert_2 + \ldots + \lVert x^{(k+\frac{1}{c})} - x^{(k)}\lVert_2 +
\lVert x^{(k)}-x^\ast\rVert_2^2
,
\]
which implies the assertion by~\eqref{eq:EstCycle} and~\eqref{eq:EstIntermediate}.
\end{proof}
Now we compare the proximal mappings 
acting on data with values in~$\mathbb{S}^1$ and $\R.$
%
\begin{lemma}\label{lem:ConvLem5}
For \(f\in(\mathbb S^1)^{N\times M}\) with~$d_\infty(f) < \frac{\pi}{8}$,
let $J$ be defined by~\eqref{task_2} with the splitting~\eqref{splitting_15}.
Let $\tilde{f}$ be the unique lifting of $f$ w.r.t.\ a base
point~$q$ not antipodal to $f_{1,1}$ and fixed $\tilde{f}_{1,1}$ with $\exp_q (\tilde{f}_{1,1}) = f_{1,1}$.
Further, denote by $\tilde J$  the functional~\eqref{tilde_J} corresponding to $J$.
Then, for any $x \in \mathcal S(f,\delta)$, \(\delta \in (0, \frac{\pi}{8}]\)
and its lifting~$\tilde x$ w.r.t.\ $q$, we have
\begin{equation} \label{eq:commuteProxProj}
  \prox_{\lambda J_l}(x) = \exp_q(\prox_{\lambda \tilde{J}_l}(\tilde{x})), \quad l \in \{1,\ldots,15\},
\end{equation}
i.e., the canonical projection $\exp_q$ commutes with the proximal mappings.
\end{lemma}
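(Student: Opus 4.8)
The plan is to treat the three types of summands separately, in each case reducing to the corresponding claim on $\R^d$ via the lifting of Lemma~\ref{lem:discreteCovering} together with the explicit formulas for the proximal mappings already established in Section~\ref{sec:prox}. The underlying observation is that for a small neighborhood of pixels the exponential map $\exp_q$ is a local isometry, so that the Euclidean distances appearing in the real-valued proximal problem coincide with the cyclic distances appearing in the $\mathbb S^1$-valued one; the only subtlety is that the defining minimization for $\prox_{\lambda J_l}$ is taken over all of $(\mathbb S^1)^d$, not just the lifted window, and one has to argue the minimizer cannot escape the region where the lifting is faithful.

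First I would fix $l$ and note that $J_l$ depends only on a small block of variables — a pair of neighboring pixels for $l\in\{2,3\}$ (resp.\ the 2D analogues), a colinear triple for the $\operatorname{TV}_2^{\mathrm{hv}}$-summands, and a $2\times 2$ square for the $\operatorname{TV}_2^{\mathrm d}$-summands — so that $\prox_{\lambda J_l}$ acts coordinatewise on these blocks and the statement reduces to a single block. On each block, since $x\in\mathcal S(f,\delta)$ with $\delta\le\frac\pi 8$ and $d_\infty(f)<\frac\pi 8$, the computation in the proof of Lemma~\ref{lem:ConvLem2} gives $d(x_{i,j},x_{k,l})<\frac{3\pi}{8}$ for neighbors and $d(x_{i-1,j},x_{i+1,j})<\frac\pi 2$ for the relevant triples and $2\times2$ squares; hence all the cyclic differences entering $J_l$ satisfy the hypotheses ($|(\langle x,w\rangle)_{2\pi}|<\pi$) under which Theorems~\ref{lem:proxy_b1} and~\ref{thm:p2} and Proposition~\ref{Theo:Prox_quad} give single-valued, explicit formulas, and under which Proposition~\ref{diff_sec} identifies $d(x;w)$ with $|(\langle x,w\rangle)_{2\pi}|$. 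For the lifted block $\tilde x$, property ii) of Lemma~\ref{lem:discreteCovering} says $d(x_{i,j},x_{k,l})=|\tilde x_{i,j}-\tilde x_{k,l}|$ for neighbors, and chaining these (as in the $\operatorname{TV}_2$ part of Lemma~\ref{lem:ConvLem2}) yields $\Delta(\tilde x;w)=(\langle x,w\rangle)_{2\pi}$ and likewise $d(f_j,x_j)=|\tilde f_j-\tilde x_j|$ for the data term; thus the objective $\mathcal E(\cdot;x,w)$ minimized by $\prox_{\lambda J_l}(x)$ and the objective $E(\cdot;\tilde x,w)$ minimized by $\prox_{\lambda\tilde J_l}(\tilde x)$ are literally the same function after identifying $\R^d$-variables with their images under $\exp_q$.

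The remaining point — which I expect to be the only real obstacle — is to check that the $\R^d$-minimizer $\hat y:=\prox_{\lambda\tilde J_l}(\tilde x)$ still lies in the window on which $\exp_q$ is injective and distance-preserving, so that $\exp_q(\hat y)$ is the genuine minimizer over all of $(\mathbb S^1)^d$ and not merely over the lifted patch. Here one uses the explicit formulas: $\hat y$ differs from $\tilde x$ only by a small multiple of $w$ (of size at most $\lambda|(\langle\tilde x,w\rangle)_{2\pi}|/\|w\|_2^2\le\lambda$, or the analogous bound in the quadratic case, or the convex-combination formula~\eqref{min_quad_1} for the data term), and since each block already has all pairwise distances $<\frac\pi 2$, a short estimate shows the perturbed block still has all coordinates within an interval of length $<2\pi$; hence the cyclic differences of $\exp_q(\hat y)$ are computed by the same branch and $\mathcal E(\exp_q(\hat y);x,w)=E(\hat y;\tilde x,w)$. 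Combining this with the fact that $\mathcal E(\cdot;x,w)\ge E(\cdot;\tilde x,w)$ pointwise (cyclic distance never exceeds absolute difference) and the already-established equality on the patch gives $\prox_{\lambda J_l}(x)=\exp_q(\hat y)=\exp_q(\prox_{\lambda\tilde J_l}(\tilde x))$, which is~\eqref{eq:commuteProxProj}. I would write this out once for $l=1$ (data term, Proposition~\ref{Theo:Prox_quad}) and once for a representative $\operatorname{TV}$ summand (say $w=b_2$, Theorem~\ref{lem:proxy_b1}), the other cases being identical.
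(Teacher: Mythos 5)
Your plan is in substance the same as the paper's proof: restrict to the block of pixels on which each $J_l$ acts, use $x\in\mathcal S(f,\delta)$ together with $d_\infty(f)<\frac{\pi}{8}$ to bound the pairwise distances of the contributing values (below $\frac{3\pi}{8}$ for neighbours, below $\frac{\pi}{2}$ within the triples and $2\times 2$ blocks), and then identify the cyclic proximal mapping with $\exp_q$ of the Euclidean one via the lifting of Lemma~\ref{lem:discreteCovering} and the explicit formulas of Proposition~\ref{Theo:Prox_quad} and Theorem~\ref{lem:proxy_b1}; the paper's own proof is exactly this, stated tersely. One caveat about your closing step: the auxiliary inequality $\mathcal E(\cdot;x,w)\ge E(\cdot;\tilde x,w)$, justified by ``cyclic distance never exceeds absolute difference'', is reversed — that fact yields $\mathcal E(z)\le E(\tilde z;\tilde x,w)$ for any lift $\tilde z$, which is the wrong direction for excluding competitors outside the lifted patch; excluding them really requires comparing the $2\pi$-shifted Euclidean functionals as in Corollary~\ref{diff_data}, i.e.\ the argument inside the proof of Theorem~\ref{lem:proxy_b1}. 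Fortunately this whole ``the minimizer cannot escape'' discussion is unnecessary: Theorem~\ref{lem:proxy_b1} and Proposition~\ref{Theo:Prox_quad} already describe the \emph{global} minimizer of the cyclic problem in closed form (the wrapped Euclidean formula with $(\langle x,w\rangle)_{2\pi}$ in place of $\langle \tilde x,w\rangle$), so once your distance bounds give $\lvert\Delta(\tilde x;w)\rvert<\pi$, hence $\Delta(\tilde x;w)=(\langle x,w\rangle)_{2\pi}$, single-valuedness, and likewise $\lvert\tilde x_{i,j}-\tilde f_{i,j}\rvert=d(x_{i,j},f_{i,j})$ for the data term (Lemma~\ref{lem:ConvLem2}), the two formulas coincide under $\exp_q$ and the proof is complete.
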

\begin{proof}
The function $J_1$ is based on the distance to the data $f$.
Since $x \in \mathcal S(f,\delta)$, we have
$d(x_{i,j},f_{i,j}) \leq \frac{\pi}{8}$ for all $(i,j) \in \mathbb I$.
The components of the
proximal mapping $\prox_{\lambda J_1}$ are given by
Proposition~\ref{Theo:Prox_quad} from which we
conclude~\eqref{eq:commuteProxProj} for $l=1.$

The proximal mappings of $J_l$, $l=2,\ldots,15$, are given via proximal mappings of
the first and second order cyclic differences.  We consider the first
order difference $d_1=d$.
By the triangle inequality, we
have~$d(x_{i,j},x_{i,j+1}) \leq \frac{3\pi}{8}$ as well
as~$d(x_{i,j},x_{i+1,j}) \leq \frac{3\pi}{8}$.
By the explicit form of the proximal
mapping in Theorem~\ref{lem:proxy_b1} we obtain~\eqref{eq:commuteProxProj} 
for~$J_l$, $l=2,\ldots,5$.

Next we consider the horizontal and vertical second order
differences~$d_2(x_{i-1,j},x_{i,j},x_{i+1,j})$
and~$d_2(x_{i,j-1},x_{i,j},x_{i,j+1})$. We have
that~$d(x_{i,j-1},x_{i,j}) < \frac{3\pi}{8}$,
$d(x_{i,j},x_{i,j+1})<\frac{3\pi}{8}$ as well
as~$d(x_{i,j-1},x_{i,j}) < \frac{\pi}{2}$. Hence all contributing values
of~\(x\) lie on a quarter of the circle. Applying the proximal mapping in
Theorem~\ref{lem:proxy_b1} the resulting data lie on one half of the circle.
An analogous statement holds true for the horizontal part.
Hence the proximal mappings of the ordinary second differences agree with
the cyclic version (under identification via $\exp_q$). This
implies~\eqref{eq:commuteProxProj} for $J_l$, $l=6,\ldots,11$.
 
Finally, we consider the mixed second order
differences~$d_{1,1}(x_{i,j},x_{i+1,j},x_{i,j+1},x_{i+1,j+1})$.
As above, we have for neighboring data items that the distance is smaller
than~$\frac{3\pi}{8}$.
For all four contributing values of \(x\) we have that the
pairwise distance is smaller by $\frac{\pi}{2}$. Thus again they lie on a
quarter of the circle. Hence, the proximal mapping for the ordinary mixed second
differences agree with the cyclic version (under identification via $\exp_q$). This
implies~\eqref{eq:commuteProxProj} for $J_l$, $l=12,\ldots,15$.
\end{proof}

We note that Lemma~\ref{lem:ConvLem5} does not guarantee
that~$\prox_{\lambda J_l}(x)$ remains in $\mathcal S(f,\delta)$. Therefore it does not
allow for an iterated application. In the following main theorem we combine the preceding
lemmas to establish this property.
%
\begin{theorem} \label{thm:Convergence}
Let $f\in(\mathbb S^1)^{N\times M}$ with
$d_{\infty}(f) < \frac{\pi}{8}$.
Let
$\lambda := \{\lambda_k\}_k$ fulfill property~\eqref{eq:CPPAlambda}
and
\begin{align} \label{eq:condTheorem}
	 \sqrt{ \varepsilon^2 + 2 \lVert \lambda\rVert_2^2 L^2 c (c+1)}
	+ 2 \lVert\lambda\rVert_\infty cL  < \frac{\pi}{16},
\end{align}
for some  \(\varepsilon>0\), where $c = 15$ and $L=4$.
Further, assume that the parameters~$\alpha,\beta,\gamma$
of the functional $J$ in~\eqref{task_2} and $\varepsilon$ satisfy~\eqref{eq:EstAlpaBeta}.
Then the sequence $\{x^{(k)} \}_k$ generated by the
CPPA in Algorithm~\ref{alg:CPPA} converges to a global minimizer of $J$. 
\end{theorem}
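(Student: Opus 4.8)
The strategy is to transfer the whole problem to the real line via the lifting from Lemma~\ref{lem:discreteCovering}, run the real-valued convergence result Theorem~\ref{thm:bacak} there, and push the limit back down with $\exp_q$. The first step is to fix a base point $q \in \mathbb S^1$ not antipodal to $f_{1,1}$, fix $\tilde f_{1,1}$ with $\exp_q(\tilde f_{1,1}) = f_{1,1}$, and let $\tilde f$ be the lifting of $f$; set $\delta := \frac{\pi}{8}$. Since $\varepsilon < \frac{\pi}{16} < \delta$, Lemma~\ref{lem:ConvLem3} applies, so $J$ and $\tilde J$ have (unique, by Remark~\ref{rem:Conv} and convexity of $\tilde J$) minimizers, located in $\mathcal S(f,\delta)$ and $\widetilde{\mathcal S}(f,\delta)$ respectively, and related by $\exp_q$. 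Call $y^\ast$ the minimizer of $\tilde J$ and $x^\ast := \exp_q(y^\ast)$ the corresponding minimizer of $J$.

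Next I would show by induction on the CPPA steps that every iterate $x^{(k+\frac{l}{c})}$ of Algorithm~\ref{alg:CPPA} applied to $J$ stays inside $\mathcal S(f,\delta)$, and that its lifting $\widetilde{x^{(k+\frac{l}{c})}}$ coincides with the iterate $\tilde x^{(k+\frac{l}{c})}$ of the CPPA applied to $\tilde J$ started at $\tilde f$. The base case holds since $x^{(0)} = f$ lifts to $\tilde f = \tilde x^{(0)}$. For the inductive step, Lemma~\ref{lem:ConvLem5} gives that one proximal step commutes with $\exp_q$ — i.e.\ $\prox_{\lambda_k J_l}(x^{(k+\frac{l-1}{c})}) = \exp_q(\prox_{\lambda_k \tilde J_l}(\tilde x^{(k+\frac{l-1}{c})}))$ — provided the current point lies in $\mathcal S(f,\delta)$. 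To close the induction I must verify the point never leaves $\mathcal S(f,\delta)$: here I invoke Lemma~\ref{lem:ConvLem4}, which bounds $\lVert \tilde x^{(k+\frac{l}{c})} - y^\ast\rVert_2 \le R$ with $R$ exactly the left-hand side of~\eqref{eq:condTheorem} (after substituting $\lVert f - x^\ast\rVert_2 \le \varepsilon$ from Lemma~\ref{lem:ConvLem1}/Remark~\ref{rem:TildeJ}); condition~\eqref{eq:condTheorem} then forces $R < \frac{\pi}{16}$, hence $\lVert \tilde x^{(k+\frac{l}{c})} - \tilde f\rVert_\infty \le \lVert \tilde x^{(k+\frac{l}{c})} - y^\ast\rVert_\infty + \lVert y^\ast - \tilde f\rVert_\infty < \frac{\pi}{16} + \frac{\pi}{16} = \frac{\pi}{8} = \delta$, so the lifted iterate lies in $\widetilde{\mathcal S}(f,\delta)$ and its projection lies in $\mathcal S(f,\delta)$. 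The hypothesis $\lVert \tilde f - \tilde x^{(k+\frac{l}{c})}\rVert_\infty \le \pi$ needed by Lemma~\ref{lem:ConvLem4} is automatic for liftings.

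With the iterates confined to $\mathcal S(f,\delta)$, the $\mathbb S^1$-valued CPPA run is, step by step, the image under $\exp_q$ of the real-valued CPPA run for $\tilde J$ on $\mathbb R^{N\times M}$. The real-valued CPPA converges to the minimizer $y^\ast$ of $\tilde J$ by Theorem~\ref{thm:bacak}: its hypotheses are met because the summands $\tilde J_l$ are proper, closed, convex, $\tilde J$ has a global minimizer, and the Lipschitz-type bound $E_l(x^{(k)}) - E_l(x^{(k+\frac{l}{c})}) \le L\lVert x^{(k)} - x^{(k+\frac{l}{c})}\rVert_2$ holds with $L = 4$ (the Lipschitz constants of $\widetilde{\operatorname{TV}}_1, \widetilde{\operatorname{TV}}_2^{\mathrm{hv}}, \widetilde{\operatorname{TV}}_2^{\mathrm{d}}$ are $1,4,4$, and the data term is Lipschitz with constant $\pi$ on the relevant ball, as computed in the proof of Lemma~\ref{lem:ConvLem4}). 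Since $\exp_q$ is continuous, $x^{(k)} = \exp_q(\tilde x^{(k)}) \to \exp_q(y^\ast) = x^\ast$, which by Lemma~\ref{lem:ConvLem3} is a global minimizer of $J$ on all of $(\mathbb S^1)^{N\times M}$.

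\textbf{Main obstacle.} The delicate point is the invariance argument: showing the iterates never escape $\mathcal S(f,\delta)$, so that Lemma~\ref{lem:ConvLem5} may be applied repeatedly. This is exactly where the quantitative smallness conditions — $d_\infty(f) < \frac{\pi}{8}$, the parameter bound~\eqref{eq:EstAlpaBeta}, and the step-size/radius bound~\eqref{eq:condTheorem} — all have to conspire, via the radius estimate $R$ of Lemma~\ref{lem:ConvLem4}, to keep both the lifted minimizer $y^\ast$ and every lifted iterate within distance $\frac{\pi}{16}$ of $\tilde f$ in the sup-norm. The rest is bookkeeping and invoking the already-established lemmas.
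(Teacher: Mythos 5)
Your proposal is correct and follows essentially the same route as the paper: lift $f$ via Lemma~\ref{lem:discreteCovering}, locate the real minimizer $y^\ast$ within $\varepsilon$ of $\tilde f$ by Remark~\ref{rem:TildeJ}, use Lemma~\ref{lem:ConvLem4} with condition~\eqref{eq:condTheorem} to confine the lifted iterates to $\widetilde{\mathcal S}(\tilde f,\tfrac{\pi}{8})$, identify the $\mathbb S^1$-iterates with $\exp_q$ of the real iterates by induction via Lemma~\ref{lem:ConvLem5}, and conclude with Theorem~\ref{thm:bacak} and Lemma~\ref{lem:ConvLem3}. The only cosmetic issue is a notational slip ($\lVert f - x^\ast\rVert_2 \le \varepsilon$ should read $\lVert \tilde f - y^\ast\rVert_2 \le \varepsilon$), which does not affect the argument.
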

%
\begin{proof} 	
Let $\tilde{f}$ be the lifting of of $f$ with respect to a base point $q$ not antipodal to $f_{1,1}$
and fixed $\tilde f_{1,1}$ with $\exp_q (\tilde{f}_{1,1}) = f_{1,1}$.
Further, let $\tilde J$ denote the real analog of $J$.
By Lemma~\ref{lem:ConvLem2} we have
$\operatorname{TV}_1(f) = \operatorname{\widetilde{TV}}_1(\tilde{f})$
and
$\operatorname{TV}_2^\bullet(f) = \operatorname{\widetilde{TV}}_2^\bullet(\tilde{f})$ for
$\bullet \in \{{\rm hv}, \rm{d} \}$
such that 
\eqref{eq:EstAlpaBeta} is also fulfilled for the real-valued setting.
Then we can apply
Remark~\ref{lem:ConvLem1} and conclude that the minimizer~$y^\ast$
of~$\tilde{J}$ fulfills
$\lVert y^\ast - \tilde{f}\rVert_2 \leq \varepsilon < \frac{\pi}{16}$.
By~\eqref{eq:estCPPAiterates} we obtain
\begin{align} \label{eq:Nr1}
	R &= \sqrt{ \lVert y^\ast - \tilde{f}\rVert_2^2
			+ 2 \lVert \lambda\rVert_2^2 L^2 c (c+1)}
		+ 2 \lVert\lambda\rVert_\infty cL \\
	&\leq \sqrt{ \varepsilon^2 + 2 \lVert \lambda\rVert_2^2 L^2 c (c+1)}
		+ 2 \lVert\lambda\rVert_\infty cL  < \frac{\pi}{16}.
\end{align}
By Lemma~\ref{lem:ConvLem4} the iterates $y^{(k+\frac{l}{c})}$ of the
real-valued CPPA fulfill
\[
  \lVert y^{(k+\frac{l}{c})} - y^\ast\rVert_2  \le R < \frac{\pi}{16}.
\]
Hence~$\lVert y^{(k+\frac{l}{c})} - \tilde{f}\rVert_{\infty} < \frac{\pi}{8}$
which means that all iterates $y^{(k+\frac{l}{c})}$ stay within $\tilde{\mathcal S}(\tilde f,\frac{\pi}{8})$.

Next, we consider the sequence $\{ x^{(l+\frac{k}{c})} \}$ of the CPPA for the $\mathbb S^1$-valued data $f$.
We use induction to verify $x^{(k+\frac{l}{c})} = \exp_q(y^{(k+\frac{l}{c})})$.
By definition we have $x^{(0)} = f = \exp_q (\tilde f) = \exp_q(y^{(0)})$.
Assume that $x^{(k+\frac{l-1}{c})} = \exp_q(y^{(k+\frac{l-1}{c})})$.
By bijectivity of the lifting, cf. Lemma~\ref{lem:ConvLem2},
and since $y^{(k+\frac{l-1}{c})}\in\tilde{\mathcal S}(\tilde f,\delta)$,
we conclude $x^{(k+\frac{l-1}{c})}\in \mathcal S(f,\delta)$.
By Lemma~\ref{lem:ConvLem5} we obtain
\[
  \exp_q(y^{(k+\frac{l}{c})})
 = \exp_q\bigl(\prox_{\lambda_k\tilde J_{l}}(y^{(k+\frac{l-1}{c})})\bigr)
 = \prox_{\lambda_k J_{l}}( x^{(k+\frac{l-1}{c})}) = x^{(k+\frac{l}{c})}.
\]
By the same argument as above we have again $x^{(k+\frac{l}{c})} \in \mathcal S(f,\delta)$.

Finally, we know by Theorem~\ref{thm:bacak} that
\[
 x^{(k)} = \exp_q(y^{(k)}) \rightarrow \exp_q(y^\ast) \quad {\rm as} \quad k \rightarrow \infty
\]
and by Lemma~\ref{lem:ConvLem3} that $x^\ast := \exp_q(y^\ast)$ is a global minimizer of $J$.
This completes the proof.
\end{proof}
%
\section{Numerical results} \label{sec:numerics}
%
For the numerical computations of the following examples, the algorithms
presented in Section~\ref{sec:cpp} were implemented in \textsc{MatLab}. 
The computations were performed on a MacBook Pro with an Intel Core i5,
2.6\,Ghz and 8\,GB of RAM using \textsc{MatLab} 2013, Version 2013a (8.1.0.604)
on Mac OS 10.9.2.
%
\subsection{Signal denoising of synthetic data}
\begin{figure}\centering
	\begin{subfigure}{.48\textwidth}
	\centering
		\includegraphics{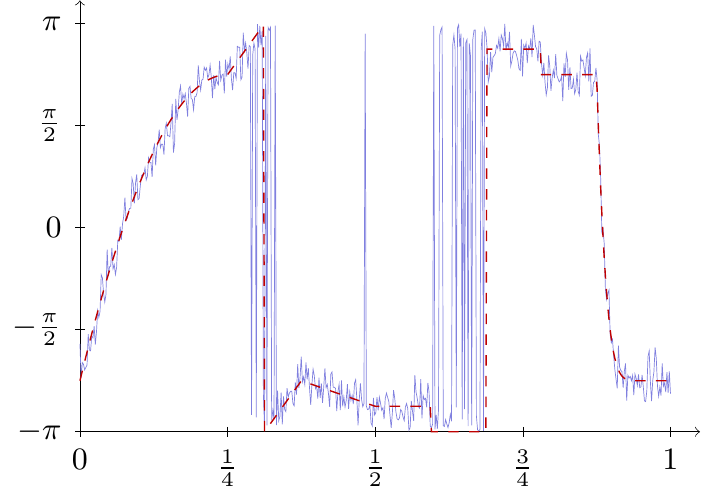}
		\caption{
		\(f_\text{o}\) and \(f_\text{n}\).
		}
		\label{subfig:noisy}
	\end{subfigure}
%
%
	\begin{subfigure}{.48\textwidth}\centering
			\includegraphics{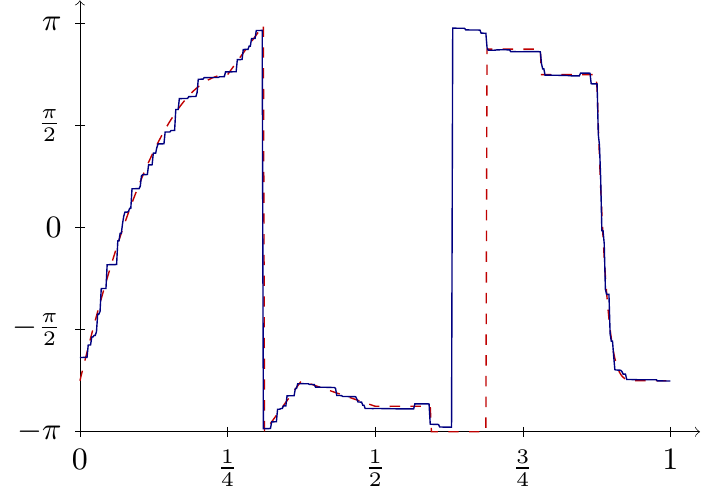}
		\caption{
\(\text{TV}_1\), \(e(f_\text{o},f_\text{r}) \approx 6.06\times10^{-3}\)\!.}
		\label{subfig:TV}
	\end{subfigure}
 	\begin{subfigure}{.48\textwidth}\centering
		\includegraphics{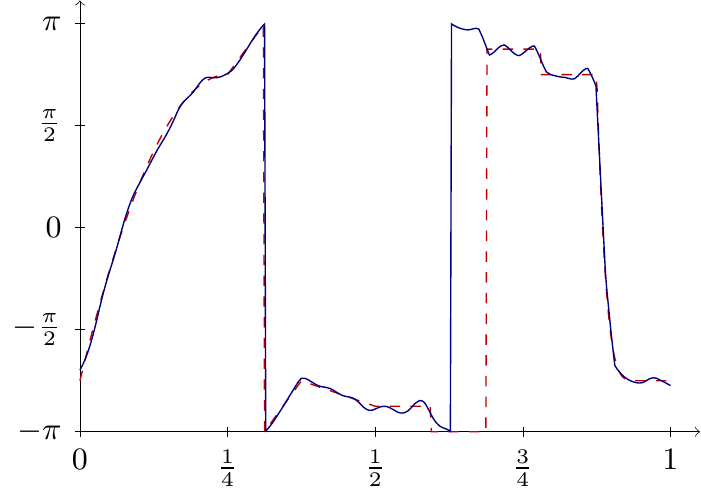}
			\caption{\(\text{TV}_2\),
			\(e(f_\text{o},f_\text{r}) \approx 4.34\times10^{-3}\)\!.}
		\label{subfig:TV2}
	\end{subfigure}
 	\begin{subfigure}{.48\textwidth}\centering
			\includegraphics{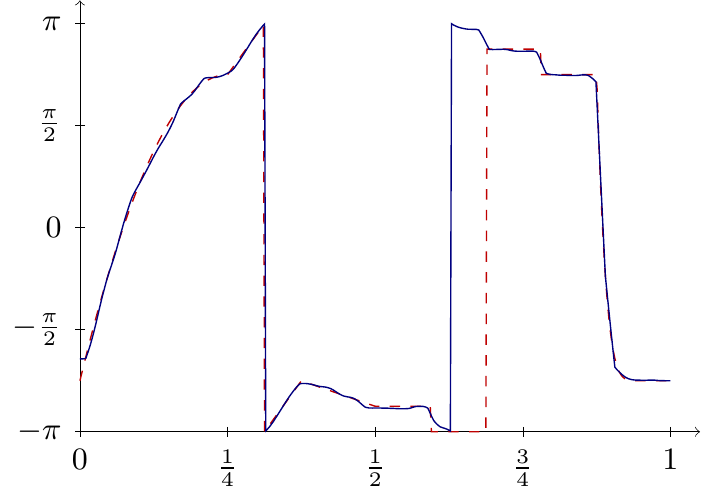}
			\caption{
\(\text{TV}_1\)\&\( \text{TV}_2 \),
			, \(e(f_\text{o},f_\text{r}) \approx 3.53\times10^{-3}\)\!.}
		\label{subfig:TV12}
	\end{subfigure}	
	\caption[]{Denoising of an one-dimensional signal by minimizing~\eqref{task_1} with CPPA. %
	\subref{subfig:noisy} 
	Original signal $f_\text{o}$ (dashed red) and disturbed signal by wrapped %
	Gaussian noise $f_\text{n}$ (solid black). %
	\subref{subfig:TV}---\subref{subfig:TV12} 
	Reconstructed signals $f_\text{r}$ using~\subref{subfig:TV}~
	only the $\text{TV}_1$ regularizer ($\alpha=\frac{3}{4}$), %
	\subref{subfig:TV2}~
	only the $\text{TV}_2$ regularizer ($\beta=\frac{3}{2}$), and %
	\subref{subfig:TV12}~
	both of them ($ \alpha=\frac{1}{2}$, $\beta=1$).
	While\,\subref{subfig:TV}~
	suffers from the staircasing effect,\,\subref{subfig:TV2}~
	shows weak results at constant areas. The combination of both regularizers %
	in\,\subref{subfig:TV12}~
	yields the best image.}\label{fig:Example1D}
\end{figure}
The first example of an artificial one-dimensional signal demonstrates the
effect of different models containing absolute cyclic
first order differences, second order differences or both combined.
The function~\(f: [0,1] \to [-\pi,\pi)\) given by
\[
	f(x) := \begin{cases}
		-24\pi x^2 + \frac{3}{4}\pi & \mbox{ for \(0\leq x\leq \frac{1}{4}\),}\\
		4\pi x - \frac{\pi}{4} &\mbox{ for \( \frac{1}{4}< x \leq \frac{3}{8}\),}\\
		\bigl(-\pi x - \frac{3}{8}\bigr)_{2\pi} &\mbox{ for \( \frac{3}{8}< x \leq \frac{1}{2}\),}\\
		\bigl(-\frac{j+7}{8}\pi\bigr)_{2\pi} & \mbox{ for \(\frac{3j+16}{32} < x \leq \frac{3j + 19}{32}\), \(j=0,1,2,3\),}\\
		\frac{3}{2}\pi\exp\bigl(-\frac{35}{7} -\frac{1}{1-x}\bigr) - \frac{3}{4}\pi & \mbox{ for \(\frac{7}{8} < x \leq 1\),}
	\end{cases}
\]
is sampled equidistantly to obtain the original signal
\(f_\text{o} = \Bigl(f\bigl(\frac{i-1}{N-1}\bigr)\Bigr)_{i=1}^N\)
at \(N=500\)
samples.
This function is distorted by wrapped Gaussian noise \(\eta\) of
standard deviation \(\sigma = \frac15\) to get
\( f_\text{n} := \bigl(f_\text{o} + (\eta)_{2\pi}\bigr)_{2\pi} = (f_\text{o} + \eta)_{2\pi}\),
see also Remark~\ref{lem:mod_pi}.
The functions $f_\text{o}$ and $f_\text{n}$ are depicted in Figure~\ref{subfig:noisy}.
Note the following effects due to the cyclic data representation on $[-\pi,\pi)$:
The linear increase on \(\bigl[\frac{1}{4},\frac{3}{8}\bigr]\) of $f$ is continuous and
the change from \(\pi\) to \(-\pi\) at $\frac{5}{16}$
is just due to the chosen representation system.
Similarly the two constant parts with the values \(-\pi\)
and~\(\frac{7}{8}\pi\) differ only by a jump
size of~\(-\frac{\pi}{8}\). For the noise around these two areas, we have the same
situation.

We apply Algorithm~\ref{alg:CPPA} with different model parameters $\alpha$ and $\beta$
to \(f_\text{n}\) which yields the restored signals \(f_\text{r}\).
The restoration error is measured by
the `cyclic' mean squared error (cMSE) with respect to the arc length distance
\[
	e(f_\text{o},f_\text{r}) := \frac{1}{N}\sum_{i=1}^M d(f_{\text{o},i},f_{\text{r},i})^2.
\]
We use $\lambda_0 = \pi$ and \(k=4000\) iterations as stopping criterion.
For any choice of parameters \(\alpha, \beta\) the computation time is about \( 6 \) seconds.

The result \(f_\text{r}\) in Figure~\ref{subfig:TV} is obtained using
only the \(\text{TV}_1\) regularization (\(\alpha=\frac{3}{4},\beta=0\)).
The restoration of constant areas is
favored by this regularization term, but linear, quadratic and
exponential parts suffer from the  well-known `staircasing' effect.
Utilizing only the \(\text{TV}_2\) regularization (\(\alpha=0,\beta = \frac23\)) ,
cf. Figure~\ref{subfig:TV2}, the restored function becomes worse in flat areas,
but shows a better quality in the linear parts.
By combining the regularization terms (\(\alpha=\frac{1}{2}\), \(\beta = 1\)) as
illustrated in Figure~\ref{subfig:TV12}
both the linear and the constant parts are reconstructed quite well and
the cMSE is smaller than for the other choices of parameters.
Note that $\alpha$ and $\beta$ were chosen in \(\frac{1}{4}\mathbb N\) with
respect to an optimal cMSE.
%
\subsection{Image denoising of InSAR data}
The complex-valued synthetic aperture radar (SAR) data is
obtained emitting specific radar signals at equidistant points and measuring
the amplitude and phase of their reflections by the earth's surface. The
amplitude provides information about the reflectivity of the surface. The phase
encodes both the change of the elevation of the surface's elements within the
measured area and their reflection properties and is therefore rather
arbitrary. When taking two SAR images of the target area at the  same time but
from different angles or locations. The phase difference of these images encodes
the elevation, but it is restricted to one wavelength and also includes noise. The
result is the so called interferometric synthetic aperture radar (InSAR) data
and consists of the `wrapped phase' or the `principal phase', a value in
\([-\pi,\pi)\) representing the surface elevation. For more details see,
e.g.,~\cite{BRF00,MF98}.

After a suitable preregistration the same approach can be applied to two images
from the same area taken at different points in time to measure surface
displacements, e.g., before and after an earthquake or the movement of glaciers.

The main challenge in order to unwrap the phase is the presence of noise.
Ideally, if the surface would be smooth enough and no noise would be present,
unwrapping is uniquely determined, i.e., differences between two pixels larger
than \(\pi\) are regarded as a wrapping result and hence become unwrapped.

There are several algorithms to unwrap, even combining the denoising and the
unwrapping, see for example~\cite{BKAE08,BV07}. For denoising, Deledalle et
al.~\cite{DDT11} use both SAR images and apply a non-local means algorithm
jointly to their reflection, the interferometric phase and
the coherence.

%
\begin{figure}[tbp]\centering
		\captionsetup[sub]{skip=0pt}
		\setlength{\abovecaptionskip}{-3pt}
		\vspace{-\baselineskip}
		\begin{subfigure}{.48\textwidth}\centering
			\includegraphics{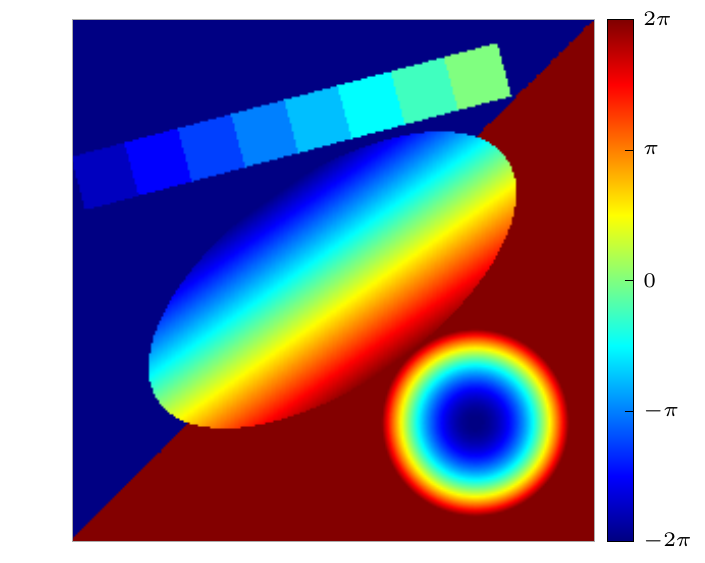}
			\vspace{-.5\baselineskip}
			\caption{Original data.}\label{subfig:2Dorig}
		\end{subfigure}
		\begin{subfigure}{.48\textwidth}\centering
			\includegraphics{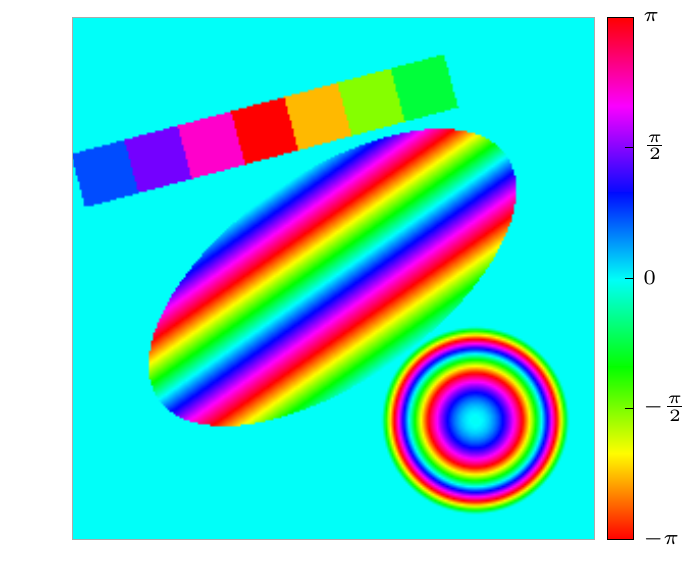}
			\vspace{-.5\baselineskip}
			\caption{Wrapped data $f_{\text{o}} $.}\label{subfig:2Dwrapped}
		\end{subfigure}
		\begin{subfigure}{.48\textwidth}\centering
			\includegraphics{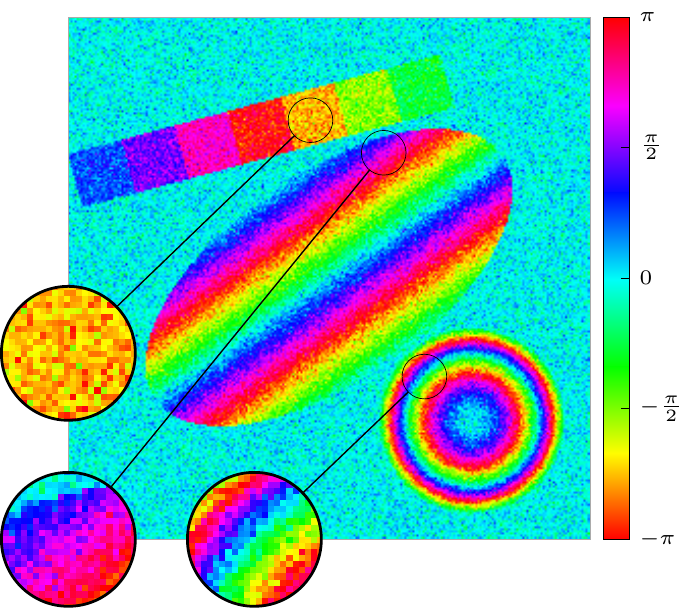}
			\caption{Noisy data $f_{\text n}$.
			}\label{subfig:2Dnoisy}
		\end{subfigure}
		\begin{subfigure}{.48\textwidth}\centering
			\includegraphics{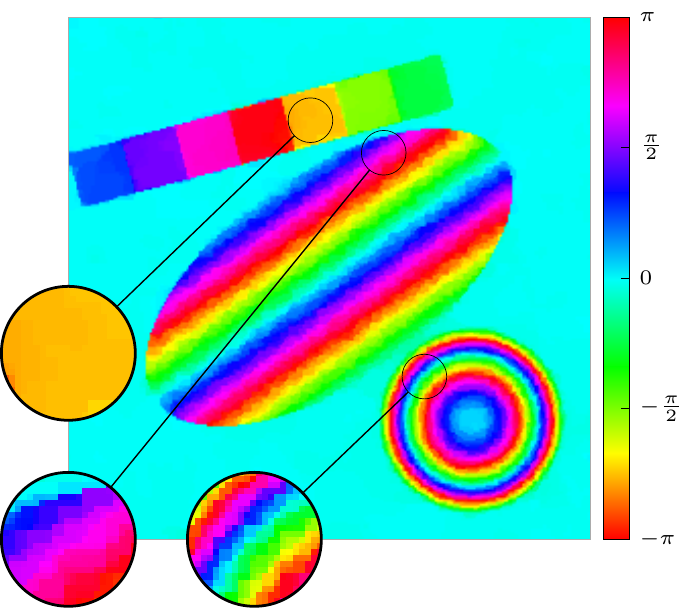}
			\caption{
			$\text{TV}_1$,
			$e(f_{\text{o}},f_\text{r}) = 
			7.09\,\times10^{-3}$.}\label{subfig:2DTV1}
		\end{subfigure}
		\begin{subfigure}{.48\textwidth}\centering
			\includegraphics{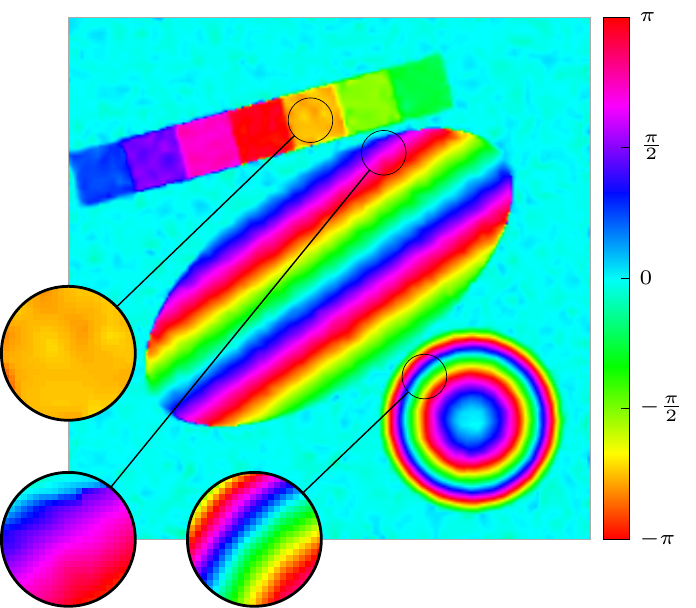}
			\caption{$\text{TV}^{\mathrm{hv}}_2$,
			$e(f_{\text{o}},f_\text{r}) = 
			6.70\,\times\,10^{-3}$.}\label{subfig:2DTV2}
		\end{subfigure}
		\begin{subfigure}{.48\textwidth}\centering
				\includegraphics{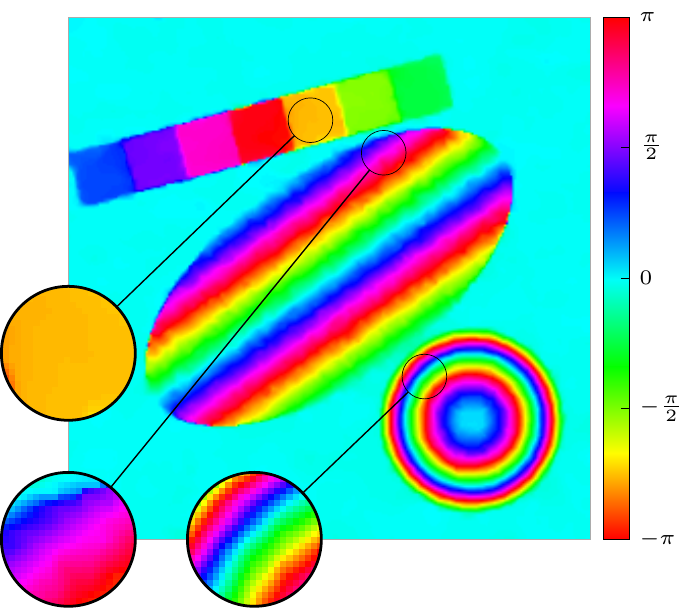}
			\caption{$\text{TV}_1$ \& $\text{TV}^{\mathrm{hv}}_2$,
			$e(f_{\text{o}},f_\text{r}) = 
			5.37\,\times\,10^{-3}$.}\label{subfig:2DTV12}
		\end{subfigure}
		\vspace{\baselineskip}
		\caption[]{Denoising of two-dimensional artificial data by
		minimizing~\eqref{task_2} with CPPA.
		\subref{subfig:2Dorig}~
		Artificial surface,\,\subref{subfig:2Dwrapped}~
		its wrapped variant, and\,\subref{subfig:2Dnoisy}~
		wrapped image corrupted by wrapped Gaussian noise.
		\subref{subfig:2DTV1}--\subref{subfig:2DTV12}~
		Reconstructed images $f_\text{r}$ using\,\subref{subfig:2DTV1}~
		only the $\operatorname{TV}_1$ regularizer ($\alpha = (\frac{3}{8}, \frac{1}{4})$),
		\subref{subfig:2DTV2}~
		only  the $\operatorname{TV}^{\text{hv}}_2\,\&\;\operatorname{TV}^{\text{d}}_2$ regularizer ($\beta = (\frac{1}{8},\frac{1}{8})$, $\gamma=\frac{1}{8}$),
		and\,\subref{subfig:2DTV12}~
		both of them ($\alpha=(\frac{1}{4},\frac{1}{8})$, $\beta= (\frac{1}{8},\frac{1}{8})$, $\gamma=0$).
}
\vspace{-16pt}
\end{figure}
\paragraph{Application to synthetic data.}
In order to get a better understanding in the two-dimen\-sio\-nal case, let us
first take a look at a synthetic surface given on~\([0,1]^2\) with the profile
shown in Figure~\ref{subfig:2Dorig}. This surface consists of two plates of
height~\(\pm2\pi\) divided at the diagonal, a set of stairs in the upper left
corner in direction~\(\frac{\pi}{3}\), a linear increasing area
connecting both plateaus having the shape of an ellipse with major
axis at the angle~\(\frac{\pi}{6}\), and a half ellipsoid forming a dent in the
lower right of the image with circular diameter of size~\(\frac{9}{25}\) and
depth~\(4\pi\).
The initial data is given by sampling the described surface at~\(M=N=256\)
sampling points.
The usual InSAR measurement would ideally result in data as given in
Figure~\ref{subfig:2Dwrapped}, i.e., the data is wrapped with respect to
\(2\pi\).
In the figure the resulting ideal phase is represented using the hue component
of the HSV color space.
Again, the data is perturbed by wrapped Gaussian noise,
standard deviation~\(\sigma = 0.3\), see Figure~\ref{subfig:2Dnoisy}.

For an application of Algorithm~\ref{alg:CPPA} to the minimization problem~\eqref{task_2},
we have to fix five parameters \(\alpha_1,\alpha_2,\beta_1,\beta_2,\gamma\)
which were chosen on~\( \frac{1}{8}\mathbb N \) such that they minimize the cMSE.
Using only the cyclic first order differences with \(\alpha=\frac{1}{8}(3,2)\),
see Figure~\ref{subfig:2DTV1}, the reconstructed image $f_{\text{r}}$ reproduces
the piecewise constant parts of the stairs in the upper left part and the
background, but introduces a staircasing in both linear increasing areas inside
the ellipse and in the half ellipsoid.
This is highlighted in the three magnifications in
Figure~\ref{subfig:2DTV1}.
Applying only cyclic second order differences with
\(\beta_1=\beta_2=\gamma=\frac{1}{8}\) manages to reconstruct the
linear increasing part and the circular structure of the ellipsoid, but
compared to the first case it even increases the cMSE due to the approximation
of the stairs and the background, see especially the magnification of the stairs
in Figure~\ref{subfig:2DTV2}.
Combining first and second order cyclic differences by
setting~\(\alpha_1=\alpha_2=\frac{1}{8}(2,1)\)
and~\(\beta_1=\beta_2=\frac{1}{8}\), \(\gamma=0\), these disadvantages can be
reduced, cf. Figure~\ref{subfig:2DTV12}. Note especially the three
magnified regions and the cMSE.
%
\begin{figure}[tbp]\centering
	\begin{subfigure}{.48\textwidth}\centering
			\includegraphics{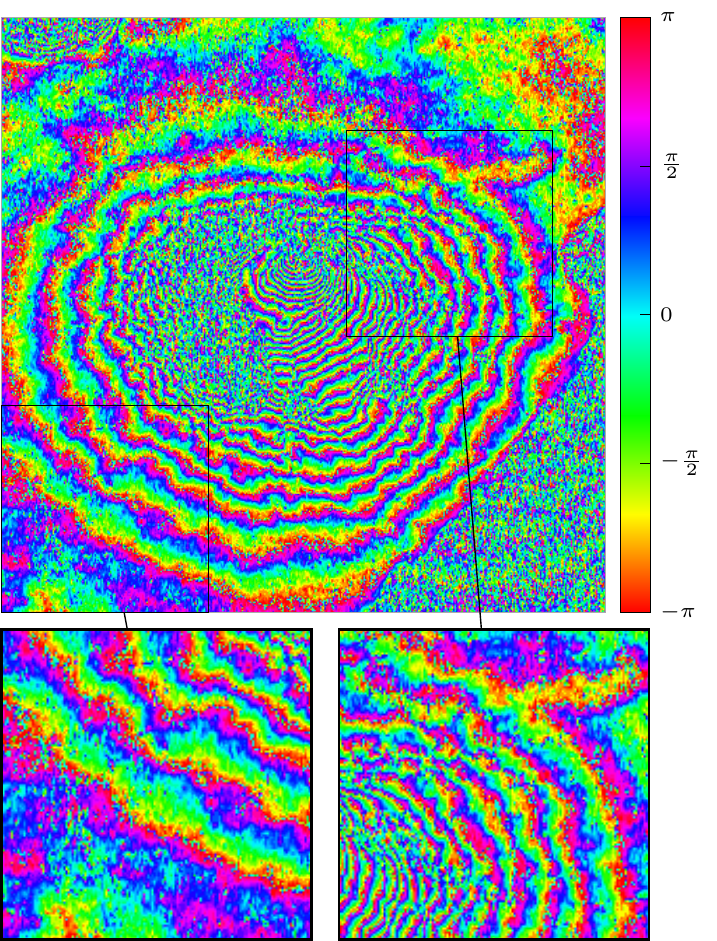}
		\caption{Complete Vesuvuis data.}\label{subfig:vesuv-orig}
	\end{subfigure}
	\begin{subfigure}{.48\textwidth}\centering
			\includegraphics{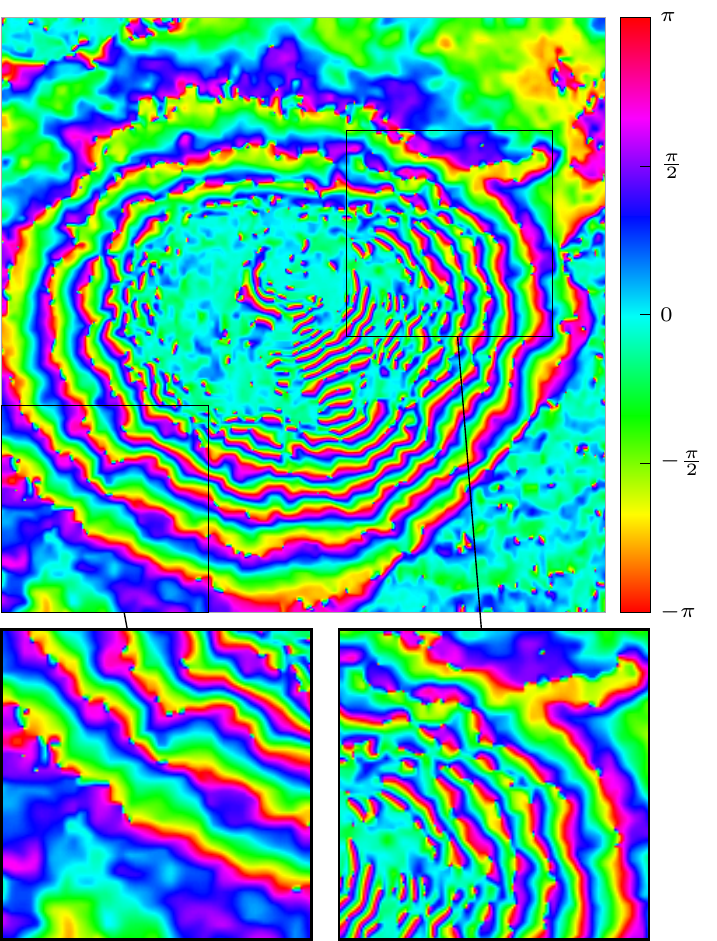}
		\caption{Denoised Vesuvius data.}\label{subfig:vesuv-denoised}
	\end{subfigure}
	\caption[]{\subref{subfig:vesuv-orig}~
Noisy InSAR data set of the Vesuvius taken by the ERS-1 satellite~\cite{RPG97}.\,\subref{subfig:vesuv-denoised}~
Denoised image by minimizing~\eqref{task_2} with CPPA ($\alpha= \bigl(\frac{1}{4},\frac{1}{4}\bigr)$, $\beta=\bigl(\frac{3}{4},\frac{3}{4}\bigr)$ and $\gamma=\frac{3}{4}$).}
\label{fig:Vesuvius}
\end{figure}
\paragraph{Application to real-world data.}
Next we examine a real-world example. The data from
\cite{RPG97} is a set of InSAR data recorded in 1991 by the ERS-1 satellite
capturing topographical information from the Mount Vesuvius. The data is
available online\footnote{at %
\url{https://earth.esa.int/workshops/ers97/program-details/speeches/rocca-et-al/}%
}
and a part of it was also used as an example in~\cite{WDS2013} for TV based
denoising of manifold-valued data.
In Figure~\ref{fig:Vesuvius} the phase is
represented by the hue component of the HSV color space.
We apply Algorithm~\ref{alg:CPPA} to the image of size~$426 \times 432$, cf.
Figure~\ref{subfig:vesuv-orig}, with~
\(\alpha_1=\alpha_2=\frac{1}{4}\) and~\(\beta_1=\beta_2=\gamma=\frac{3}{4}\).
This reduces the noise while keeping all significant plateaus, ascents and
descents, cf. Figure~\ref{subfig:vesuv-denoised}. The left zoom illustrates how
the plateau in the bottom left of the data is smoothened
but kept in its main elevation shown in blue. In the zoom on the right all
major parts except the noise are kept. We notice just a little smoothening due
to the linearization introduced by~\(\operatorname{TV}_2\). In the  bottom left
of this detail some of the fringes are eliminated, and a small plateau is build
instead, shown in cyan. The computation time for the
whole image  using \(k=600\) iterations as
stopping criterion was 86.6 sec and 11.1 sec for each of the details of size
\(150\times 150\).
\section{Conclusions} \label{sec:conclusions}
%
In this paper we considered functionals having regularizers with
second order absolute cyclic differences for $\mathbb S^1$-valued data.
Their definition required a proper notion of higher order differences of cyclic
data generalizing the corresponding concept in Euclidian spaces.
We derived a CPPA for the minimization of
our functionals and gave the explicit expressions for the appearing proximal
mappings.
We proved convergence of the CPPA under certain conditions.
To the best of our knowledge this is the first algorithm dealing with higher
order TV-type minimization for $\mathbb S^1$-valued data.
We demonstrated the denoising capabilities of our model on synthetic as well as
on real-world data.

Future work includes the application of our higher order methods
for cyclic data to other imaging tasks such as
segmentation, inpainting or deblurring.
For deblurring, the usually underlying linear convolution kernel
has to be replaced by a nonlinear construction based on intrinsic (also called Karcher) means.
This leads to the task of solving the new associated inverse problem.

Further, we intend to investigate other couplings of first and second order
derivatives similar to infimal convolutions or GTV for Euclidean data.
Finally, we  want to set up  higher order TV-like methods for more general
manifolds, e.g. higher dimensional spheres.
Here, we do not believe that it is possible to derive explicit expressions for
the involved proximal mappings -- at least not for Riemannian manifolds of
nonzero sectional curvature. Instead, we plan to resort to iterative techniques.
%
%
\paragraph{Acknowledgement.} We acknowledge the financial support by DFG Grant STE571/11-1.

\end{document}